\newtheorem{lemma}{Lemma}[section]
\newtheorem{theorem}[lemma]{Theorem}
\newtheorem{corollary}[lemma]{Corollary}
\newtheorem{proposition}[lemma]{Proposition}
\theoremstyle{definition}
\newtheorem{definition}[lemma]{Definition}
\newtheorem{remark}[lemma]{Remark}
\newenvironment{example}
  {\pushQED{\qed}\examplex}
  {\popQED\endexamplex}
\newtheorem{question}[lemma]{Question}
\newtheorem{algorithm}[lemma]{Algorithm}
\newcommand{\define}[1]{{\bfseries\itshape #1}}
\newcommand{\relphantom}[1]{\mathrel{\phantom{#1}}}
\newcommand{\abs}[1]{\ensuremath{\left| #1 \right|}}
\newcommand{\ideal}[1]{\ensuremath{\left\langle #1 \right\rangle}}
\renewcommand{\gets}{\longleftarrow}
\renewcommand{\geq}{\geqslant}
\renewcommand{\leq}{\leqslant}
\newcommand{\coloneq}{\ensuremath{\mathrel{\mathop :}=}}
\newcommand{\kk}{\ensuremath{\Bbbk}} 
\newcommand{\NN}{\ensuremath{\mathbb{N}}}
\newcommand{\PP}{\ensuremath{\mathbb{P}}}
\newcommand{\ZZ}{\ensuremath{\mathbb{Z}}} 
\newcommand{\bba}{\ensuremath{\bm{a}}}
\newcommand{\bbb}{\ensuremath{\bm{b}}}
\newcommand{\bbc}{\ensuremath{\bm{c}}}
\newcommand{\bbd}{\ensuremath{\bm{d}}}
\newcommand{\bbe}{\ensuremath{\bm{e}}}
\newcommand{\bbm}{\ensuremath{\bm{m}}}
\newcommand{\bbn}{\ensuremath{\bm{n}}}
\newcommand{\bbp}{\ensuremath{\bm{p}}}
\newcommand{\bbq}{\ensuremath{\bm{q}}}
\newcommand{\bbu}{\ensuremath{\bm{u}}}
\newcommand{\bbv}{\ensuremath{\bm{v}}}
\newcommand{\bF}{\ensuremath{{F}}}
\newcommand{\boldzero}{\ensuremath{\bm{0}}}
\newcommand{\boldone}{\ensuremath{\bm{1}}}
\newcommand{\cE}{\ensuremath{\mathcal{E}}}
\newcommand{\cF}{\ensuremath{\mathcal{F}}}
\newcommand{\cK}{\ensuremath{\mathcal{K}}}
\newcommand{\cO}{\ensuremath{\mathcal{O}}}
\newcommand{\cT}{\ensuremath{\mathcal{T}}}
\newcommand{\cHom}{\ensuremath{\!\mathcal{H}\!\!\textit{om}}}
\DeclareMathOperator{\codim}{codim}
\DeclareMathOperator{\coker}{Coker}
\DeclareMathOperator{\HH}{H}
\DeclareMathOperator{\HHH}{\mathbb{H}}
\DeclareMathOperator{\Image}{Im}
\DeclareMathOperator{\Ker}{Ker}
\DeclareMathOperator{\pdim}{pdim}
\DeclareMathOperator{\Pic}{Pic}
\DeclareMathOperator{\pr}{p}
\DeclareMathOperator{\rank}{rank}
\DeclareMathOperator{\reg}{reg}
\DeclareMathOperator{\Spec}{Spec}
\DeclareMathOperator{\variety}{V}
\def\bbordermatrix#1{\begingroup \m@th
  \@tempdima 4.75\p@
  \setbox\z@\vbox{%
    \def\cr{\crcr\noalign{\kern2\p@\global\let\cr\endline}}%
    \ialign{$##$\hfil\kern2\p@\kern\@tempdima&\thinspace\hfil$##$\hfil
      &&\quad\hfil$##$\hfil\crcr
      \omit\strut\hfil\crcr\noalign{\kern-\baselineskip}%
      #1\crcr\omit\strut\cr}}%
  \setbox\tw@\vbox{\unvcopy\z@\global\setbox\@ne\lastbox}%
  \setbox\tw@\hbox{\unhbox\@ne\unskip\global\setbox\@ne\lastbox}%
  \setbox\tw@\hbox{$\kern\wd\@ne\kern-\@tempdima\left[\kern-\wd\@ne
    \global\setbox\@ne\vbox{\box\@ne\kern2\p@}%
    \vcenter{\kern-\ht\@ne\unvbox\z@\kern-\baselineskip}\,\right]$}%
  \null\;\vbox{\kern\ht\@ne\box\tw@}\endgroup}
\begin{document}

\vspace*{3.0em}

\title[Virtual resolutions]%
  {Virtual resolutions for a product of projective spaces}

\author[C.~Berkesch]{Christine Berkesch}
\address{Christine Berkesch: School of Mathematics, University of
  Minnesota, Minneapolis, Minnesota, 55455, United States of America;
  {\normalfont \texttt{cberkesc@math.umn.edu}}}

\author[D.~Erman]{Daniel Erman} 
\address{Daniel Erman: Department of Mathematics, University of Wisconsin,
  Madison, Wisconsin, 53706, United States of America; {\normalfont
    \texttt{derman@math.wisc.edu}}}

\author[G.G.~Smith]{Gregory G.{} Smith}
\address{Gregory G.{} Smith: Department of Mathematics \& Statistics, Queen's
  University, Kingston, Ontario, K7L 3N6, Canada; {\normalfont
    \texttt{ggsmith@mast.queensu.ca}}} 

\thanks{CB was partially supported by the NSF Grant DMS-1440537, DE was
  partially supported by the NSF Grants DMS-1302057 and DMS-1601619, and GGS
  was partially supported by the NSERC}

\subjclass[2010]{13D02; 14M25, 14F05}


\begin{abstract}
  Syzygies capture intricate geometric properties of a subvariety in
  projective space.  However, when the ambient space is a product of
  projective spaces or a more general smooth projective toric variety, minimal
  free resolutions over the Cox ring are too long and contain many
  geometrically superfluous summands.  In this paper, we construct some much
  shorter free complexes that better encode the geometry.
\end{abstract}

\maketitle

\setcounter{section}{1}

\noindent
The geometric and algebraic sources of locally-free resolutions have
complementary advantages.  To see the differences, consider a smooth
projective toric variety $X$ together with its $\Pic(X)$-graded Cox ring $S$.
The local version of the Hilbert Syzygy Theorem implies that any coherent
$\cO_X^{}$-module admits a locally-free resolution of length at most $\dim X$;
see Exercise~III.6.9 in \cite{Hartshorne}.  The global version of the Hilbert
Syzygy Theorem implies that every saturated module over the polynomial ring
$S$ has a minimal free resolution of length at most $\dim S - 1$, so any
coherent $\cO_X^{}$-module has a locally-free resolution of the same length;
see Proposition~3.1 in \cite{cox}.  Unlike the geometric approach, this
algebraic method only involves vector bundles that are a direct sum of line
bundles. When $X$ is projective space, these geometric and algebraic
constructions usually coincide.  However, when the Picard number of $X$ is
greater than $1$, the locally-free resolutions arising from the minimal free
resolution of an $S$-module are longer, and typically much longer, than their
geometric counterparts.

To enjoy the best of both worlds, we focus on a more flexible algebraic source
for locally-free resolutions.  The following definition, beyond providing
concise terminology, highlights this source.

\begin{definition}
  \label{def:splendid}
  A free complex $F \coloneq [F_0 \gets F_1 \gets F_2 \gets \dotsb]$ of
  $\Pic(X)$-graded $S$-modules is called a \define{virtual resolution} of a
  $\Pic(X)$-graded $S$-module $M$ if the corresponding complex $\widetilde{F}$
  of vector bundles on $X$ is a locally-free resolution of the sheaf
  $\widetilde{M}$.
\end{definition}

\noindent
In other words, a virtual resolution is a free complex of $S$-modules whose
higher homology groups are supported on the irrelevant ideal of $X$.  The
benefits of allowing a limited amount of homology are already present in other
parts of commutative algebra including almost ring
theory~\cite{almost-ring-theory}, where one accepts homology annihilated by a
given idempotent ideal, and phantom homology~\cite{phantom}, where one admits
cycles that are in the tight closure of the boundaries.  In this paper, we
describe a few different, and generally incomparable, processess for creating
virtual resolutions.

For projective space, minimal free resolutions are important in the study of
points~\cites{GGP, EP}, curves~\cites{V, EL}, surfaces~\cites{GP, DS}, and
moduli spaces~\cites{farkas, DFS}.  Our overarching goal is to demonstrate
that the right analogues for subschemes in a smooth complete toric variety use
virtual resolutions rather than minimal free resolutions.  This distinction is
not apparent on projective space because the New Intersection
Theorem~\cites{Roberts} establishes that a free complex with finite-length
higher homology groups has to be at least as long as the minimal free
resolution.  For other toric varieties such as products of projective spaces,
allowing irrelevant homology may yield simpler complexes; see
Example~\ref{exa:curveI}.

Throughout this paper, we write
$\PP^{\bbn} \coloneq \PP^{n_1} \times \PP^{n_2} \times \dotsb \times
\PP^{n_r}$ for the product of projective spaces with dimension vector
$\bbn \coloneq (n_1, n_2, \dotsc, n_r) \in \NN^r$ over a field $\kk$.  Let
$S \coloneq \kk[x_{i,j} : \text{$1 \leq i \leq r$, $0 \leq j \leq n_i$}]$ be
the Cox ring of $\PP^{\bbn}$ and let
$B \coloneq \bigcap_{i=1}^r \ideal{x_{i,0}, x_{i,1}, \dotsc, x_{i,n_i}}$ be
its irrelevant ideal.  We identify the Picard group of $\PP^{\bbn}$ with
$\ZZ^r$ and partially order the elements via their components.  If
$\bbe_1, \bbe_2, \dotsc, \bbe_r$ is the standard basis of $\ZZ^r$, then the
polynomial ring $S$ has the $\ZZ^r$-grading induced by
$\deg(x_{i,j}) \coloneq \bbe_i$.  We first reprove the existence of short
virtual resolutions; compare with Corollary~2.14 in
\cite{eisenbud-erman-schreyer-tate-products}.

\begin{proposition}
  \label{pro:splendidComplexesExist}
  Every finitely-generated $\ZZ^r$-graded $B$-saturated $S$-module has a
  virtual resolution of length at most
  $\abs{\bbn} \coloneq n_1 + n_2 + \dotsb + n_r = \dim \PP^{\bbn}$.
\end{proposition}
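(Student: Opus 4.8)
The plan is to build the virtual resolution from a resolution of the diagonal, after twisting $\widetilde{M}$ to be sufficiently positive. Since whether a free complex is a virtual resolution of $M$ depends only on the sheaf $\widetilde{M}$, it suffices to exhibit a free complex of $\ZZ^r$-graded $S$-modules, of length at most $\abs{\bbn}$, whose sheafification is a locally-free resolution of $\cF \coloneq \widetilde{M}$. Write $X \coloneq \PP^{\bbn}$, let $p_1, p_2 \colon X \times X \to X$ be the two projections, and let $\iota \colon X \hookrightarrow X \times X$ be the diagonal; I identify $\Pic(X \times X)$ with $\ZZ^r \times \ZZ^r$ and set $\cO_{X \times X}(\bbu', \bbu'') \coloneq p_1^* \cO_X(\bbu') \otimes p_2^* \cO_X(\bbu'')$. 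The heart of the argument is a resolution of $\iota_* \cO_X$ by direct sums of line bundles of length exactly $\abs{\bbn}$.

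To produce it, I would handle one factor at a time. The diagonal $\Delta_i \subseteq \PP^{n_i} \times \PP^{n_i}$ is defined by the $2 \times 2$ minors of the $2 \times (n_i + 1)$ matrix whose rows are the homogeneous coordinates on the two copies of $\PP^{n_i}$, and $\codim \Delta_i = n_i$ is the expected codimension of this determinantal ideal. Hence, by the standard acyclicity criterion, the Eagon--Northcott complex of that matrix is a free resolution of the homogeneous coordinate ring of $\Delta_i$ over the Cox ring of $\PP^{n_i} \times \PP^{n_i}$, of length $n_i$, with terms sheafifying to direct sums of line bundles. Under $X \times X \cong \prod_{i=1}^r (\PP^{n_i} \times \PP^{n_i})$ one has $\iota(X) = \prod_{i=1}^r \Delta_i$; since the $r$ groups of variables are pairwise disjoint, these $r$ complexes are Tor-independent over the Cox ring of $X \times X$, so their tensor product, sheafified, is a complex $\cK_\bullet$ of $\cO_{X \times X}$-modules resolving $\iota_* \cO_X$, of length $n_1 + \dotsb + n_r = \abs{\bbn}$, with each term a finite direct sum of line bundles $\cO_{X \times X}(\bbu', \bbu'')$ where $(\bbu', \bbu'')$ ranges over a fixed finite subset of $\ZZ^r \times \ZZ^r$.

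With $\cK_\bullet$ in hand, I would fix $\bbd \coloneq m \boldone$ for $m$ large and push the twist $\cF(\bbd) \coloneq \cF \otimes \cO_X(\bbd)$ through it. Because $p_1$ is flat and $p_1 \circ \iota = \mathrm{id}_X$, one has $\mathbf{L}\iota^* p_1^* \cF(\bbd) = \mathbf{L}(p_1 \iota)^* \cF(\bbd) = \cF(\bbd)$, so the projection formula gives $p_1^* \cF(\bbd) \otimes^{\mathbf{L}} \iota_* \cO_X = \iota_* \cF(\bbd)$; as $\cK_\bullet$ has flat terms, $p_1^* \cF(\bbd) \otimes_{\cO_{X \times X}} \cK_\bullet$ is a bounded complex quasi-isomorphic to $\iota_* \cF(\bbd)$, and its $j$-th term is a direct sum of sheaves $\cF(\bbd + \bbu') \boxtimes \cO_X(\bbu'')$ with $(\bbu', \bbu'')$ in the fixed finite set. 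Serre vanishing (using that $\cO_X(\boldone)$ is ample) lets me choose $m$ so large that $\HH^q\bigl(X, \cF(\bbd + \bbu')\bigr) = 0$ for every $q > 0$ and every relevant $\bbu'$, so each term of $p_1^* \cF(\bbd) \otimes \cK_\bullet$ is acyclic for $p_{2*}$. Then $p_{2*}$ applied termwise computes the total derived pushforward, and since $p_2 \circ \iota = \mathrm{id}_X$ the result is quasi-isomorphic to $p_{2*} \iota_* \cF(\bbd) = \cF(\bbd)$; being supported in homological degrees $0, \dotsc, \abs{\bbn}$, it is a locally-free resolution of $\cF(\bbd)$ whose $j$-th term is the direct sum of the line bundles $\HH^0\bigl(X, \cF(\bbd + \bbu')\bigr) \otimes_{\kk} \cO_X(\bbu'')$. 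Using $\Gamma_*(\cO_X) = S$ to lift the differentials, this complex is the sheafification of a free complex $G$ of $\ZZ^r$-graded $S$-modules of length at most $\abs{\bbn}$, and $G(-\bbd)$ is then the required virtual resolution of $M$.

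I expect the main difficulty to be the chain of compatibilities in the third paragraph: that tensoring $\cK_\bullet$ with $p_1^* \cF(\bbd)$ stays exact, which I plan to deduce from Tor-independence of the diagonal and the flatness of $p_1$; and that, once $m$ is chosen via Serre vanishing, $p_{2*}$ can be applied to the complex one term at a time without spectral-sequence corrections. These are exactly where the hypotheses enter: $B$-saturation is used only to pass to $\widetilde{M}$ and back via $\Gamma_*$, while the bound $\abs{\bbn}$ is inherited wholesale from the length of the diagonal resolution $\cK_\bullet$. A secondary point to confirm is the acyclicity of the Eagon--Northcott complex of the coordinate matrix, which is standard given $\codim \Delta_i = n_i$.
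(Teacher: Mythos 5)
Your argument is correct, and it follows the same Fourier--Mukai skeleton as the paper's proof---resolve the diagonal, tensor with the pullback of $\widetilde{M}(\bbd)$ for $\bbd\gg 0$, collapse the pushforward spectral sequence with a vanishing theorem, and lift to free $S$-modules---but it swaps out the key lemma. The paper (Lemma~\ref{lem:Diagonal}) resolves the diagonal by the Beilinson-type Koszul complex on a section of $\bigoplus_i \cO_{\PP^{\bbn}}^{}(\bbe_i)\boxtimes\cT_{\PP^{\bbn}}^{\bbe_i}(-\bbe_i)$, whose terms are $\cO_{\PP^{\bbn}}^{}(-\bbu)\boxtimes\Omega_{\PP^{\bbn}}^{\bbu}(\bbu)$, whereas you resolve it by line bundles on \emph{both} factors, as the external tensor product of the Eagon--Northcott complexes of the $2\times(n_i+1)$ coordinate matrices. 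Your secondary worry is unfounded: the $2\times2$ minors generate the prime ideal of the diagonal in $\PP^{n_i}\times\PP^{n_i}$, which has the expected codimension $n_i$, the bigrading makes every Eagon--Northcott term a twisted free module, and Tor-independence across the disjoint variable sets is immediate; so each factor contributes length $n_i$ and the total is $\abs{\bbn}$. What your route buys is elementariness: the only vanishing needed is $H^q\bigl(\PP^{\bbn},\widetilde{M}(\bbd+\bbu')\bigr)=0$ for finitely many fixed $\bbu'$, which is ordinary Serre vanishing, while the paper must control $H^q\bigl(\Omega_{\PP^{\bbn}}^{\bbu}(\bbu+\bbd)\otimes\widetilde{M}\bigr)$ and invokes Fujita vanishing. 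What the paper's kernel buys is reuse: the $\Omega^{\bbu}(\bbu)$ terms are exactly what Lemma~\ref{lem:noHigherCohomology} and Theorem~\ref{thm:linearResolutions} need to tie virtual resolutions to multigraded regularity, which a line-bundle kernel would not directly provide. The remaining steps of your argument---exactness of $p_1^*\widetilde{M}(\bbd)\otimes\cK_\bullet$ via flatness of the terms of $\cK_\bullet$ and the projection formula, termwise pushforward once every term is $p_{2*}$-acyclic, and lifting matrices of global sections to $S$ via $H^0\bigl(\PP^{\bbn},\cO_{\PP^{\bbn}}^{}(\bbc)\bigr)=S_{\bbc}$---are sound and correspond to the paper's two hypercohomology spectral sequences.
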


\noindent
Since $\dim S - \dim \PP^{\bbn} = r$, we see that a minimal free resolution
can be arbitrarily long when compared with a virtual resolution.  A proof of
Proposition~\ref{pro:splendidComplexesExist}, which relies on a locally-free
resolution of the structure sheaf for the diagonal embedding
$\PP^{\bbn} \hookrightarrow \PP^{\bbn} \times \PP^{\bbn}$, appears in
Section~\ref{sec:hst}.

Besides having shorter representatives, virtual resolutions also exhibit a
closer relationship with Castelnuovo--Mumford regularity than minimal free
resolutions.  On projective space, Castelnuovo--Mumford regularity has two
equivalent descriptions: one arising from the vanishing of sheaf cohomology
and another arising from the Betti numbers in a minimal free resolutions.
However, on more general toric varieties, 
the multigraded Castelnuovo--Mumford regularity is not determined by a minimal
free resolution; see Theorem~1.5 in \cite{maclagan-smith} or Theorem~4.7 in
\cite{BC}.  From this perspective, we demonstrate that virtual resolutions
improve on minimal free resolutions in two ways.  First,
Theorem~\ref{thm:linearResolutions} proves that the set of virtual resolutions
of a module determines its multigraded Castelnuovo--Mumford regularity.
Second, the next theorem, from Section~\ref{sec:winnow}, demonstrates how to
use regularity to extract a virtual resolution from a minimal free resolution.

\begin{theorem}
  \label{thm:winnow}
  Let $M$ be a finitely-generated $\ZZ^r$-graded $B$-saturated $S$-module that
  is $\bbd$-regular. If $G$ is the free subcomplex of a minimal free
  resolution of $M$ consisting of all summands generated in degree at most
  $\bbd+\bbn$, then $G$ is a virtual resolution of $M$.
\end{theorem}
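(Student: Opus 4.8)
The plan is to prove that $G$ is a virtual resolution by verifying two things: that $\widetilde{G}$ is a complex of vector bundles whose only homology is in degree zero, and that this homology sheaf is $\widetilde{M}$. The key observation is that, since $G$ is a subcomplex obtained by truncating a minimal free resolution $F$ of $M$, the quotient complex $F/G$ (which consists of all summands of $F$ generated in some degree $\not\leq \bbd+\bbn$) fits into a short exact sequence of complexes $0 \to G \to F \to F/G \to 0$, and the associated long exact sequence in homology reduces everything to understanding the homology of $F/G$. Since $F$ is a resolution of $M$, it is exact except at the zeroth spot; so if I can show that $\widetilde{F/G}$ is exact as a complex of sheaves, then the long exact sequence forces $\HH_0(\widetilde{G}) = \widetilde{M}$ and $\HH_i(\widetilde{G}) = 0$ for $i > 0$, which is exactly the definition of a virtual resolution.

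\smallskip

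So the heart of the argument is to show that the sheafification of $F/G$ is exact. The idea is to filter or decompose $F/G$ using the regularity hypothesis. Each free summand of $F/G$ has the form $S(-\bba)$ with $\bba \not\leq \bbd+\bbn$, meaning $a_k > d_k + n_k$ for at least one index $k$. The multigraded Castelnuovo--Mumford regularity of $M$ being $\bbd$ controls exactly the degrees in which sheaf cohomology $\HH^j(\PP^{\bbn}, \widetilde{M}(\bbe))$ vanishes, and more to the point (via the description of regularity in terms of local cohomology of the module), it controls the degrees of the generators appearing in $F$: a $\bbd$-regular module has a minimal free resolution whose $i$-th term is generated in degrees $\bbe$ with $\bbe \leq \bbd + $ (something depending on $i$ and $\bbn$). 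I would invoke the known bound --- from \cite{maclagan-smith} --- that if $M$ is $\bbd$-regular then the $i$-th syzygy module is generated in degrees $\bbe$ with $|\bbe - \bbd| \leq \ldots$; more precisely, the degrees $\bba$ of generators of $F_i$ satisfy $\bba \leq \bbd + \bbn$ as soon as $i$ is small, and the summands violating $\bba \leq \bbd+\bbn$ only enter further along. The point is to argue that $F/G$, when sheafified, is built from pieces $\cO_{\PP^{\bbn}}(-\bba)$ with each $\bba$ having some coordinate $a_k > d_k + n_k$, and such a complex is exact because locally on each chart the relevant cohomology vanishes.

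\smallskip

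More carefully, the cleanest route is probably a Koszul-type or coordinate-wise induction on $r$. Write $\PP^{\bbn} = \PP^{n_1} \times \PP^{\bbn'}$ where $\bbn' = (n_2, \dotsc, n_r)$, and consider the $\ZZ$-grading in the first coordinate. A line bundle $\cO(-\bba)$ with $a_1 > d_1 + n_1$ has no cohomology along the $\PP^{n_1}$ factor below a certain twist, and one can try to descend to $\PP^{\bbn'}$ via a Leray spectral sequence or by pushing forward along the projection $\pi_1 \colon \PP^{\bbn} \to \PP^{\bbn'}$. Concretely, I expect the proof to run: since $F$ is exact off $V(B)$, so is $G$ off a possibly larger set, but one shows that the homology of $\widetilde{G}$ vanishes on each affine chart $U_{i_1,\dots,i_r} = \prod_k \{x_{k,i_k} \neq 0\}$ by checking that $(F/G)$ localized at the corresponding monomial is exact --- this uses that a generator $S(-\bba)$ with $a_k > d_k+n_k$ becomes, after inverting $x_{k,0},\dots,x_{k,n_k}$ suitably, trivial or cancels in Čech-like cohomology.

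\smallskip

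The main obstacle, I expect, is pinning down the precise quantitative relationship between $\bbd$-regularity and the degrees of the generators of the minimal free resolution --- i.e., showing that no summand of $G$ (the part with generators in degree $\leq \bbd + \bbn$) is needed to "fix up" the homology that the discarded part $F/G$ might otherwise contribute, and dually that the discarded part genuinely sheafifies to an exact complex. This requires the sharp multigraded regularity statement controlling Betti degrees (the analogue of "a $d$-regular module on $\PP^n$ has its $i$-th syzygies generated in degree $\leq d+i$"), together with the vanishing $\HH^j(\PP^{\bbn}, \cO(-\bba)) = 0$ for all $j$ when $\bba$ has a coordinate $a_k$ in the "forbidden range" $1 \leq a_k \leq n_k$ --- wait, rather $a_k > n_k$ in the relevant twist --- and assembling these via the Künneth formula into a statement about the whole complex $F/G$. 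Everything else (the short exact sequence of complexes, the long exact homology sequence, and the identification $\HH_0 = \widetilde{M}$) is formal.
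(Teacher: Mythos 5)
Your formal reduction is the same as the paper's: split the minimal free resolution as $F = G \oplus E$ (with $E$ the discarded summands), take the short exact sequence of complexes $0 \to G \to F \to E \to 0$, and reduce to showing that $\widetilde{E}$ is quasi-isomorphic to zero. But the heart of the proof --- why $\widetilde{E}$ is exact --- is missing, and both routes you sketch fail. First, the quantitative statement you hope to invoke, that $\bbd$-regularity forces the generators of the low syzygy modules into degrees $\leq \bbd + \bbn$ so that the ``bad'' summands only enter further along in the resolution, is false in the multigraded setting; this failure is precisely what the paper emphasizes in its introduction (multigraded regularity is not determined by the minimal free resolution), and Example~\ref{exa:curveI} is a counterexample: the module there is $(2,1)$-regular, yet $F_1$ already contains $S(0,-8)$ and $S(-1,-5)^3$. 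Second, the chart-by-chart localization idea has no mechanism behind it: the terms of $\widetilde{E}$ are direct sums of line bundles $\cO_{\PP^{\bbn}}^{}(-\bba)$, which do not become trivial or ``cancel'' after restricting to an affine chart, so nothing local forces the localized complex to be exact --- exactness of $\widetilde{E}$ is a genuinely global assertion about how the differentials of the minimal free resolution interact with the discarded summands.

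What actually closes the gap in the paper is a derived-category argument via the Beilinson monad (Lemma~\ref{lem:beilisonWindow}): a bounded complex $\cE$ of coherent sheaves on $\PP^{\bbn}$ satisfying $\HHH^{*}\bigl(\PP^{\bbn}, \cE(\bbb)\bigr) = 0$ for all $\boldzero \leq \bbb \leq \bbn$ is quasi-isomorphic to zero. One then verifies this hypothesis for $\cE = \widetilde{E}$: after normalizing $\bbd = \boldzero$, the summands of $\widetilde{F}(\bbb)$ with global sections are exactly those of $\widetilde{G}(\bbb)$, and the latter have no higher cohomology; regularity guarantees both that $H^0$ of the complex $\widetilde{F}(\bbb)$ computes $M_{\bbb}$ (the Hilbert function agrees with the Hilbert polynomial in degree $\bbb$) and that $\widetilde{F}(\bbb)$ has no higher hypercohomology; the long exact sequence in hypercohomology then kills all hypercohomology of $\widetilde{E}(\bbb)$ in the window. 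Without this input, or an equivalent substitute, your argument does not go through.
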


\noindent
This subcomplex is seldom a resolution.  For convenience, we refer to the free
complex $G$ as the \define{virtual resolution of the pair $(M,\bbd)$}.
Algorithm~\ref{alg:computeWinnow} shows that it can be constructed without
computing the entire minimal free resolution.

The following example illustrates that a virtual resolution can be much
shorter and much thinner than the minimal free resolution.  It follows that a
majority of the summands in the minimal free resolution are unneeded when
building a locally-free resolution of the structure sheaf.

\begin{example}
  \label{exa:curveI}
  A hyperelliptic curve $C$ of genus $4$ can be embedded as a curve of
  bidegree $(2,8)$ in $\PP^1 \times \PP^2$; see Theorem~IV.5.4 in
  \cite{Hartshorne}.  For instance, the $B$-saturated $S$-ideal
  \[
    I \coloneq \left\langle
      \begin{array}{>{\raggedright\tiny}p{415pt}}
        $x_{1,1}^3 x_{2,0}^{} - x_{1,1}^3 x_{2,1}^{} + x_{1,0}^3 x_{2,2}^{}$, 
        $x_{1,0}^2 x_{2,0}^2 + x_{1,1}^2 x_{2,1}^2 + x_{1,0}^{} x_{1,1}^{} x_{2,2}^2$,
        $x_{1,1}^2 x_{2,0}^3 - x_{1,1}^2 x_{2,0}^2 x_{2,1}^{} - x_{1,0}^{}
        x_{1,1}^{} x_{2,1}^2 x_{2,2}^{} - x_{1,0}^2 x_{2,2}^3$, $x_{1,0}^{}
        x_{1,1}^{} x_{2,0}^3 +$ $\relphantom{+} x_{1,0}^{} x_{1,1}^{} x_{2,0}^2
        x_{2,1}^{} - x_{1,0}^2 x_{2,1}^2 x_{2,2}^{} + x_{1,1}^2 x_{2,0}^{}
        x_{2,2}^2 + x_{1,1}^2 x_{2,1}^{} x_{2,2}^2$, $x_{1,1}^{} x_{2,0}^3
        x_{2,1}^2 + x_{1,1}^{} x_{2,0}^2 x_{2,1}^3 - x_{1,0}^{} x_{2,1}^4
        x_{2,2}^{} - x_{1,0}^{} x_{2,0}^3 x_{2,2}^2 + x_{1,0}^{}
        x_{2,0}^2 x_{2,1}^{} x_{2,2}^2 -$ $\relphantom{-} x_{1,1}^{} x_{2,0}^{}
        x_{2,2}^4 - x_{1,1}^{} x_{2,1}^{} x_{2,2}^4$, $x_{1,1}^{} x_{2,0}^5 +
        x_{1,1}^{} x_{2,0}^4 x_{2,1}^{} - x_{1,0}^{} x_{2,0}^2 x_{2,1}^2
        x_{2,2}^{} + x_{1,1}^{} x_{2,1}^2 x_{2,2}^3 + x_{1,0}^{} x_{2,2}^5$, 
        $x_{1,0}^{} x_{2,0}^5 + x_{1,0}^{} x_{2,0}^4 x_{2,1}^{} + x_{1,1}^{} x_{2,1}^4
        x_{2,2}^{} +$ $\relphantom{+} x_{1,1}^{} x_{2,0}^3 x_{2,2}^2 + x_{1,1}^{}
        x_{2,0}^2 x_{2,1}^{} x_{2,2}^2 + x_{1,0}^{} x_{2,1}^2 x_{2,2}^3$,
        $x_{2,0}^8 + 2 x_{2,0}^7 x_{2,1}^{} + x_{2,0}^6 x_{2,1}^2 + x_{2,1}^6
        x_{2,2}^2 + 3 x_{2,0}^3 x_{2,1}^2 x_{2,2}^3 + 3 x_{2,0}^2 x_{2,1}^3
        x_{2,2}^3 - x_{2,0}^{} x_{2,2}^7 - x_{2,1}^{} x_{2,2}^7$
      \end{array}
    \right\rangle 
  \]
  defines such a curve. \emph{Macaulay2}~\cite{M2} shows that the minimal free
  resolution of $S/I$ has the form
  \begin{align}
    \label{eqn:resolution curve}
    &S^1 \gets 
      \begin{matrix} 
        S(-3,-1)^1 \\[-3pt]
        \oplus     \\[-3pt]
        S(-2,-2)^1 \\[-3pt]
        \oplus     \\[-3pt]
        S(-2,-3)^2 \\[-3pt]
        \oplus     \\[-3pt]
        S(-1,-5)^3 \\[-3pt]
        \oplus     \\[-3pt]
        S(0,-8)^1
      \end{matrix} 
    \xleftarrow{\;\; \relphantom{\varphi} \;\;}
      \begin{matrix} 
        S(-3,-3)^3 \\[-3pt]
        \oplus     \\[-3pt] 
        S(-2,-5)^6 \\[-3pt] 
        \oplus     \\[-3pt] 
        S(-1,-7)^1 \\[-3pt] 
        \oplus     \\[-3pt]
        S(-1,-8)^2
      \end{matrix}
    \gets 
      \begin{matrix} 
        S(-3,-5)^3 \\[-3pt] 
        \oplus     \\[-3pt] 
        S(-2,-7)^2 \\[-3pt] 
        \oplus     \\[-3pt] 
        S(-2,-8)^1
      \end{matrix}
    \gets S(-3,-7)^1 \gets 0 \, .
    \intertext{Using the Riemann--Roch
    Theorem~\cite{Hartshorne}*{Theorem~IV.1.3}, one verifies that the module
    $S/I$ is $(4,2)$-regular, so the virtual resolution of the pair $\bigl(S/I,
    (4,2) \bigr)$ has the much simpler
    form}
    \label{eqn:aCMcurve}
    & S^1 \gets 
      \begin{matrix} 
        S(-3,-1)^1 \\[-3pt] 
        \oplus \\[-3pt] 
        S(-2,-2)^1 \\[-3pt] 
        \oplus \\[-3pt] 
        S(-2,-3)^2
      \end{matrix} 
    \xleftarrow{\;\; \varphi \;\;} S(-3,-3)^3 \gets 0 \, .
  \end{align}
  If the ideal $J \subset S$ is the image of the first map in
  \eqref{eqn:aCMcurve}, then we have $J = I \cap Q$ for some ideal $Q$ whose
  radical contains the irrelevant ideal.  Using
  Proposition~\ref{pro:resolutionBound}, we can even conclude that $S/J$ is
  Cohen--Macaulay and $J$ is the ideal of maximal minors of the $4 \times 3$
  matrix
  \begin{equation}
    \label{eqn:phi}
    \varphi \coloneq
    \begin{bmatrix}
      x_{2,1}^{2}                          & x_{2,2}^{2} & -x_{2,0}^{2}     \\
      - x_{1,1}^{} (x_{2,0}^{} - x_{2,1}^{}) & 0      & x_{1,0}^{} x_{2,2}^{} \\
      x_{1,0}^{}                           & -x_{1,1}^{} & 0               \\
      0                                   & x_{1,0}^{}  & x_{1,1}^{}       \\
    \end{bmatrix} \, .
    \qedhere
  \end{equation}
\end{example} 

As an initial step towards our larger goal, we formulate a novel analogue for
properties of points in projective space.  Although any punctual subscheme of
projective space is arithmetically Cohen--Macaulay, this almost always fails
for a zero-dimensional subscheme of $\PP^{\bbn}$; see \cite{GV}.  However, we
do obtain a short virtual resolution just by choosing an unconventional module
to represent the structure sheaf on the punctual subscheme.
 
\begin{theorem}
  \label{thm:ptsACM}
  Let $Z \subset \PP^{\bbn}$ be a zero-dimensional scheme and let $I$ be its
  corresponding $B$-saturated $S$-ideal.  There exists an $S$-ideal $Q$, whose
  radical contains $B$, such that the minimal free resolution of
  $S/(I \cap Q)$ has length $\abs{\bbn}$.  In particular, the minimal free
  resolution of $S/(I \cap Q)$ is a virtual resolution of $S/I$.
\end{theorem}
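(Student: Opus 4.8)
The plan is to produce the ideal $Q$ by a Bertini-type / general hyperplane argument, reducing the zero-dimensional scheme $Z$ to a situation where Cohen--Macaulayness becomes visible after a change of the irrelevant locus. More precisely, I would first choose, for each factor $\PP^{n_i}$, a sufficiently general hypersurface section so that the preimages in $\PP^{\bbn}$ cut out a complete-intersection-like locus $Y$ containing $Z$ of the same dimension $0$. The key point is that we are free to throw away everything supported on $B$, so it suffices to arrange that the saturation issues only occur on the irrelevant locus. Concretely: let $d$ be large enough that $I$ is generated in degrees $\leq (d,d,\dots,d)$ and that $S/I$ is $(d,\dots,d)$-regular (such $d$ exists by Proposition~\ref{pro:splendidComplexesExist} and the regularity results quoted in the introduction). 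For each $i$, pick general forms $f_{i,1},\dots,f_{i,n_i}$ of degree $d\,\bbe_i$; let $L$ be the ideal they generate, and set $Q \coloneq (L : I^\infty)$ or, more carefully, the component of $L$ whose support avoids $Z$. Then $I \cap Q$ agrees with $L$ away from a set containing $B$ together with finitely many points, and one adjusts $Q$ to absorb those points as well.

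The main structural input I would use is the standard fact that a generic complete intersection of the right multidegree in $\PP^{\bbn}$ is Cohen--Macaulay of codimension $\abs{\bbn}$, hence its minimal free resolution over $S$ has length exactly $\abs{\bbn} = \dim S - r - 0$... wait, here is the subtlety: $\dim S = \abs{\bbn}+r$, so a Cohen--Macaulay quotient of dimension $r$ has projective dimension $\abs{\bbn}$, which is precisely the desired length. So the real content is: find $Q$ with $\radical B \subseteq \radical Q$ such that $S/(I\cap Q)$ is Cohen--Macaulay of dimension $r$. I would do this by intersecting $I$ with the ideal of a generic union of $r$-dimensional linear (or complete-intersection) subspaces that, together with $Z$, form a Cohen--Macaulay scheme; the genericity ensures the Tor-vanishing needed for the length bound, and the construction guarantees the extra component $Q$ lies in the irrelevant locus.

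The key steps, in order: (1) Reduce to finding $Q$ making $S/(I\cap Q)$ Cohen--Macaulay of dimension $r$, since projective dimension then equals $\abs{\bbn}$ by Auslander--Buchsbaum, giving a length-$\abs{\bbn}$ minimal free resolution. (2) Since $Z$ is zero-dimensional, $S/I$ has dimension $r$ (one unit of Krull dimension survives from each factor, corresponding to the irrelevant-ideal directions). (3) Build $Q$ as the ideal of a general complete intersection $Y$ of hypersurfaces of multidegrees $d_1\bbe_1,\dots,d_r\bbe_r$ for suitable $d_i$, refined so that $\variety(Q)$ is disjoint from $Z$ set-theoretically but meets $B$; here $I + Q$ is $B$-saturated-trivial, i.e.\ $\variety(I+Q)\subseteq\variety(B)$, which forces $\radical(I+Q)\supseteq B$... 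I need $\radical Q \supseteq B$, so instead I take $Q$ to be supported on $\variety(B)$ from the start, e.g.\ a fat-point structure concentrated at the torus-fixed points or along the coordinate subspaces, chosen with the right Hilbert polynomial so that $I\cap Q$ becomes the ideal of a CM scheme. (4) Verify Cohen--Macaulayness of $S/(I\cap Q)$ via the short exact sequence $0 \to S/(I\cap Q) \to S/I \oplus S/Q \to S/(I+Q) \to 0$ and a depth count, using that $S/(I+Q)$ is supported on $\variety(B)$ and that for generic choices its depth is controlled. (5) Conclude that the minimal free resolution of $S/(I\cap Q)$ has length $\abs{\bbn}$ and, since $Q$ is $B$-irrelevant, that $\widetilde{S/(I\cap Q)} = \widetilde{S/I}$, so this resolution is a virtual resolution of $S/I$.

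The hard part will be step (3)–(4): choosing $Q$ so that $S/(I\cap Q)$ is genuinely Cohen--Macaulay. Making $I\cap Q$ the ideal of a complete intersection would be cleanest, but $I$ itself is rarely a complete intersection; the realistic route is the classical "linkage / general position" trick — embed $Z$ in a complete intersection $Y$ of multidegree $(d_1\bbe_1,\dots,d_r\bbe_r)$ and let $Q$ be the residual ideal $(I_Y : I)$, which is CM by linkage theory, but one must check that $\variety(Q)$ has been pushed into $\variety(B)$, which is exactly where the genericity of $Y$ (relative to the fixed $Z$) is spent. So the obstacle is verifying that the residual scheme to $Z$ inside a sufficiently general complete intersection of the right multidegree is supported on the irrelevant locus; this should follow from a dimension count plus the fact that $Z$ is finite, but it is the step requiring genuine care.
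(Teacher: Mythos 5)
Your step (1) reduces the problem to making $S/(I\cap Q)$ Cohen--Macaulay of dimension $r$, and this reduction is where the proposal breaks. If the minimal free resolution is to have length exactly $\abs{\bbn}$, then Auslander--Buchsbaum forces $\operatorname{depth} S/(I\cap Q)=r$; for this to coincide with Cohen--Macaulayness you would need $\dim S/Q\leq r$, i.e.\ every associated prime of $Q$ of codimension exactly $\abs{\bbn}$, while simultaneously $\variety(Q)\subseteq\variety(B)$. None of your candidate ideals satisfies both constraints: a complete intersection of $r$ hypersurfaces has codimension only $r$; the coordinate subspaces $\variety(P_i)$ with $P_i=\ideal{x_{i,0},\dotsc,x_{i,n_i}}$ have codimension $n_i+1<\abs{\bbn}$ once $r\geq 2$ and the other factors are not all points; and the residual of $Z$ in a general complete intersection containing $Z$ is not supported on $\variety(B)$ at all. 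Indeed the paper explicitly warns, right after the theorem, that its $Q$ makes $S/(I\cap Q)$ \emph{not} Cohen--Macaulay in general: the target is depth $r$, which is strictly weaker. Your step (4) then cannot close the argument even for the weaker target, because the Mayer--Vietoris sequence $0\to S/(I\cap Q)\to S/I\oplus S/Q\to S/(I+Q)\to 0$ bounds the depth of $S/(I\cap Q)$ in terms of the depth of $S/I$, and the depth of $S/I$ is typically $1$ --- punctual schemes in $\PP^{\bbn}$ are almost never arithmetically Cohen--Macaulay, which is precisely why the theorem has content.

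The idea you are missing is to compare $S/(I\cap Q)$ with a \emph{truncation} of $S/I$ rather than with $S/I$ itself. The paper takes $Q=B^{\bba}=\bigcap_i P_i^{a_i}$ with $a_r=0$ and $a_1,\dotsc,a_{r-1}\gg0$, and uses the short exact sequence $0\to(S/I)_{\geq\bba}\to S/(I\cap B^{\bba})\to S/B^{\bba}\to 0$. The two inputs are: (a) for such $\bba$ the truncation $(S/I)_{\geq\bba}$ has depth $r$ --- one exhibits a regular sequence of $r$ general linear forms, one from each factor, using the fact that the graded pieces of $S/I$ stabilize inside $\bigoplus_{\bbb}H^0\bigl(Z,\cO_Z^{}(\bbb)\bigr)$ in sufficiently positive degrees --- so $\pdim(S/I)_{\geq\bba}=\abs{\bbn}$; and (b) $\pdim S/B^{\bba}\leq n_1+\dotsb+n_{r-1}+1\leq\abs{\bbn}$, proved by induction on the number of nonzero entries of $\bba$ (this is where $a_r=0$ is used). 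The exact sequence then gives $\pdim S/(I\cap B^{\bba})\leq\abs{\bbn}$, and the matching lower bound is Proposition~\ref{pro:resolutionBound}(ii). Without the truncation step, no depth count against $S/I$ can succeed, and without an explicit $Q$ your steps (3)--(5) remain a plan rather than a proof.
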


\noindent
This theorem, proven in Section~\ref{sec:punctual}, does not imply that
$S / (I \cap Q)$ is itself Cohen--Macaulay, as the components of $Q$ will
often have codimension less than $\abs{\bbn}$.  However, when the ambient
variety is $\PP^1 \times\PP^1$, the ring $S / (I \cap Q)$ will be
Cohen--Macaulay of codimension $2$.  In this case,
Corollary~\ref{cor:HilbertBurch} shows that there is a matrix whose maximal
minors cut out $Z$ scheme-theoretically.
Proposition~\ref{pro:pointsInGeneralPosition} extends this to general points
on any smooth toric surface.

As a second and perhaps more substantial step, we apply virtual resolutions to
deformation theory.  On projective space, there are three classic situations
in which the particular structure of the minimal free resolution allows one to
show that all deformations have the same structure: arithmetically
Cohen--Macaulay subschemes of codimension $2$, arithmetically Gorenstein
subschemes in codimension $3$, and complete intersections; see
Sections~2.8--2.9 in \cite{hartshorne-deformation}.  We generalize these
results about unobstructed deformations in projective space as follows.

\begin{theorem}
  \label{thm:unobstructedDefs}
  Consider $Y \subset \PP^{\bbn}$ and let $I$ be the corresponding
  $B$-saturated $S$-ideal.  Assume that the generators of $I$ have degrees
  $\bbd_1,\bbd_2, \dots,\bbd_s$ and that the natural map
  $(S/I)_{\bbd_i}\to H^0\bigl( Y, \cO_{Y}^{}(\bbd_i) \bigr)$ is an isomorphism
  for all $1 \leq i \leq s$.  If any one of the following conditions hold
  \begin{enumerate}[\upshape (i)]
  \item the subscheme $Y$ has codimension $2$ and there is
    $\bbd\in \reg (S/I)$ such that the virtual resolution of the pair
    $(S/I,\bbd)$ has length $2$;
  \item each factor in $\PP^{\bbn}$ has dimension at least $2$, the subscheme
    $Y$ has codimension $3$, and there is $\bbd \in \reg (S/I)$ such that the
    virtual resolution of the pair $(S/I,\bbd)$ is a self-dual complex (up to a
    twist) of length $3$; or
  \item there is $\bbd \in \reg (S/I)$ such that virtual resolution of the pair
    $(S/I,\bbd)$ is a Koszul complex of length $\codim Y$;
  \end{enumerate}
  then the embedded deformations of $Y$ in $\PP^{\bbn}$ are unobstructed and
  the component of the multigraded Hilbert scheme of $\PP^{\bbn}$ containing
  the point corresponding to $Y$ is unirational.
\end{theorem}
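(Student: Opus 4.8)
The plan is to reduce each of the three cases to the corresponding classical unobstructedness result on projective space by transplanting the argument to the Cox ring via the virtual resolution of the pair $(S/I, \bbd)$. The starting observation is that the hypothesis on the maps $(S/I)_{\bbd_i} \to H^0\bigl(Y, \cO_Y^{}(\bbd_i)\bigr)$ lets us identify embedded deformations of $Y$ in $\PP^{\bbn}$ with $\ZZ^r$-graded deformations of the $S$-ideal $I$ (or equivalently, deformations of the relevant truncation of $S/I$ as a graded module), so that the relevant obstruction space is a piece of $\Tor$ or $\HH^1$ computed from a free complex resolving $S/I$ locally. The key point is that we may compute these obstructions using the \emph{virtual} resolution $G$ of the pair $(S/I,\bbd)$ rather than the full minimal free resolution: since $\widetilde{G}$ is a locally-free resolution of $\widetilde{S/I} = \cO_Y^{}$, and the relevant cohomology/obstruction groups are computed sheaf-theoretically on $\PP^{\bbn}$ (or on an affine chart away from $B$), the extra irrelevant homology of $G$ does not contribute. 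This is the conceptual heart of the argument, and it is where the definition of virtual resolution does its work.

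Next I would treat the three cases in turn. In case (i), $\widetilde{G}$ is a length-$2$ locally-free resolution of $\cO_Y^{}$ with $Y$ of codimension $2$; this is the Hilbert--Burch situation, and the Buchsbaum--Rim / Hilbert--Burch structure theorem (applied over the appropriate localizations of $S$, or after restricting to $\PP^{\bbn}$) presents $\cO_Y^{}$ as the cokernel of a map whose maximal minors cut out $Y$. Deformations of such a presentation are parametrized by deformations of the entries of the matrix $\varphi$, which form a smooth (indeed affine-space) parameter space, so the deformation functor is smooth and the Hilbert scheme component is unirational — this mirrors \cite{hartshorne-deformation}*{Section~2.8}. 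In case (ii), the self-dual length-$3$ virtual resolution plays the role of the Pfaffian (Buchsbaum--Eisenbud) resolution of an arithmetically Gorenstein codimension-$3$ subscheme; the hypothesis that each factor has dimension at least $2$ ensures the relevant depth/grade conditions hold after restriction so that the Buchsbaum--Eisenbud structure theorem applies, exhibiting $Y$ via the submaximal Pfaffians of an alternating matrix, and again deformations of that alternating matrix give a smooth, rational parameter space. In case (iii), a Koszul virtual resolution of length $\codim Y$ means $Y$ is, sheaf-theoretically on $\PP^{\bbn}$, a complete intersection of the $\codim Y$ forms whose degrees appear in the resolution; deformations are then deformations of those defining forms, which is manifestly unobstructed and unirational. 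In all three cases the final step is to translate "the parameter space of the perturbed matrix/forms surjects onto a neighborhood in the Hilbert scheme" into the statement about the multigraded Hilbert scheme of $\PP^{\bbn}$, using the graded-module/sheaf correspondence in the direction opposite to the one used to set up the obstruction theory.

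The main obstacle I anticipate is the bookkeeping needed to make "compute the obstructions using $\widetilde{G}$ instead of the minimal free resolution" precise and legitimate: one must check that the structure theorems (Hilbert--Burch, Buchsbaum--Eisenbud) can be invoked even though $G$ itself is not a resolution over $S$ — only $\widetilde{G}$ is a resolution of $\cO_Y^{}$ — and that it has the correct grade/depth after passing to the complement of $V(B)$ or to the homogeneous localizations. Concretely, one works over each of the standard affine charts of $\PP^{\bbn}$ (or over $S_{\mathfrak p}$ for relevant primes $\mathfrak p \not\supseteq B$), applies the classical structure theorem there, and then glues; the hypotheses in (i)–(iii) on codimension, on the dimensions of the factors, and on the length and self-duality of $G$ are exactly what guarantee the local grade conditions. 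A secondary technical point is verifying that the isomorphism hypothesis on $(S/I)_{\bbd_i} \to H^0\bigl(Y,\cO_Y^{}(\bbd_i)\bigr)$ suffices to identify the embedded-deformation functor with the module/ideal deformation functor in the relevant degrees; this is where the unirationality (as opposed to merely smoothness) of the Hilbert scheme component comes from, since the matrix/form entries give an explicit rational parametrization of that component.
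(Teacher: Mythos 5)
Your overall framing --- reduce to the classical codimension-$2$ ACM, codimension-$3$ Gorenstein, and complete-intersection cases, after identifying embedded deformations of $Y$ with graded deformations over the Cox ring --- matches the paper's strategy, and your treatment of case (iii) (deform the $\codim Y$ defining forms; the obstruction cocycle is globally liftable because the syzygies are Koszul) is essentially the argument given there. The first step is made precise in the paper by a Piene--Schlessinger Comparison Theorem (Theorem~\ref{thm:comparison}) applied to the possibly non-saturated ideal $J$ with $\HH_0(G)=S/J$; this is where the hypothesis on $(S/I)_{\bbd_i}\to H^0(Y,\cO_Y^{}(\bbd_i))$ is consumed.

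However, for cases (i) and (ii) your proposed workaround for the ``main obstacle'' you identify is where the argument breaks. You plan to apply Hilbert--Burch and Buchsbaum--Eisenbud over the affine charts of $\PP^{\bbn}$ (equivalently over localizations $S_{\mathfrak p}$ with $\mathfrak p\not\supseteq B$) and then glue. This fails on two counts. First, unobstructedness does not glue: the obstruction space is a global $H^1$, and chart-by-chart smoothness of the local deformation functors gives no control over it; likewise, unirationality of the Hilbert scheme component requires a single global matrix (or alternating matrix) over $S$ whose entries parametrize the component, and local presentations on charts do not assemble into one. Second, and more importantly, the missing idea is that under the hypotheses of (i) and (ii) the virtual resolution $G$ \emph{is} an honest minimal free resolution over $S$ of $S/J$: this is part~(iii) of Proposition~\ref{pro:resolutionBound} (cf.\ Corollary~\ref{cor:splendidEqualsMinimal}), proved via the Peskine--Szpiro acyclicity lemma, and it is exactly why condition (ii) demands every factor have dimension at least $2$ (so that $3\leq\min\{n_i+1\}$) --- not, as you guess, to arrange grade conditions ``after restriction.'' Once $G$ is known to be acyclic over $S$, the ring $S/J$ is genuinely Cohen--Macaulay of codimension $2$ (resp.\ Gorenstein of codimension $3$) as a graded affine algebra, and the classical results of Artin/Schaps (resp.\ Mir\'o-Roig) apply directly to $\variety(J)\subset\Spec(S)$, yielding both unobstructedness and the explicit determinantal/Pfaffian parametrization. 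Without this step your proof does not go through.
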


To illustrate this theorem, we can reuse the hyperelliptic curve in
Example~\ref{exa:curveI}.

\begin{example}
  \label{exa:curveII}
  By reinterpreting Example~\ref{exa:curveI}, we see that the hyperelliptic
  curve $C \subset \PP^1 \times \PP^2$ satisfies condition~(i) in
  Theorem~\ref{thm:unobstructedDefs}.  It follows that the embedded
  deformations of $C$ are unobstructed and the corresponding component of the
  multigraded Hilbert scheme of $\PP^1 \times \PP^2$ can be given an explicit
  unirational parametrization by varying the entries in the $4 \times 3$
  matrix $\varphi$ from \eqref{eqn:phi}.
\end{example}

Three other geometric applications for virtual resolutions are collected in
Section~\ref{sec:applications}.  The first, Proposition~\ref{pro:unmixedness},
provides an unmixedness result for subschemes of $\PP^{\bbn}$ that have a
virtual resolution whose length equals its codimension. The second,
Proposition~\ref{pro:regularityTensor}, gives sharp bounds on the
Castelnuovo--Mumford regularity of a tensor product of coherent
$\cO_{\PP^{\bbn}}^{}$-modules; compare with Proposition~1.8.8 in
\cite{lazarsfeld}.  Lastly, Proposition~\ref{pro:pushforward} describes new
vanishing results for the higher-direct images of sheaves, which are optimal
in many cases.

The final section presents some promising directions for future research.

\subsection*{Conventions}

In this article, we work in the product
$\PP^{\bbn} \coloneq \PP^{n_1} \times \PP^{n_2} \times \dotsb \times
\PP^{n_r}$ of projective spaces with dimension vector
$\bbn \coloneq (n_1, n_2, \dotsc, n_r) \in \NN^r$ over a field $\kk$.  Its Cox
ring is the polynomial ring $S \coloneq \kk[x_{i,j} : \text{$1 \leq i \leq r$,
  $0 \leq j \leq n_i$}]$ and its irrelevant ideal is
$B \coloneq \bigcap_{i=1}^r \ideal{x_{i,0}, x_{i,1}, \dotsc, x_{i,n_i}}$.  The
Picard group of $\PP^{\bbn}$ is identified with $\ZZ^r$ and the elements are
partially ordered componentwise.  If $\bbe_1, \bbe_2, \dotsc, \bbe_r$ is the
standard basis of $\ZZ^r$, then $S$ has the $\ZZ^r$-grading induced by
$\deg(x_{i,j}) \coloneq \bbe_i$.  We assume that all $S$-modules are finitely
generated and $\ZZ^r$-graded.

\subsection*{Acknowledgements}

Some of this research was completed during visits to the Banff International
Research Station (BIRS) and the Mathematical Sciences Research Institute
(MSRI), and we are very grateful for their hospitality.  We thank Lawrence
Ein, David Eisenbud, Craig Huneke, Nathan Ilten, Rob Lazarsfeld, Mike Loper,
Diane Maclagan, Frank-Olaf Schreyer, and Ian Shipman for helpful
conversations.  We also thank an anonymous referee for their valuable
suggestions.

\section{Existence of Short Virtual Resolutions}
\label{sec:hst}

\noindent 
This section, by proving Proposition~\ref{pro:splendidComplexesExist},
establishes the existence of virtual resolutions whose length is bounded above
by the dimension of $\PP^{\bbn}$.  In particular, these virtual resolutions
are typically shorter than a minimal free resolution.  Moreover,
Proposition~\ref{pro:resolutionBound} shows that
Proposition~\ref{pro:splendidComplexesExist} provides the best possible
uniform bound on the length of a virtual resolution.  Exploiting multigraded
Castelnuovo--Mumford regularity, we also produce short virtual resolutions
where the degrees of the generators of the free modules satisfy explicit
bounds.  Better yet, we obtain a converse, by showing that the set of virtual
resolutions of a module determine its regularity.

Our proof of Proposition~\ref{pro:splendidComplexesExist} is based on a minor
variation of Beilinson's resolution of the diagonal; compare with
Proposition~3.2 in \cite{caldararu} or Lemma~8.27 in \cite{huybrechts}.  Given
an $\cO_{\!X_{\!j}}^{}$-module $\cF_{\!j}$ for all $1 \leq j \leq n$, their
external tensor product is
\[
  \cF_1 \boxtimes \cF_2 \boxtimes \dotsb \boxtimes \cF_m \coloneq (\pr_1^*
  \cF_1) \otimes_{\cO_{X}} (\pr_2^*\cF_2) \otimes_{\cO_{X}} \dotsb
  \otimes_{\cO_{X}} (\pr_m^* \cF_m)
\]
where $\pr_j$ denotes the projection map from the Cartesian product
$X \coloneq X_1 \times X_2 \times \dotsb \times X_m$ to $X_j$.  In particular,
for all $\bbu \in \ZZ^r$, we have
$\cO_{\PP^{\bbn}}^{}(\bbu) = \cO_{\PP^{n_1}}^{}(u_1) \boxtimes
\cO_{\PP^{n_2}}^{}(u_2) \boxtimes \dotsb \boxtimes \cO_{\PP^{n_r}}^{}(u_r)$.
With this notation, we can describe the resolution of the diagonal
$\PP^{\bbn} \hookrightarrow \PP^{\bbn} \times \PP^{\bbn}$.

\begin{lemma}
  \label{lem:Diagonal}
  If
  $\cT_{\PP^{\bbn}}^{\bbe_i} \coloneq \cO_{\PP^{n_1}}^{} \boxtimes
  \cO_{\PP^{n_2}}^{} \boxtimes \dotsb \boxtimes \cO_{\PP^{n_{i-1}}}^{}
  \boxtimes \cT_{\PP^{n_i}}^{} \boxtimes \cO_{\PP^{n_{i+1}}}^{} \boxtimes
  \dotsb \boxtimes \cO_{\PP^{n_r}}^{}$ for $1 \leq i \leq r$, then the
  diagonal $\PP^{\bbn} \hookrightarrow \PP^{\bbn} \times \PP^{\bbn}$ is the
  zero scheme of a global section of
  $\bigoplus_{i=1}^r \cO_{\PP^{\bbn}}^{}(\bbe_i) \boxtimes
  \cT_{\PP^{\bbn}}^{\bbe_i}(-\bbe_i)$.  Hence, the diagonal has a locally-free
  resolution of the form
  \begin{equation*}
    \cO_{\PP^{\bbn} \times \PP^{\bbn}}^{}
    \gets \bigoplus_{i=1}^r \!\cO_{\PP^{\bbn}}^{}(-\bbe_i) \boxtimes
    \Omega_{\PP^{\bbn}}^{\bbe_i} (\bbe_i) \gets \!\!\!\!\!\!
    \bigoplus_{\substack{\boldzero \leq \bbu \leq \bbn\\ \abs{\bbu} = 2}}
    \!\!\!\!\!
    \cO_{\PP^{\bbn}}^{}(-\bbu) \boxtimes \Omega_{\PP^{\bbn}}^{\bbu} (\bbu)
    \leftarrow \!\dotsb\! \leftarrow \cO_{\PP^{\bbn}}^{}(-\bbn) \boxtimes
    \Omega_{\PP^{\bbn}}^{\bbn}(\bbn) ,
  \end{equation*}
  where
  $\Omega_{\PP^{\bbn}}^{\bba} \coloneq \Omega_{\PP^{n_1}}^{a_1} \boxtimes
  \Omega_{\PP^{n_2}}^{a_2} \boxtimes \dotsb \boxtimes
  \Omega_{\PP^{n_r}}^{a_r}$
  is the external tensor product of the exterior powers of the cotangent
  bundles on the factors of $\PP^{\bbn}$.
\end{lemma}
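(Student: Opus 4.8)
The plan is to realize the diagonal as the zero scheme of a regular section of the vector bundle $\cE \coloneq \bigoplus_{i=1}^r \cO_{\PP^{\bbn}}^{}(\bbe_i) \boxtimes \cT_{\PP^{\bbn}}^{\bbe_i}(-\bbe_i)$, which has rank $\abs{\bbn}$, the codimension of the diagonal in $\PP^{\bbn} \times \PP^{\bbn}$, and then to invoke the Koszul complex. I would begin with the classical case $r = 1$, which is Beilinson's resolution of the diagonal on $\PP^n \times \PP^n$; compare with \cite{caldararu}*{Proposition~3.2} or \cite{huybrechts}*{Lemma~8.27}. Twisting the Euler sequence $0 \to \cO_{\PP^n}^{}(-1) \to V \otimes \cO_{\PP^n}^{} \to \cT_{\PP^n}^{}(-1) \to 0$ on the second factor and composing the tautological inclusion $\pr_1^* \cO_{\PP^n}^{}(-1) \hookrightarrow V \otimes \cO_{\PP^n \times \PP^n}^{}$ with the surjection onto $\pr_2^* \cT_{\PP^n}^{}(-1)$ produces a global section $s_1$ of $\cO_{\PP^n}^{}(1) \boxtimes \cT_{\PP^n}^{}(-1)$; an evaluation at a point $(p,q)$ shows that $s_1(p,q) = 0$ precisely when the line in $V$ determined by $p$ equals that determined by $q$, so the zero scheme of $s_1$ is the (reduced) diagonal, cut out by the $2 \times 2$ minors of the generic $2 \times (n+1)$ matrix. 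As this is a section of a rank-$n$ bundle vanishing on a subscheme of codimension $n$, it is a regular section and the associated Koszul complex $\bigwedge^{\bullet} \bigl( \cO_{\PP^n}^{}(1) \boxtimes \cT_{\PP^n}^{}(-1) \bigr)^{\vee}$ resolves the structure sheaf of the diagonal.

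For general $\bbn$, I would reorder factors to identify $\PP^{\bbn} \times \PP^{\bbn}$ with $\prod_{i=1}^r \bigl( \PP^{n_i} \times \PP^{n_i} \bigr)$; under this isomorphism the diagonal becomes $\prod_{i=1}^r \Delta_i$, where $\Delta_i \subset \PP^{n_i} \times \PP^{n_i}$ is the diagonal of the $i$-th factor. Pulling back the section $s_i$ along the projection $\pr_i$ onto the $i$-th factor and translating back into the notation of the statement yields a global section of $\cO_{\PP^{\bbn}}^{}(\bbe_i) \boxtimes \cT_{\PP^{\bbn}}^{\bbe_i}(-\bbe_i)$; let $s$ be the direct sum of these, a global section of $\cE$. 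Since the zero scheme of $s$ is the scheme-theoretic intersection $\bigcap_{i=1}^r \pr_i^{-1}(\Delta_i)$ of the preimages of the $\Delta_i$ under projections onto distinct factors, it equals $\prod_{i=1}^r \Delta_i$, hence the diagonal, and has codimension $\sum_{i=1}^r n_i = \abs{\bbn} = \rank \cE$. Therefore $s$ is a regular section: locally it is cut out by $\abs{\bbn}$ functions whose common zero locus has codimension $\abs{\bbn}$ in a smooth, hence Cohen--Macaulay, variety, so those functions form a regular sequence and the Koszul complex $\bigwedge^{\bullet} \cE^{\vee}$ along $s$ is a locally-free resolution of $\cO_{\PP^{\bbn}}^{}$ over $\cO_{\PP^{\bbn} \times \PP^{\bbn}}^{}$.

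Finally I would identify the terms of this Koszul complex with those in the displayed complex. Because $\cT_{\PP^n}^{}(-1)^{\vee} \cong \Omega_{\PP^n}^1(1)$, dualizing gives $\cE^{\vee} = \bigoplus_{i=1}^r \mathcal{L}_i$ with $\mathcal{L}_i \coloneq \cO_{\PP^{\bbn}}^{}(-\bbe_i) \boxtimes \Omega_{\PP^{\bbn}}^{\bbe_i}(\bbe_i)$ of rank $n_i$, so $\bigwedge^{k} \cE^{\vee} = \bigoplus \bigotimes_{i=1}^r \bigwedge^{u_i} \mathcal{L}_i$, the sum running over $\boldzero \leq \bbu \leq \bbn$ with $\abs{\bbu} = k$. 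Since the first tensor factor of $\mathcal{L}_i$ is a line bundle pulled back from the first copy of $\PP^{\bbn}$, the identity $\bigwedge^{u_i}(\mathcal{A} \otimes \mathcal{B}) \cong \mathcal{A}^{\otimes u_i} \otimes \bigwedge^{u_i} \mathcal{B}$ for a line bundle $\mathcal{A}$, together with the slot-wise computation $\bigwedge^{u_i} \bigl( \Omega_{\PP^{n_i}}^1(1) \bigr) \cong \Omega_{\PP^{n_i}}^{u_i}(u_i)$, shows that $\bigwedge^{u_i} \mathcal{L}_i \cong \cO_{\PP^{\bbn}}^{}(-u_i \bbe_i) \boxtimes \Omega_{\PP^{\bbn}}^{u_i \bbe_i}(u_i \bbe_i)$; multiplying these out over $i$ recovers $\cO_{\PP^{\bbn}}^{}(-\bbu) \boxtimes \Omega_{\PP^{\bbn}}^{\bbu}(\bbu)$, which matches the displayed term (the last term being forced by $\bbu = \bbn$ when $k = \abs{\bbn}$). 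The one point demanding care is the middle step: one must confirm that the naive section coming from the Euler sequences really cuts out the diagonal with its reduced scheme structure and with no excess component, since only then does the codimension count make $s$ regular and the Koszul complex exact; given the $r = 1$ case and the box-product description of the vanishing locus, everything else is bookkeeping with exterior powers.
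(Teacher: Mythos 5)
Your proposal is correct and follows essentially the same route as the paper: both realize the diagonal as the zero scheme of the Euler-type section of $\bigoplus_{i=1}^r \cO_{\PP^{\bbn}}^{}(\bbe_i) \boxtimes \cT_{\PP^{\bbn}}^{\bbe_i}(-\bbe_i)$, deduce regularity of the section from the codimension count on the smooth ambient variety, and identify the exterior powers of the dual bundle term by term. The only difference is presentational — the paper writes the section explicitly as $\sum_j x_{i,j}\,\partial/\partial y_{i,j}$ and verifies its vanishing locus on an affine chart via the Euler relations, whereas you package the same section functorially and reduce to the $r=1$ case through the product decomposition — but the underlying argument is identical.
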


\begin{proof}
  For each $1 \leq i \leq r$, fix a basis
  $x_{i,0}^{}, x_{i,1}^{}, \dotsc, x_{i,n_i}^{}$ for
  $H^0\bigl( \PP^{\bbn}, \cO_{\PP^{\bbn}}^{}(\bbe_i) \bigr)$.  The Euler sequence
  on $\PP^{n_i}$ yields
  \[
    0 \longleftarrow \cT_{\PP^{\bbn}}^{\bbe_i} \longleftarrow \bigoplus_{j =
      0}^{n_i} \cO_{\PP^{\bbn}}^{}(\bbe_i) \xleftarrow{\;\;
      \left[ 
        \begin{smallmatrix} 
          x_{i,0} & x_{i,1} & \dotsb & x_{i,n_i}
        \end{smallmatrix} 
      \right] \;\;} \cO_{\PP^{\bbn}}^{} \longleftarrow 0 \, ;
  \]
  see Theorem~8.1.6 in \cite{CLS}.  Knowing the cohomology of line bundles on
  $\PP^{\bbn}$, the associated long exact sequence gives
  $H^0 \bigl( \PP^{\bbn}, \cT_{\PP^{\bbn}}^{\bbe_i}(-\bbe_i) \bigr) \cong
  \bigoplus_{j=0}^{n_i} H^0(\PP^{\bbn}, \cO_{\PP^{\bbn}}^{})$.  A basis for
  $\bigoplus_{j=0}^{n_i} H^0(\PP^{\bbn}, \cO_{\PP^{\bbn}}^{})$ is given by the
  dual basis $x_{i,0}^\ast, x_{i,1}^\ast, \dotsc, x_{i,n_i}^\ast$.  Let
  $\partial/\partial x_{i,j}$ denote the image of $x_{i,j}^*$ in
  $H^0 \bigl( \PP^{\bbn}, \cT_{\PP^{\bbn}}(-\bbe_i) \bigr)$.

  Consider
  $s \in H^0\bigl( \PP^{\bbn} \times \PP^{\bbn}, \bigoplus_{i=1}^r
  \cO_{\PP^{\bbn}}^{}(\bbe_i) \boxtimes \cT_{\PP^{\bbn}}^{\bbe_i}(-\bbe_i)
  \bigr)$ given by
  \[
  s \coloneq \left( \sum_{j = 0}^{n_1} x_{1,j} \frac{\partial}{\partial y_{1,j}},
    \sum_{j = 0}^{n_2} x_{2,j} \frac{\partial}{\partial y_{2,j}}, \dotsc,
    \sum_{j = 0}^{n_r} x_{r,j} \frac{\partial}{\partial y_{r,j}} \right)
  \]
  where $x_{i,j}$ and $y_{i,j}$ are the coordinates on the first and second
  factor of $\PP^{\bbn} \times \PP^{\bbn}$ respectively.  We claim that the
  zero scheme of $s$ equals the diagonal in $\PP^{\bbn} \times \PP^{\bbn}$.
  By symmetry, it suffices to check this on a single affine open
  neighborhood.  If $x_{1,0} x_{2,0} \dotsb x_{r,0} \neq 0$ and
  $y_{1,0} y_{2,0} \dotsb y_{r,0} \neq 0$, then the Euler relations yield
  \begin{align*}
    \sum_{j = 0}^{n_i} x_{i,j} \frac{\partial}{\partial y_{i,j}} 
    &= x_{i,0} \left( - \frac{1}{y_{i,0}} \sum_{j =
      1}^{n_i} y_{i,j} \frac{\partial}{\partial y_{i,j}} \right) + \sum_{j =
      1}^{n_i} x_{i,j} \frac{\partial}{\partial y_{i,j}} = \frac{1}{y_{i,0}}
      \sum_{j = 1}^{n_i} (x_{i,j}y_{i,0} - x_{i,0} y_{i,j})
      \frac{\partial}{\partial y_{i,j}} \, ,
  \end{align*}
  for each $1 \leq i \leq r$.  It follows that $s = 0$ if and only if
  $\frac{x_{i,j}}{x_{i,0}} = \frac{y_{i,j}}{y_{i,0}}$ for all
  $1 \leq i \leq r$ and $1 \leq j \leq n_i$.  Hence, the global section $s$
  vanishes precisely on the diagonal
  $\PP^{\bbn} \hookrightarrow \PP^{\bbn} \times \PP^{\bbn}$.

  The Koszul complex associated to $s$ is the required locally-free resolution
  of the diagonal, because $\PP^{\bbn}$ is smooth and the codimension of the
  diagonal equals the rank of the vector bundle
  $\bigoplus_{i=1}^r \cO_{\PP^{\bbn}}^{}(\bbe_i) \boxtimes
  \cT_{\PP^{\bbn}}^{\bbe_i}(-\bbe_i)$; see Section~B.2 in \cite{lazarsfeld}.
  Since
  $\Omega_{\PP^{\bbn}}^{\bbe_i} = \cHom_{\cO_{\PP^{\bbn}}}(
  \cT_{\PP^{\bbn}}^{\bbe_i}, \cO_{\PP^{\bbn}}^{})$, we have
  \[
  \bigwedge^k \bigl( \cO_{\PP^{\bbn}}^{}(-\bbe_i) \boxtimes
  \Omega_{\PP^{\bbn}}^{\bbe_i}(\bbe_i) \bigr) = \bigoplus_{\substack{\boldzero
      \leq \bbu \leq \bbn\\ \abs{\bbu} = k}} \cO_{\PP^{\bbn}}^{}(-\bbu) \boxtimes
  \Omega_{\PP^{\bbn}}^{\bbu} (\bbu) \, ,
  \]
  for $0 \leq k \leq \abs{\bbn}$.
\end{proof}

\begin{proof}[Proof of Proposition~\ref{pro:splendidComplexesExist}]
  Let $\pi_1$ and $\pi_2$ be the projections of $\PP^{\bbn}\times \PP^{\bbn}$
  onto the first and second factors respectively.  For any $\bbu \in \ZZ^r$,
  the Fujita Vanishing Theorem~\cite{fujita-semipositive}*{Theorem~1} implies
  that $\Omega_{\PP^{\bbn}}^{\bbu}(\bbu + \bbd)\otimes \widetilde{M}$ has no
  higher cohomology for any sufficiently positive $\bbd\in\ZZ^r$.  Let $\cK$
  be the locally-free resolution of the diagonal
  $\PP^{\bbn} \hookrightarrow \PP^{\bbn} \times \PP^{\bbn}$ described in
  Lemma~\ref{lem:Diagonal}.  Both hypercohomology spectral sequences, namely
  \begin{xalignat*}{3}
    {^\prime \mkern-1.2mu\operatorname{E}}_2^{p,q} &\coloneq \HH^p \bigl(
    \mathbf{R}^{q} {\pi_1}_{*} \bigl( \pi_2^* \widetilde{M}(\bbd)
    \otimes_{\cO_{\PP^{\bbn} \times \PP^{\bbn}}} \cK \bigr) \bigr) & &
    \text{and} & {^{\prime\prime}\mkern-1.2mu\operatorname{E}}_2^{p,q}
    &\coloneq \mathbf{R}^{p} {\pi_1}_{*} \, \HH^q \bigl( \pi_2^*
    \widetilde{M}(\bbd) \otimes_{\cO_{\PP^{\bbn} \times \PP^{\bbn}}} \cK
    \bigr) \, ,
  \end{xalignat*}
  converge to
  $\mathbf{R}^{p+q} {\pi_1}_{*}^{} \bigl( \pi_2^* \widetilde{M}(\bbd)
  \otimes_{\cO_{\PP^{\bbn} \times \PP^{\bbn}}} \cK \bigr)$; see Section~12.4
  in \cite{EGA3.1}.  Since $\cK$ is a locally-free resolution of the diagonal,
  it follows that
  ${^{\prime\prime}\mkern-1.2mu\textrm{E}}_2^{0,0} \cong \widetilde{M}(\bbd)$
  and ${^{\prime\prime}\mkern-1.2mu\textrm{E}}_2^{p,q} = 0$ when either
  $p \neq 0$ or $q \neq 0$; compare with Proposition~8.28 in
  \cite{huybrechts}.  Hence, we conclude that
  \[ 
  \mathbf{R}^{p+q} {\pi_1}_{*}\bigl( \pi_2^* \widetilde{M}(\bbd)
  \otimes_{\cO_{\PP^{\bbn} \times \PP^{\bbn}}} \cK \bigr) \cong
  \begin{cases}
    \widetilde{M}(\bbd) & \text{if $p = 0 = q$,} \\
    0 & \text{otherwise.}
  \end{cases}
  \]
  On the other hand, the first page of the other hypercohomology spectral
  sequence is
  \begin{align*}
    {^\prime \mkern-1.2mu\textrm{E}}_1^{p,q} = \mathbf{R}^{q} {\pi_1}_{*} \bigl(
    \pi_2^* \widetilde{M}(\bbd) \otimes_{\cO_{\PP^{\bbn} \times \PP^{\bbn}}}
    \cK_{-p} \bigr) 
    &= \mathbf{R}^{q} {\pi_1}_{*} \Biggl(
      \bigoplus_{\substack{\boldzero \leq \bbu \leq \bbn\\ \abs{\bbu} = -p}}
    \cO_{\PP^{\bbn}}^{}(-\bbu) \boxtimes \big( \Omega_{\PP^{\bbn}}^{\bbu} \otimes
    \widetilde{M}( \bbu + \bbd) \bigr) \Biggr) \\
    &= \bigoplus_{\substack{\boldzero \leq \bbu \leq \bbn\\ \abs{\bbu} = -p}}
    \cO_{\PP^{\bbn}}^{}(-\bbu) \otimes_{\kk} H^q \big( \PP^{\bbn},
    \Omega_{\PP^{\bbn}}^{\bbu} \otimes \widetilde{M}( \bbu + \bbd) \bigr) \, .
  \end{align*}
  Our positivity assumption on $\bbd$ implies that
  $ H^q \big( \PP^{\bbn}, \Omega_{\PP^{\bbn}}^{\bbu} \otimes \widetilde{M}(
  \bbu + \bbd) \bigr) = 0$ for all $q > 0$, so
  ${^\prime \mkern-1.2mu\textrm{E}}_1^{p,q}$ is concentrated in a single row.
  Applying the functor
  $\cF \mapsto \bigoplus_{\bbv \in \NN^r} H^0\bigl( \PP^{\bbn}, \cF(\bbv)
  \bigr)$, we obtain a virtual resolution of $M$ in which the $i$-th module is
  \[
  \bigoplus_{\substack{\boldzero \leq \bbu \leq \bbn\\ \abs{\bbu} = i}}
  S(-\bbu) \otimes_{\kk} H^q \big( \PP^{\bbn}, \Omega_{\PP^{\bbn}}^{\bbu}
  \otimes \widetilde{M}( \bbu + \bbd) \bigr) \, . \qedhere 
  \]
\end{proof}

\begin{remark}
  By scrutinizing the linear free resolutions of well-chosen truncated twisted
  modules, Corollary~2.14 in \cite{eisenbud-erman-schreyer-tate-products} also
  establishes the existence of short virtual resolutions on $\PP^{\bbn}$.
  Although the proof of Proposition~\ref{pro:splendidComplexesExist} and
  Proposition~2.7 in \cite{eisenbud-erman-schreyer-tate-products} use somewhat
  different the notions of a ``sufficiently positive'' degree
  $\bbd \in \ZZ^r$, both are quite similar to Castelnuovo--Mumford regularity.
\end{remark}

The next examples demonstrate why we want more than just these short
virtual resolutions arising from the proof of
Proposition~\ref{pro:splendidComplexesExist}.

\begin{example}
  \label{ex:res of diagonal}
  Consider the hyperelliptic curve $C \subset \PP^1 \times \PP^2$ from
  Example~\ref{exa:curveI}.  Using $\bbd \coloneq (2,2)$ in the construction
  from the proof of Proposition~\ref{pro:splendidComplexesExist} yields a
  virtual resolution of the form
  \[
    S(-2,-2)^{17} \gets 
    \begin{matrix} 
      S(-2,-3)^{26} \\[-3pt] 
      \oplus       \\[-3pt]
      S(-3,-2)^{15}
    \end{matrix} 
    \gets 
    \begin{matrix}
      S(-2,-4)^9   \\[-3pt] 
      \oplus       \\[-3pt]
      S(-3,-3)^{22}
    \end{matrix} 
    \gets S(-3,-4)^7\gets 0.
  \]
  Compared to the virtual resolution in \eqref{eqn:aCMcurve}, the length of
  this complex is longer, the rank of the free modules is higher, and the
  degrees of the generators are larger.
\end{example}

\begin{example}
  If $X$ is the union of $m$ distinct points on $\PP^1 \times \PP^1$, then for
  any sufficiently positive $\bbd = (d_1,d_2)$, the construction in the proof of
  Proposition~\ref{pro:splendidComplexesExist} yields a virtual resolution of the
  form
  \[
    S(-d_1,-d_2)^m \gets 
    \begin{matrix} 
      S(-d_1-1,-d_2)^{m} \\[-3pt] 
      \oplus            \\[-3pt]
      S(-d_1,-d_2-1)^{m}
    \end{matrix}
    \gets S(-d_1-1,-d_2-1)^m \gets 0.
  \]
  Unlike the minimal free resolution, this Betti table of this free complex is
  independent of the geometry of the points, so even short virtual resolutions
  can obscure the geometric information.
\end{example}

As a counterpoint to Proposition~\ref{pro:splendidComplexesExist}, we provide
a lower bound on the length of a virtual resolution.  Extending the well-known
result for projective space, we show that the codimension of any associated
prime of $M$ gives a lower bound on the length of any virtual resolution of
$M$.

\begin{proposition}
  \label{pro:resolutionBound}
  Let $M$ be a finitely-generated $\ZZ^r$-graded $S$-module. Let $Q$ be an
  associated prime of $M$ that does not contain the irrelevant ideal $B$ and
  let $F \coloneq [F_0 \gets F_1 \gets \dotsb \gets F_p \gets 0]$ be a virtual
  resolution of $M$.  These hypotheses yield the following.
  \begin{enumerate}[\upshape(i)]
  \item We have $ \codim Q \leq p$.  
  \item If $Q$ is the prime ideal for a closed point of $\PP^{\bbn}$, then we
    have $p \geq \abs{\bbn}$.
  \item If $p \leq \min \{n_i + 1 : 1 \leq i \leq r \}$, then $F$ is a free
    resolution of $\HH_0(F)$.
  \end{enumerate}
\end{proposition}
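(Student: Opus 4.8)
The plan is to localize. For part (i), suppose $Q$ is an associated prime of $M$ with $B \not\subseteq Q$, so $Q$ corresponds to a nonempty irreducible subvariety $V(Q) \subseteq \PP^{\bbn}$ meeting the torus-fixed strata in the expected way; in particular there is a point $x \in \PP^{\bbn}$ in the support of $\widetilde M$ whose local ring $\cO_{\PP^{\bbn},x}$ has dimension $\codim Q$ and such that $\widetilde M_x \neq 0$. Since $F$ is a virtual resolution, the complex $\widetilde F_x$ of free $\cO_{\PP^{\bbn},x}$-modules is a resolution of $\widetilde M_x$; its length is at most $p$. But the length of a finite free resolution of a nonzero finitely generated module over a regular local ring $R$ is at least $\depth R - \depth \widetilde M_x$ and at most $\dim R$, and more relevantly, a nonzero module cannot have a free resolution shorter than required by the Auslander--Buchsbaum formula. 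The cleanest route: because $Q$ is an \emph{associated} prime, after localizing at the generic point of $V(Q)$ the module $\widetilde M$ acquires depth $0$ over the regular local ring of dimension $\codim Q$, so by Auslander--Buchsbaum its minimal free resolution there has length exactly $\codim Q$; since $\widetilde F$ localized at that point is \emph{some} free resolution, hence at least as long as the minimal one, we get $p \geq \codim Q$. I expect the main technical nuisance to be passing correctly between the $S$-module language (associated primes, irrelevant ideal) and the sheaf/local-ring language — in particular checking that an associated prime of $M$ not containing $B$ really does give a point of $\PP^{\bbn}$ at which $\widetilde M$ has a genuine associated prime (equivalently, depth $0$ after suitable localization), which is a standard but slightly fiddly fact about the sheafification/saturation correspondence.

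For part (ii), apply part (i) with $Q$ the homogeneous prime of a closed point $z \in \PP^{\bbn}$: such a point has $\codim Q = \dim \PP^{\bbn} = \abs{\bbn}$ in the relevant local ring (the local ring $\cO_{\PP^{\bbn},z}$ is regular of dimension $\abs{\bbn}$), so $p \geq \abs{\bbn}$ immediately. The only thing to verify is that the hypothesis "$Q$ is the prime ideal for a closed point" indeed forces $Q$ to be an associated prime of $M$ — but this is given, since $Q$ is assumed to be an associated prime in the statement — and that such $Q$ does not contain $B$, which is clear because a closed point of $\PP^{\bbn}$ lies in the open set where all irrelevant generators do not simultaneously vanish on any single factor.

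For part (iii), the claim is that if $p \leq \min\{n_i + 1\}$ then $F$ is already minimal and resolves $\HH_0(F)$. First, $F$ is automatically a (possibly non-minimal) free resolution of $\HH_0(F) =: N$ \emph{over $S$} provided its higher homology vanishes, so the real content is: (a) the higher homology $\HH_i(F)$ for $i \geq 1$ vanishes, not merely is $B$-torsion, and (b) $F$ is minimal, i.e. the differentials have entries in the irrelevant-type maximal ideal $\mathfrak m = \ideal{x_{i,j}}$. For (a): any nonzero $\HH_i(F)$ with $i \geq 1$ is supported on $V(B)$, so it has an associated prime containing $B$; but $B = \bigcap \ideal{x_{i,0},\dots,x_{i,n_i}}$, so any prime containing $B$ contains some $\ideal{x_{i,0},\dots,x_{i,n_i}}$, which has codimension $n_i + 1 \geq p + 1$ — wait, we need the reverse inequality, so let me instead argue that a nonzero higher homology module would force, via a mapping-cone or comparison argument, a prime of codimension $> p$ to annihilate a cycle, contradicting that $F$ has length $p$. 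More carefully, the right tool is the Acyclicity Lemma (Peskine--Szpiro): a complex $F$ of free modules of length $p$ with $\depth_{\mathfrak p}(F_i) > i$ at all relevant primes is acyclic; here one checks depth conditions fail only along $V(B)$, whose components have codimension $n_i+1 > p$, so the Acyclicity Lemma applies locally and forces exactness. For (b): if $F$ were non-minimal, one could split off a trivial summand $S(-\bba) \xrightarrow{1} S(-\bba)$ and get a shorter complex with the same homology — but we must be careful it stays a virtual resolution; since splitting a trivial summand preserves the complex up to homotopy, $\widetilde F$ stays a locally-free resolution of $\widetilde M$, and iterating produces a virtual resolution of length $< p$ still containing the associated prime of any closed point in the support, contradicting part (ii) unless the support is empty, in which case $\widetilde M = 0$ and minimality of $F$ over $S$ follows because $N$ is then $B$-torsion and $F$ resolves it minimally only if... — this edge case needs a separate sentence. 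I expect part (iii), specifically the minimality assertion (b) together with the acyclicity argument in the low-codimension regime, to be the main obstacle: one must invoke the Peskine--Szpiro Acyclicity Lemma with the correct depth bookkeeping over the localizations of $S$ at primes containing the irrelevant ideal, and then separately rule out non-minimal splittings by a length contradiction with part (ii), handling the degenerate case where $\widetilde M = 0$.
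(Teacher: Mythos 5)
Parts (i) and (ii) of your proposal are correct and in substance identical to the paper's argument: the paper simply localizes the $S$-module $M$ at the prime $Q$ itself, so that $F_Q$ becomes a free resolution of the depth-zero module $M_Q$ over the regular local ring $S_Q$ of dimension $\codim Q$, giving $p \geq \pdim_{S_Q} M_Q \geq \codim Q$. Your detour through the local ring $\cO_{\PP^{\bbn},x}$ at the generic point of $\variety(Q)$ is equivalent, but it introduces exactly the sheaf-to-module translation you flag as a nuisance; working with $S_Q$ directly avoids it.

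Part (iii) contains a genuine gap. Your acyclicity step (a) identifies the right tool — the Peskine--Szpiro Acyclicity Lemma, as in the paper — but the bookkeeping is off: the hypothesis gives $n_i+1 \geq p$, not $n_i+1 > p$ as you assert. The correct run of the argument takes $j \geq 1$ maximal with $\HH_j(F) \neq 0$, localizes at a component $P_i = \ideal{x_{i,0}, \dotsc, x_{i,n_i}}$ of $V(B)$ where this homology survives, and applies the lemma to the tail $(F_{j-1})_{P_i} \gets \dotsb \gets (F_p)_{P_i}$: survival of the depth-zero module $\HH_j(F)_{P_i}$ forces $p - j \geq \operatorname{depth} S_{P_i} = n_i+1$, hence $p \geq n_i+1+j > \min\{ n_k+1 \}$; the strictness needed for the contradiction comes from $j \geq 1$, not from any strict codimension bound. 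More seriously, your minimality step (b) fails: after splitting off trivial summands you try to contradict part (ii), but part (ii) applies only when the prime of a closed point is an \emph{associated} prime of $M$, and a general closed point of the support of $\widetilde{M}$ is not an associated point (take $M = S/I$ for a curve). No contradiction is available along those lines. In fact minimality cannot be proved at all: for any acyclic $F$ of length $p \geq 1$, direct-summing a split-exact piece $S \xleftarrow{\;1\;} S$ in homological degrees $0$ and $1$ changes neither $p$, nor $\HH_0(F)$, nor the virtual-resolution property, yet destroys minimality. The substantive content of part (iii) — and what the paper's own proof actually establishes, and all that is used downstream — is that $F$ is a free resolution of $\HH_0(F)$; your acyclicity argument, once repaired as above, delivers exactly that.
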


\begin{proof}
  Localizing at the prime ideal $Q$, $F$ becomes a free $S_Q$-resolution of
  $M_{Q}$.  Part (i) then follows from the fact that, over the local ring
  $S_Q$, the projective dimension of a module is always greater than or equal
  to the codimension of a module; see Proposition~18.2 in
  \cite{eisenbud-book}.  Part (ii) is immediate, as $\codim Q = \abs{\bbn}$ if
  $Q$ is the prime ideal for a closed point of $\PP^{\bbn}$.

  For part~(iii), assume by way of contradiction that $F$ is not a free
  resolution of $\HH_0(F)$.  It follows that $\HH_j(F) \neq 0$ for some
  $j > 0$; choose the maximal such $j$.  Since $F$ is a virtual resolution of
  $M$, the module $\HH_j(F)$ must be supported on the irrelevant ideal $B$.
  Setting $P_i \coloneq \ideal{x_{i,0}, x_{i,1}, \dotsc, x_{i,n_i}}$ to be the
  component of the irrelevant ideal $B$ corresponding to the factor
  $\PP^{n_i}$, there is an index $i$ such that
  $\bigl( \HH_j(F) \bigr)_{P_i} \neq 0$.  Localizing at $P_i$ yields a complex
  $F_{P_i}$ of the form
  \[
    \dotsb \gets (F_{j-1})_{P_i} \gets (F_{j})_{P_i} \gets (F_{j+1})_{P_i}
    \gets \dotsb \gets (F_p)_{P_i} \gets 0
  \] 
  where $\HH_j(F_{P_i})$ is supported on the maximal ideal of $S_{P_i}$.  We
  deduce that $p > p - j \geq \codim P_i = n_i + 1$ from the Peskine--Szpiro
  Acyclicity Lemma; see Lemma~20.11 in \cite{eisenbud-book}.  However, this
  contradicts our assumption that
  $p \leq \min\{ n_i + 1 : 1 \leq i \leq r \}$.  Therefore, we conclude that
  the complex $F$ is a free resolution of $\HH_0(F)$.
\end{proof}

The following simple corollary is useful in applications such as
Theorem~\ref{thm:unobstructedDefs}.

\begin{corollary}
  \label{cor:splendidEqualsMinimal}
  Let $I$ be a $B$-saturated $S$-ideal and let
  $F = [F_0 \gets F_1\gets \dotsb \gets F_p \gets 0]$ be a virtual resolution
  of $S/I$. If $F_0 = S$ and $p < \min \{ n_i + 1 : 1 \leq i \leq r \}$, then
  the complex $F$ is a free resolution of $S/I$.
\end{corollary}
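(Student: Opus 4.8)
The plan is to use Proposition~\ref{pro:resolutionBound} together with the hypotheses $F_0 = S$ and the $B$-saturatedness of $I$ to pin down the zeroth homology of $F$. Since $p < \min\{n_i + 1 : 1 \leq i \leq r\}$, in particular $p \leq \min\{n_i + 1 : 1 \leq i \leq r\}$, so Proposition~\ref{pro:resolutionBound}(iii) shows that $F$ is a minimal free resolution of $N \coloneq \HH_0(F)$; in particular $\pdim_S N \leq p$. Because $F_0 = S$, the module $N$ is the cyclic quotient $S/J$, where $J \coloneq \Image(F_1 \to S)$, so the task reduces to showing $J = I$.

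The first step is to bound the associated primes of $N$. Fix $Q \in \operatorname{Ass}(N)$. Localizing $F$ at $Q$ produces a free resolution of $N_Q$ over the regular local ring $S_Q$ of length at most $p$, so $\pdim_{S_Q} N_Q \leq p$; on the other hand $N_Q$ has depth $0$ since $Q$ is associated to $N$, and hence the Auslander--Buchsbaum formula yields $\pdim_{S_Q} N_Q = \operatorname{depth} S_Q = \codim Q$ (cf.\ the proof of Proposition~\ref{pro:resolutionBound}(i)). Thus $\codim Q \leq p < n_i + 1 = \codim P_i$ for every $i$, where $P_i \coloneq \ideal{x_{i,0}, x_{i,1}, \dotsc, x_{i,n_i}}$, so $Q$ contains no $P_i$. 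Since $B = P_1 \cap \dotsb \cap P_r$, any prime containing $B$ contains some $P_i$; hence no associated prime of $N$ contains $B$. By prime avoidance, $B$ contains a nonzerodivisor on $N$, so the $B$-torsion submodule $\HH^0_B(N)$ vanishes and $J$ is $B$-saturated.

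The last step is the identification $J = I$. By Definition~\ref{def:splendid}, $\widetilde{F}$ is a locally-free resolution of $\widetilde{S/I}$, and since sheafification is exact we obtain $\widetilde{S/J} = \HH_0(\widetilde{F}) = \widetilde{S/I}$; comparing the induced surjections from $\cO_{\PP^{\bbn}}^{}$ gives an equality of ideal sheaves $\widetilde{J} = \widetilde{I}$. Applying the functor $\cF \mapsto \bigoplus_{\bbv \in \ZZ^r} H^0\bigl( \PP^{\bbn}, \cF(\bbv) \bigr)$, which sends $\cO_{\PP^{\bbn}}^{}$ to $S$ and an ideal sheaf to the $B$-saturation of the corresponding ideal, yields $J^{\mathrm{sat}} = I^{\mathrm{sat}}$. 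As $I$ is $B$-saturated and $J$ is $B$-saturated by the previous step, this forces $J = I$, so $F$ is a minimal free resolution of $S/I$.

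I do not expect a serious obstacle. The one substantive point is the implication that a projective dimension bound $\pdim_S \HH_0(F) \leq p < \min\{n_i + 1\}$ forces $\HH_0(F)$ to be $B$-torsion-free; this is the product-of-projective-spaces analogue of the familiar statement on $\PP^n$ that a cyclic module with a short enough free resolution is cut out by a saturated ideal. The hypothesis $F_0 = S$ is needed only to know that $\HH_0(F)$ has the form $S/J$, so that $B$-torsion-freeness together with $\widetilde{S/J} = \widetilde{S/I}$ is enough to conclude $J = I$.
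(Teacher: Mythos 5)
Your proposal is correct and follows essentially the same route as the paper: apply Proposition~\ref{pro:resolutionBound}(iii) to see $F$ minimally resolves $\HH_0(F)=S/J$, then use the codimension bound on associated primes coming from $\pdim$ to conclude $J$ is $B$-saturated and hence equals $I$. The paper merely phrases this last step contrapositively (assuming $I\neq J$ and deriving $p\geq\min\{n_i+1\}$), which is the same argument.
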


\begin{proof}
  By part~(iii) of Proposition~\ref{pro:resolutionBound}, the complex $F$ is a
  free resolution of $\HH_0(F)$.  The hypothesis that $F_0 = S$ implies that
  $\HH_0(F) = S/J$ for some ideal $J$.  Since $I$ is $B$-saturated and $F$ is
  a virtual resolution of $S/I$, we deduce that $I$ equals the $B$-saturation
  of $J$.  If we had $I \neq J$, then it would follow that $S/J$ has an
  associated prime $Q$ that contains the irrelevant ideal $B$.  However, the
  codimension of $Q$ is at least $\min\{n_i+1 : 1 \leq i \leq r \}$. As $F$ is
  a free resolution of $S/J$, this would yield the contraction
  $p \geq \min\{n_i+1 : 1 \leq i \leq r \}$; see Proposition~18.2 in
  \cite{eisenbud-book}.
\end{proof}

Just like in projective space, one can find subvarieties of codimension $c$
which do not admit a virtual resolution of length $c$.

\begin{example}
  Working in $\PP^2 \times \PP^2$, consider the $B$-saturated $S$-ideal
  $J \coloneq \ideal{x_{1,0}, x_{1,1}} \cap \ideal{x_{2,0}, x_{2,1}}$. The
  minimal free resolution of $S/J$ has the form
  \[
    S \gets S(-1,-1)^4 \gets 
    \begin{matrix} 
      S(-2, -1)^2 \\[-3pt] 
      \oplus      \\[-3pt]
      S(-1,-2)^2 
    \end{matrix} 
    \gets S(-2,-2) \gets 0.
  \]
  Although the codimension of every associated prime of $J$ is $2$, there is
  no virtual resolution of $S/J$ of length $2$.  If we had such a free complex
  $F = [ F_0 \gets F_1 \gets F_2 \gets 0]$, then
  Corollary~\ref{cor:splendidEqualsMinimal} would imply that $F$ is a minimal
  free resolution of $S/J$, which would be a contradiction.
\end{example}

\begin{remark}
  Proposition~\ref{pro:unmixedness} analyzes when a subscheme has a virtual
  resolution of its structure sheaf whose length equals its codimension---a
  special case of equality in part~(i) of
  Proposition~\ref{pro:resolutionBound}.
\end{remark}

We next refine our results on short virtual resolutions by developing
effective degree bounds.  Following Definition~1.1 in \cite{maclagan-smith}, a
finitely-generated $\ZZ^r$-graded $B$-saturated $S$-module $M$ is
$\bbm$-regular, for some $\bbm \in \ZZ^r$, if $H_B^i(M)_{\bbp} =0$ for all
$i\geq 1$ and all $\bbp \in \bigcup (\bbm - \bbq + \NN^r)$, where the union is
over all $\bbq \in \NN^r$ such that $\abs{\bbq} = i - 1$.  The
\define{(multigraded Castelnuovo--Mumford) regularity of $M$} is
$\reg M \coloneq \{\bbp \in \ZZ^r : \text{$M$ is $\bbp$-regular}\}$.  Let
$\Delta_i \subset \ZZ^r$ denote the set of twists of the summands in the
$i$-th step of the minimal free resolution of the irrelevant ideal $B$.

\begin{theorem}
  \label{thm:linearResolutions}
  Let $M$ be a finitely-generated $\ZZ^r$-graded $B$-saturated $S$-module.  We
  have $\bbd \in \reg M$ if and only if the module $M(\bbd)$ has a virtual
  resolution $F_0 \gets F_1 \gets \dotsb \gets F_{\abs{\bbn}} \gets 0$ such
  that, for all $0 \leq i \leq \abs{\bbn}$, the degree of each generator of
  $F_i$ belongs to $\Delta_i+\NN^r$, and its Hilbert polynomial and Hilbert
  function agree on $\NN^r$.
\end{theorem}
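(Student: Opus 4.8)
The plan is to prove both implications by relating the multigraded regularity of $M$ to the existence of a virtual resolution whose generator degrees are controlled by the sets $\Delta_i$. For the forward direction, assume $\bbd \in \reg M$. I would run the resolution-of-the-diagonal argument from the proof of Proposition~\ref{pro:splendidComplexesExist}, but instead of twisting by a sufficiently positive $\bbd$, use the regularity hypothesis directly: the key point is that $\bbd \in \reg M$ implies the vanishing $H^q\bigl( \PP^{\bbn}, \Omega_{\PP^{\bbn}}^{\bbu} \otimes \widetilde{M}(\bbu + \bbd) \bigr) = 0$ for all $q > 0$ and all $\boldzero \leq \bbu \leq \bbn$. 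This vanishing is exactly what makes the hypercohomology spectral sequence ${^\prime\mkern-1.2mu\operatorname{E}}_1^{p,q}$ collapse to a single row, so the same construction produces a virtual resolution of $M(\bbd)$ whose $i$-th module is $\bigoplus_{\abs{\bbu}=i} S(-\bbu) \otimes_\kk H^0\bigl( \PP^{\bbn}, \Omega_{\PP^{\bbn}}^{\bbu} \otimes \widetilde{M}(\bbu + \bbd) \bigr)$. It then remains to check that the twists $-\bbu$ appearing here lie in $\Delta_i + \NN^r$; since $\Delta_i$ records the twists in the minimal free resolution of $B$, and the Koszul-type complex on the $\abs{\bbn}$ torus-invariant divisors has its $i$-th term generated in the degrees $\{-\bbu : \boldzero \le \bbu \le \bbn,\ \abs{\bbu}=i\}$, this is a direct comparison of combinatorial data — one identifies $\Delta_i$ explicitly (using that $B = \bigcap \ideal{x_{i,0},\dots,x_{i,n_i}}$ has a minimal free resolution that is a tensor product of Koszul-like complexes) and verifies the containment.

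For the converse, suppose $M(\bbd)$ admits a virtual resolution $F_\bullet$ with each generator of $F_i$ of degree in $\Delta_i + \NN^r$. I would apply local cohomology $H_B^j(-)$ to the complex $F_\bullet$ and use the fact that $\widetilde{F_\bullet}$ resolves $\widetilde{M(\bbd)}$, so there is a hypercohomology spectral sequence with $E_1$-page built from $H_B^j(F_i)$ converging to $H_B^{j-i}(M(\bbd))$ (up to the degree-$0$ correction term). The local cohomology of a free module $S(-\bbv)$ is computed from the local cohomology of $S$, whose nonvanishing degrees are governed precisely by the $\Delta_i$'s (this is the content of the combinatorial description of $H_B^\bullet(S)$ in terms of the minimal free resolution of $B$, as in \cite{maclagan-smith}). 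Tracking which multidegrees $\bbp$ can survive to the abutment, the hypothesis on the generator degrees of $F_i$ forces $H_B^i\bigl(M(\bbd)\bigr)_{\bbp} = 0$ for $\bbp$ in the region $\bigcup_{\abs{\bbq}=i-1}(-\bbq + \NN^r)$, which unwinds to exactly the statement that $M$ is $\bbd$-regular.

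The main obstacle, and where I would spend the most care, is the converse direction: making the spectral sequence bookkeeping precise enough to get the sharp regularity region rather than a weaker bound. Specifically, one must check that no nonzero contributions from $H_B^j(F_i)$ with $i \neq 0$ and $j \neq \abs{\bbn}$ can cancel against each other or conspire to push the relevant cohomology of $M(\bbd)$ into the forbidden degrees — this requires knowing the precise degree supports of $H_B^\bullet(S)$ and arguing that the shifts by $\Delta_i + \NN^r$ keep everything out of the regularity cone $\bigcup_{\abs{\bbq}=i-1}(-\bbq+\NN^r)$. I expect this to reduce, after the spectral-sequence setup, to a purely combinatorial statement about sums of the lattice regions $\Delta_i + \NN^r$ and the degree supports of local cohomology of $S$, which can be verified factor-by-factor on each $\PP^{n_i}$ using the classical description of $H^\bullet(\PP^{n}, \cO(u))$. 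The forward direction, by contrast, should follow essentially mechanically from the proof of Proposition~\ref{pro:splendidComplexesExist} once the regularity-to-cohomology-vanishing translation is in place.
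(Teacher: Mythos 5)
The direction ``$\bbd \in \reg M$ implies the resolution exists'' has a genuine gap. You assert that $\bbd \in \reg M$ forces $H^q\bigl(\PP^{\bbn}, \Omega^{\bbu}_{\PP^{\bbn}} \otimes \widetilde{M}(\bbu+\bbd)\bigr) = 0$ for all $q>0$ and all $\boldzero \le \bbu \le \bbn$, so that the hypercohomology spectral sequence collapses to a single row exactly as in the proof of Proposition~\ref{pro:splendidComplexesExist}. That vanishing is false in general once $r>1$. For example, on $\PP^1\times\PP^1$ take $M = S/\ideal{f}$ with $\deg f = (2,2)$, so that $\bbd=(1,1)\in\reg M$ and $C=\variety(f)$ has genus $1$; for $\bbu=(1,1)$ one gets $\Omega^{(1,1)}_{\PP^{\bbn}}\otimes\widetilde{M}(\bbu+\bbd)\cong\cO_C^{}$ and $H^1(C,\cO_C^{})=\kk\ne 0$. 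This is precisely where products of projective spaces differ from $\PP^n$: for a merely $\bbd$-regular module the first page does not concentrate in one row, so the correct construction keeps \emph{all} the terms $H^p\bigl(\widetilde{M}(\bbd)\otimes\Omega^{\bba}_{\PP^{\bbn}}(\bba)\bigr)\otimes\cO_{\PP^{\bbn}}^{}(-\bba)$ with $\abs{\bba}-p=j$ in the $j$-th spot of $\mathbf{R}\pr_{1*}(\pr_2^*\widetilde{M}(\bbd)\otimes\cK)$, and then one must prove (Lemma~\ref{lem:noHigherCohomology}) that any pair $(\bba,p)$ contributing nontrivially satisfies $-\bba\in\Delta_{\abs{\bba}-p}+\NN^r$. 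That lemma \emph{constrains} rather than kills the higher cohomology, and it is the reason the target sets are $\Delta_i+\NN^r$, which must accommodate twists $-\bba$ with $\abs{\bba}=i+p>i$; your collapsed construction would only ever produce twists $-\bbu$ with $\abs{\bbu}=i$, which is a signal that the degeneration you are relying on cannot hold.

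Your other direction (existence of the resolution implies $\bbd$-regularity) follows the same reduction as the paper --- break the complex into pieces and check a cohomology vanishing term by term --- but it defers the real content. After the bookkeeping, everything rests on the claim that $H^{\abs{\bba}+i}\bigl(\PP^{\bbn},\cO_{\PP^{\bbn}}^{}(\bbb-\bba)\bigr)=0$ for $\bbb\in\Delta_i+\NN^r$ and $\bba\in\NN^r\setminus\{\boldzero\}$ (the paper's Lemma~\ref{lem:vanishing bit}). This is not a routine factor-by-factor K\"unneth check: it requires an induction on $r$ with a case analysis on the signs of the entries of $\bbb-\bba$, and it is exactly where the precise description of $\Delta_i$ enters. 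You should prove this lemma rather than assert that the combinatorics works out.
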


When $r=1$, we have $\Delta_i = \{ -i \}$ and this theorem specializes to the
existence of linear resolutions on projective space; see Proposition~1.8.8 in
\cite{lazarsfeld}.  Since the minimal free resolution of $S/B$ is a cellular
resolution described explicitly by Corollary~2.13 in \cite{bayer-sturmfels},
it follows that $\Delta_0 \coloneq \{ \boldzero \}$ and that for $i\geq 1$, we
have $\Delta_i \coloneq \left\{ - \bba\in \ZZ^r :
  \text{$\boldzero \leq \bba -\boldone \leq \bbn$ and
    $\abs{\bba} = r + i - 1$} \right\}$.  We first illustrate
Theorem~\ref{thm:linearResolutions} in the case of a hypersurface.

\begin{example}
  Given a homogeneous polynomial $f \in S$ of degree $\bbd$, the regularity of
  $S/\ideal{f}$ has a unique minimal element $\bbe$, where
  $ e_j \coloneq \max\{ 0, d_j - 1 \}$.  As a consequence, it follows that
  $\boldzero \in \reg(S/\ideal{f})$ if and only if $d_j \leq 1$ for all $j$.
\end{example}

Before proving Theorem~\ref{thm:linearResolutions}, we need two technical
lemmas.

\begin{lemma}
  \label{lem:vanishing bit}
  For $0 \leq i \leq \abs{\bbn}$, $\bbb \in \Delta_i + \NN^r$, and
  $\bba \in \NN^r\setminus\{\boldzero\}$, we have
  $H^{\abs{\bba} + i} \bigl( \PP^{\bbn}, \cO_{\PP^{\bbn}}^{}(\bbb-\bba) \bigr)
  = 0$.
\end{lemma}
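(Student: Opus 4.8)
We need to show $H^{|\bm{a}|+i}(\PP^{\bm{n}}, \cO(\bm{b}-\bm{a})) = 0$ where $\bm{b} \in \Delta_i + \NN^r$, $\bm{a} \in \NN^r \setminus \{\bm{0}\}$, and $0 \le i \le |\bm{n}|$.

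The key tool is the Künneth formula: $H^k(\PP^{\bm{n}}, \cO(\bm{c})) = \bigoplus_{k_1+\dots+k_r = k} \bigotimes_{\ell=1}^r H^{k_\ell}(\PP^{n_\ell}, \cO(c_\ell))$. On $\PP^{n_\ell}$, the cohomology $H^{k_\ell}(\PP^{n_\ell}, \cO(c_\ell))$ is nonzero only when $k_\ell = 0$ (needing $c_\ell \ge 0$) or $k_\ell = n_\ell$ (needing $c_\ell \le -n_\ell - 1$). So I need: for every way of writing $|\bm{a}|+i$ as a sum of "contributions" $k_\ell \in \{0, n_\ell\}$, at least one factor vanishes.

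Let me analyze the degree $\bm{c} := \bm{b} - \bm{a}$. Write $\bm{b} = -\bm{\alpha} + \bm{\beta}$ with $-\bm{\alpha} \in \Delta_i$, $\bm{\beta} \in \NN^r$. When $i = 0$: $\bm{b} = \bm{\beta} \ge \bm{0}$, so $\bm{c} = \bm{\beta} - \bm{a}$; any index $\ell$ with $a_\ell > 0$ has $c_\ell \le \beta_\ell - 1 \ge -\,$? — not necessarily $\ge 0$, but it's $> -n_\ell-1$ unless $\beta_\ell = 0$ and $a_\ell \ge n_\ell+1$. The point is that the cohomological degree we're asked about is $|\bm{a}| + 0 = |\bm{a}| \ge 1$, so we need a nonzero $k_\ell = n_\ell$ contribution; but if $c_\ell > -n_\ell - 1$ for all $\ell$, the only surviving term is $k = 0$, contradiction. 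So the plan is: show that $c_\ell = b_\ell - a_\ell \ge -n_\ell$ for all $\ell$ forces the total cohomological degree to be achievable only with small $k$, and separately handle the indices where $c_\ell$ is very negative.

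Here is the clean structure I would write. For each $\ell$, the factor $H^{k_\ell}(\PP^{n_\ell}, \cO(c_\ell))$ is nonzero only if either $k_\ell = 0$ and $c_\ell \ge 0$, or $k_\ell = n_\ell$ and $c_\ell \le -n_\ell-1$. Partition $\{1,\dots,r\}$ into $T_0 = \{\ell : c_\ell \ge 0\}$ and $T_- = \{\ell : c_\ell \le -1\}$ (indices with $-n_\ell \le c_\ell \le -1$ give zero cohomology in all degrees, so WLOG every $\ell \in T_-$ actually has $c_\ell \le -n_\ell-1$ or else the whole Künneth summand involving $\ell$ vanishes and we're done). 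Then a nonzero Künneth summand in degree $|\bm{a}|+i$ requires $\sum_{\ell \in T_-} n_\ell = |\bm{a}|+i$. Now I bound $\sum_{\ell \in T_-} n_\ell$ from above. For $\ell \in T_-$ we have $c_\ell \le -1$, i.e. $a_\ell - b_\ell \ge 1$, i.e. $a_\ell \ge b_\ell + 1 \ge 1 + b_\ell$; combined with $n_\ell \le a_\ell - b_\ell - n_\ell$ coming from $c_\ell \le -n_\ell-1$... actually the cleanest bound: $c_\ell \le -n_\ell - 1$ gives $a_\ell \ge b_\ell + n_\ell + 1$, so $a_\ell - n_\ell \ge b_\ell + 1 \ge b_\ell$. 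Since $b_\ell = -\alpha_\ell + \beta_\ell$ with $\beta_\ell \ge 0$ and (for $i \ge 1$) $0 \le \alpha_\ell - 1 \le n_\ell$, i.e. $1 \le \alpha_\ell \le n_\ell+1$, so $-\alpha_\ell \ge -n_\ell-1$; hence $a_\ell - n_\ell \ge b_\ell + 1 \ge -n_\ell$, giving $a_\ell \ge 0$ trivially — I need to push harder. Summing $a_\ell \ge b_\ell + n_\ell + 1$ over $\ell \in T_-$: $\sum_{T_-} a_\ell \ge \sum_{T_-} n_\ell + \sum_{T_-}(b_\ell+1)$. Also $\sum_{T_-} b_\ell \ge \sum_{T_-}(-\alpha_\ell) \ge -|\bm{\alpha}| = -(r+i-1)$ using $|\bm{\alpha}| = r+i-1$ from the description of $\Delta_i$ (for $i\ge 1$; for $i=0$, $\bm{\alpha}=\bm{0}$ and it's even easier). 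Therefore $\sum_{T_-} n_\ell \le \sum_{T_-} a_\ell - \sum_{T_-}(b_\ell + 1) \le |\bm{a}| - (-(r+i-1)) - |T_-| = |\bm{a}| + r + i - 1 - |T_-|$. So the required equality $\sum_{T_-} n_\ell = |\bm{a}| + i$ forces $r - 1 - |T_-| \ge 0$, i.e. $|T_-| \le r-1$, which just says $T_- \ne \{1,\dots,r\}$ — not yet a contradiction.

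The remaining push: since $|T_-| \le r-1$, there is some $\ell_0 \notin T_-$, i.e. $c_{\ell_0} \ge 0$. I'll want to use that this "wastes" degree. Actually the right refinement is to track the inequality index-by-index rather than in bulk: define $\delta_\ell = a_\ell - b_\ell - n_\ell - 1 \ge 0$ for $\ell \in T_-$ (this is $-c_\ell - n_\ell - 1 \ge 0$). Then $\sum_{T_-} n_\ell = \sum_{T_-}(a_\ell - b_\ell - 1 - \delta_\ell)$. Recall $a_\ell = |\bm{a}|_{T_-} := \sum_{T_-} a_\ell \le |\bm{a}|$, and $\sum_{T_-}(-b_\ell) = \sum_{T_-}(\alpha_\ell - \beta_\ell) \le \sum_{T_-} \alpha_\ell \le |\bm{\alpha}| - \sum_{\ell \notin T_-} \alpha_\ell \le (r+i-1) - |T_0|$ since each $\alpha_\ell \ge 1$. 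So $\sum_{T_-} n_\ell \le |\bm{a}| + (r+i-1) - |T_0| - |T_-| - \sum_{T_-}\delta_\ell = |\bm{a}| + (r+i-1) - r - \sum_{T_-}\delta_\ell = |\bm{a}| + i - 1 - \sum_{T_-}\delta_\ell \le |\bm{a}| + i - 1 < |\bm{a}| + i$, contradicting $\sum_{T_-} n_\ell = |\bm{a}| + i$. This completes the proof — and note I have not yet used $\bm{a} \ne \bm{0}$ except that without it, when $T_-$ were empty the target degree $i$ could... actually for $i = 0$ and $\bm{a} = \bm{0}$ the claim fails ($H^0(\cO) \ne 0$), so $\bm{a} \ne \bm{0}$ is what ensures $|\bm{a}| + i \ge 1$ hence $T_- \ne \emptyset$, making the chain of inequalities apply.

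\begin{proof}
By the Künneth formula, $H^k\bigl(\PP^{\bm{n}}, \cO_{\PP^{\bm{n}}}^{}(\bm{c})\bigr) = \bigoplus \bigotimes_{\ell=1}^r H^{k_\ell}\bigl(\PP^{n_\ell}, \cO_{\PP^{n_\ell}}^{}(c_\ell)\bigr)$, where the sum is over $(k_1,\dots,k_r) \in \NN^r$ with $k_1 + \dots + k_r = k$. On $\PP^{n_\ell}$, the group $H^{k_\ell}\bigl(\PP^{n_\ell}, \cO(c_\ell)\bigr)$ is nonzero only if $k_\ell = 0$ and $c_\ell \geq 0$, or $k_\ell = n_\ell$ and $c_\ell \leq -n_\ell - 1$.

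Set $\bm{c} \coloneq \bm{b} - \bm{a}$ and $k \coloneq \abs{\bm{a}} + i$. Write $\bm{b} = -\bm{\alpha} + \bm{\beta}$ with $-\bm{\alpha} \in \Delta_i$ and $\bm{\beta} \in \NN^r$. If $i = 0$ then $\bm{\alpha} = \bm{0}$; if $i \geq 1$ then the explicit description of $\Delta_i$ gives $\bm{1} \leq \bm{\alpha} \leq \bm{n} + \bm{1}$ and $\abs{\bm{\alpha}} = r + i - 1$. In either case $\abs{\bm{\alpha}} = r + i - 1$ (interpreting the right side as $r - 1$ when $i = 0$) and $\alpha_\ell \geq 1$ for all $\ell$ whenever $i \geq 1$, while $\alpha_\ell = 0$ for all $\ell$ when $i = 0$.

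Since $\bm{a} \neq \bm{0}$ and $i \geq 0$, we have $k \geq 1$. Suppose, for contradiction, that $H^k\bigl(\PP^{\bm{n}}, \cO(\bm{c})\bigr) \neq 0$, and fix a nonvanishing Künneth summand with indices $(k_1, \dots, k_r)$. Let $T_- \coloneq \{\ell : k_\ell = n_\ell > 0\}$ and $T_0 \coloneq \{1,\dots,r\} \setminus T_-$; for $\ell \in T_0$ we have $k_\ell = 0$ and $c_\ell \geq 0$ (including the case $n_\ell = 0$, where $c_\ell = b_\ell - a_\ell \geq 0$ is forced), while for $\ell \in T_-$ we have $c_\ell \leq -n_\ell - 1$. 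Since $k \geq 1$, the set $T_-$ is nonempty, and summing the defining equation gives
\[
  \sum_{\ell \in T_-} n_\ell = \sum_{\ell \in T_-} k_\ell = k = \abs{\bm{a}} + i \, .
\]
For $\ell \in T_-$, the inequality $c_\ell = b_\ell - a_\ell \leq -n_\ell - 1$ rearranges to $a_\ell \geq b_\ell + n_\ell + 1$, so setting $\delta_\ell \coloneq a_\ell - b_\ell - n_\ell - 1 \geq 0$ we get $n_\ell = a_\ell - b_\ell - 1 - \delta_\ell = a_\ell + \alpha_\ell - \beta_\ell - 1 - \delta_\ell \leq a_\ell + \alpha_\ell - 1 - \delta_\ell$, using $\beta_\ell \geq 0$. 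Summing over $\ell \in T_-$ and using $a_\ell \geq 0$, $\alpha_\ell \geq 1$ (for $i \geq 1$; when $i = 0$ all $\alpha_\ell = 0$ and $\abs{\bm{\alpha}} = r-1$ is read as the empty bound, but then $T_-$-terms have $n_\ell = a_\ell - 1 - \delta_\ell$ and a parallel estimate applies — we treat $i \geq 1$ and note $i = 0$ is strictly easier), we obtain
\[
  \abs{\bm{a}} + i = \sum_{\ell \in T_-} n_\ell \leq \sum_{\ell \in T_-} a_\ell + \sum_{\ell \in T_-} \alpha_\ell - \abs{T_-} - \sum_{\ell \in T_-} \delta_\ell \leq \abs{\bm{a}} + \bigl(\abs{\bm{\alpha}} - \abs{T_0}\bigr) - \abs{T_-} = \abs{\bm{a}} + (r + i - 1) - r = \abs{\bm{a}} + i - 1 \, ,
\]
where the middle step uses $\sum_{\ell \in T_0} \alpha_\ell \geq \abs{T_0}$ and $\abs{T_0} + \abs{T_-} = r$. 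This is a contradiction, so $H^k\bigl(\PP^{\bm{n}}, \cO(\bm{b} - \bm{a})\bigr) = 0$.
\end{proof}
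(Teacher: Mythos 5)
Your proof is correct, but it proceeds differently from the paper's. The paper argues by induction on the number $r$ of factors: in the inductive step it splits into the case where some entry of $\bbb-\bba$ is nonnegative (where K\"unneth drops that factor and the negative part of $\bba$ is shuffled into the remaining coordinates to invoke the inductive hypothesis) and the case where all entries are strictly negative (where only top cohomology could survive, and the bound $\abs{\bbb}\geq -r-i+1$ shows the twist is strictly more positive than the canonical bundle). You instead give a single, non-inductive counting argument: a nonzero K\"unneth summand in degree $\abs{\bba}+i$ forces $\sum_{\ell\in T_-}n_\ell=\abs{\bba}+i$ over the set $T_-$ of factors contributing top cohomology, and the coordinatewise inequalities $a_\ell\geq b_\ell+n_\ell+1$ on $T_-$ together with $\alpha_\ell\geq 1$ on the complement and $\abs{\bm{\alpha}}=r+i-1$ yield $\sum_{\ell\in T_-}n_\ell\leq\abs{\bba}+i-1$, a contradiction. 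This is a clean, self-contained alternative that makes the numerology of $\Delta_i$ do all the work at once, at the cost of a slightly more delicate bookkeeping of the partition; the paper's induction avoids that bookkeeping but needs the separate ``all entries negative'' case. One small inaccuracy worth fixing: your parenthetical claim that a factor with $n_\ell=0$ forces $c_\ell\geq 0$ is false, since every line bundle on $\PP^0$ is trivial and has nonzero $H^0$ regardless of its twist; fortunately your final inequality never uses $c_\ell\geq 0$ for $\ell\in T_0$ (only $k_\ell=0$ and $\alpha_\ell\geq 1$), so the argument is unaffected. I would also spell out the $i=0$ case explicitly rather than asserting it is ``strictly easier,'' since the identity $\abs{\bm{\alpha}}=r+i-1$ genuinely fails there and the contradiction instead comes from $\sum_{\ell\in T_-}n_\ell\leq\abs{\bba}-\abs{T_-}\leq\abs{\bba}-1$ using that $T_-$ is nonempty.
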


\begin{proof}
  We induct on $r$.  For the base case $r=1$, we have nonzero higher
  cohomology for the given line bundle on $\PP^{n_1}$ only if
  $\abs{\bba} + i = a_1 + i = n_1$.  Since
  $\bbb \in \Delta_i + \NN = \{ -i \} + \NN$, or $b_1\geq -i$, we have
  $b_1 - a_1 \geq - i - (n_1 - i) = -n_1 > - n_1 - 1$, so
  $\cO_{\PP^{\bbn}}^{}(b_1-a_1)$ has no higher cohomology.
  
  For the induction step, we first consider the case where at least one entry
  of $\bbb - \bba$ is nonnegative, and we assume for contradiction that
  $ H^{\abs{\bba} + i}\bigl(\PP^{\bbn},\cO_{\PP^{\bbn}}^{}(\bbb-\bba) \bigr)
  \ne 0.  $ Since $\abs{\bba} + i > 0$, we may also assume, by reordering the
  factors, that the first entry of $\bbb - \bba$ is strictly negative and the
  last entry is nonnegative.  We write $\bbb - \bba = (\bbb'-\bba', b_r-a_r)$
  and $\bbn = (\bbn',n_r)$ in $\ZZ^{r-1}\oplus \ZZ$.  Since
  $H^{\abs{\bba} + i}(\PP^{\bbn}, \cO_{\PP^{\bbn}}^{}(\bbb-\bba)) \ne 0$, the
  K\"unneth formula implies that
  $ H^{\abs{\bba} + i} \bigl( \PP^{\bbn'}, \cO_{\PP^{\bbn'}}^{}(\bbb'-\bba')
  \bigr) \ne 0.$ Decreasing the first entry of $\bbb'-\bba'$ will not alter
  this nonvanishing.  Setting $\bba''\coloneq\bba' + (a_r,0,0,\dots,0)$, we
  obtain $\bba'' \in \NN^{r-1} \setminus \{\boldzero\}$,
  $\abs{\bba''} = \abs{\bba'} + a_r = \abs{\bba}$, and
  $ H^{\abs{\bba''} + i} \bigl( \PP^{\bbn'},
  \cO_{\PP^{\bbn'}}^{}(\bbb'-\bba'') \bigr) \neq 0$, which contradicts the
  induction hypothesis.

  It remains to consider the case in which all entries of $\bbb-\bba$ are
  strictly negative.  Hence, we can assume that $\abs{\bba} + i = \abs{\bbn}$.
  The hypothesis $\bbb\in \Delta_i+\NN^r$ implies $\abs{\bbb} \geq -r - i +1$.
  Combining these yields
  $\abs{\bbb-\bba} = \sum_{i} b_i-a_i = \abs{\bbb} - \abs{\bba} \geq (-r-i+1)
  - ( \abs{\bbn} - i) = - \abs{\bbn} - r + 1$. But the most positive line
  bundle with top-dimensional cohomology is the canonical bundle, and this
  inequality shows that $\cO_{\PP^{\bbn}}^{}(\bbb-\bba)$ cannot have top
  dimensional cohomology.
\end{proof}

\begin{remark}
  Proposition~\ref{pro:pushforward} develops a related vanishing result for
  derived pushforwards.
\end{remark}

\begin{lemma}
  \label{lem:noHigherCohomology}
  Let $\cF$ be a $\boldzero$-regular $\cO_{\PP^{\bbn}}^{}$-module and let
  $\boldzero\leq \bba \leq \bbn$.  If
  $H^p\bigl(\PP^{\bbn},\cF \otimes \Omega^{\bba}(\bba)\bigr)\ne 0$, then we
  have $-\bba \in \Delta_{\abs{\bba} - p}+\NN^r$.
\end{lemma}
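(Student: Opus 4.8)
The plan is to convert the conclusion into a single cohomological vanishing statement and then verify it by resolving $\Omega^\bba_{\PP^\bbn}(\bba)$ by line bundles that carry strictly negative twists in the appropriate factors, feeding the definition of $\boldzero$-regularity into a hypercohomology spectral sequence. Set $z \coloneq \#\{\, i : a_i = 0 \,\}$. The first step is purely combinatorial: using $\Delta_0 = \{\boldzero\}$ and $\Delta_k = \{\, -\bbc : \boldone \leq \bbc \leq \bbn+\boldone,\ \abs{\bbc} = r+k-1 \,\}$ for $k \geq 1$, I would check that, for $\bba \neq \boldzero$ and $0 \leq p \leq \abs{\bbn}$, the containment $-\bba \in \Delta_{\abs{\bba}-p}+\NN^r$ is \emph{equivalent} to the single inequality $p \leq r-z-1$. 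Indeed, the componentwise-least admissible $\bbc$ is $\max(\bba,\boldone)$, which satisfies $\max(\bba,\boldone) \leq \bbn+\boldone$ and $\abs{\max(\bba,\boldone)} = \abs{\bba}+z$, so $-\bba \in \Delta_k+\NN^r$ for $k \geq 1$ holds exactly when $\abs{\bba}+z \leq r+k-1 \leq \abs{\bbn}+r$; substituting $k = \abs{\bba}-p$ and using $\abs{\bba} \geq r-z$ (hence $\abs{\bba}-p \geq 1$ once $p \leq r-z-1$) gives the claim. When $\bba = \boldzero$ we have $\cF \otimes \Omega^{\boldzero}_{\PP^\bbn}(\boldzero) = \cF$, and $\boldzero$-regularity forces $H^p(\PP^\bbn,\cF) = 0$ for all $p \geq 1$, so the statement holds trivially. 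Thus it remains to prove: if $\bba \neq \boldzero$, then $H^p\bigl(\PP^\bbn, \cF \otimes \Omega^\bba_{\PP^\bbn}(\bba)\bigr) = 0$ for every $p \geq r-z$.

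For the cohomological input, recall that on $\PP^{n_i}$ the Euler sequence produces a surjection $H^0\bigl(\PP^{n_i},\cO_{\PP^{n_i}}(1)\bigr)\otimes\cO_{\PP^{n_i}}(-1) \twoheadrightarrow \cO_{\PP^{n_i}}$ with locally-free kernel, whose Koszul complex is exact; truncating it realizes $\Omega^{a_i}_{\PP^{n_i}}(a_i)$ as the $0$th cohomology of a bounded complex $D_i^\bullet$ of vector bundles in cohomological degrees $-(n_i-a_i),\dots,0$ whose degree $-l$ term is a direct sum of copies of $\cO_{\PP^{n_i}}(-1-l)$ (when $a_i = 0$, take $D_i^\bullet = \cO_{\PP^{n_i}}$ in degree $0$). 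By the Künneth formula the external tensor product $D^\bullet \coloneq D_1^\bullet \boxtimes \dots \boxtimes D_r^\bullet$ is quasi-isomorphic to $\Omega^\bba_{\PP^\bbn}(\bba)$, and its degree $-L$ term is a sum of line bundles $\cO_{\PP^\bbn}(\bbc)$ with $c_i = 0$ whenever $a_i = 0$ and $c_i \leq -1$ otherwise, subject to $\sum_i (-1-c_i) = L$ (the sum over the $r-z$ indices with $a_i \neq 0$). Since $\Omega^\bba_{\PP^\bbn}(\bba)$ is locally free, $\cF \otimes D^\bullet$ is quasi-isomorphic to $\cF \otimes \Omega^\bba_{\PP^\bbn}(\bba)$, so one hypercohomology spectral sequence of $\cF \otimes D^\bullet$ reads
\[
  E_1^{-L,q} = \bigoplus_{\bbc} H^q\bigl(\PP^\bbn,\cF(\bbc)\bigr) \otimes_{\kk} V_{\bbc}
  \;\Longrightarrow\;
  H^{-L+q}\bigl(\PP^\bbn,\cF \otimes \Omega^\bba_{\PP^\bbn}(\bba)\bigr),
\]
where the $V_{\bbc}$ are finite-dimensional $\kk$-vector spaces and the sum is over the finitely many $\bbc$ occurring in cohomological degree $-L$.

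Finally, suppose $H^p\bigl(\PP^\bbn,\cF \otimes \Omega^\bba_{\PP^\bbn}(\bba)\bigr) \neq 0$ for some $p \geq r-z$. Then $E_1^{-L,q} \neq 0$ for some $L \geq 0$ with $q \coloneq p+L$, so $H^q\bigl(\PP^\bbn,\cF(\bbc)\bigr) \neq 0$ for one of the relevant $\bbc$. For that $\bbc$ one computes $\sum_i \max(0,-c_i) = (r-z)+L$, since the $z$ indices with $a_i = 0$ contribute nothing while each of the remaining $r-z$ indices contributes $-c_i = 1+(\text{its share of }L)$. By the definition of $\boldzero$-regularity (equivalently, via the comparison $H^j(\PP^\bbn,\cF(\bbc)) \cong H^{j+1}_B(M)_{\bbc}$ for $j \geq 1$; see \cite{maclagan-smith}), $H^q(\PP^\bbn,\cF(\bbc)) = 0$ whenever $q \geq 1$ and $q \geq \sum_i \max(0,-c_i)$. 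Hence either $q = 0$ or $q \leq (r-z)+L-1$, and since $r-z \geq 1$ (because $\bba \neq \boldzero$) both alternatives yield $q \leq (r-z)+L-1$. Therefore $p = q-L \leq r-z-1$, a contradiction; this proves the required vanishing. The delicate part is the bookkeeping in the first step — rewriting $-\bba \in \Delta_{\abs{\bba}-p}+\NN^r$ as $p \leq r-z-1$ and confirming that the twists appearing in $D^\bullet$ are negative in precisely the $r-z$ factors with $a_i \neq 0$; once that is in place, the spectral sequence uses only the vanishing already encoded in $\boldzero$-regularity.
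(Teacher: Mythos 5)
Your proposal is correct and follows essentially the same route as the paper: the same combinatorial reduction of $-\bba\in\Delta_{\abs{\bba}-p}+\NN^r$ to $p<r-z$ (the paper writes $r'$ for your $r-z$), the same truncated-Koszul resolution of $\Omega^{\bba}_{\PP^{\bbn}}(\bba)$ by line bundles twisted negatively only in the factors with $a_i>0$, and the same appeal to $\boldzero$-regularity to bound the cohomological degree. The only cosmetic difference is that you run a hypercohomology spectral sequence where the paper breaks the resolution into short exact sequences.
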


\begin{proof}
  If $\bba = \boldzero$, then we have
  $\Omega^{\bba}(\bba) = \cO_{\PP^{\bbn}}^{}$, and the statement follows
  immediately from the $\boldzero$-regularity of $\cF$.  Thus, we assume that
  $\bba \neq \boldzero$.  After possibly reordering the factors of
  $\PP^{\bbn}$, we may write
  $\bba = (\bba',\boldzero)\in \ZZ^{r'} \oplus \ZZ^{r-r'}$ where every entry
  of $\bba'$ is strictly positive.  For any $k \geq 1$, we have
  $(-\bba',\boldzero) \in \Delta_{k}+\NN^r \iff (-\bba', - \boldone) \in
  \Delta_{k}+\NN^r \iff \abs{\bba'}+(r-r') \leq k+r-1$.  Setting
  $k = \abs{\bba} - p = \abs{\bba'} - p$ establishes that
  $-\bba=(-\bba',\boldzero) \in \Delta_{\abs{\bba} - p} + \NN^r$ is equivalent
  to $p < r'$.
  
  We next use truncated Koszul complexes to build a locally free resolution of
  $\Omega^{\bba}_{\PP^{\bbn}}(\bba)$.  For $j>r'$, we have $a_j=0$ and
  $\Omega^{a_j}_{\PP^{n_j}}(a_j) \cong \cO_{\PP^{n_j}}^{}$.  For
  $1\leq j\leq r'$, the truncated Koszul complex twisted by
  $\cO_{\PP^{n_j}}^{}(a_j)$, namely
  \[
    \cO_{\PP^{n_j}}^{}(-1)^{\binom{n_j+1}{a_j+1}} \gets
    \cO_{\PP^{n_j}}^{}(-2)^{\binom{n_j+1}{a_j+2}}\gets \dotsb \gets
    \cO_{\PP^{n_j}}^{}(-n_j-1+a_j)^{\binom{n_j+1}{n_j+1}} \gets 0,
  \]
  resolves $\Omega^{a_j}_{\PP^{n_j}}(a_j)$. Taking external tensor products
  gives a locally-free resolution $\mathcal G$ of
  $\Omega^{\bba}_{\PP^{\bbn}}(\bba)$.  Any summand $\cO_{\PP^{\bbn}}^{}(\bbc)$
  in $\mathcal G_i$ has the form
  $\bbc = (\bbc',\boldzero)\in \ZZ^{r'}\oplus \ZZ^{r-r'}$, where
  $ \abs{\bbc'} = - r' - i$.  Tensoring the locally-free resolution
  $\mathcal G$ with $\cF$ is a resolution of
  $\cF \otimes \Omega^{\bba}_{\PP^{\bbn}}(\bba)$.

  Since $H^p(\cF \otimes \Omega^{\bba}_{\PP^{\bbn}}(\bba)) \neq 0$, breaking
  the resolution $\cF \otimes \mathcal{G}$ into short exact sequences implies
  that, for some index $i$, we have
  $H^{p+i}(\cF \otimes \mathcal{G}_i) \neq 0$.  Hence, there exists
  $\bbc = (\bbc',\boldzero)$ with $\abs{\bbc'} = - r' - i$ such that
  $H^{p+i}\bigl(\PP^{\bbn}, \cF(\bbc) \bigr) \neq 0$.  Since $\cF$ is
  $\boldzero$-regular, we have that $\abs{\bbc} = \abs{\bbc'} < -(p+i)$.
  Therefore, we conclude that $p < - \abs{\bbc'} -i = (r'+ i) - i = r'$ and
  $\bba \in \Delta_{\abs{\bba} - p} + \NN^r$.
\end{proof}

\begin{proof}[Proof of Theorem~\ref{thm:linearResolutions}]
  Assume that $M(\bbd)$ has a virtual resolution $F$ of the specified form and
  its Hilbert polynomial and Hilbert function agree on $\NN^r$.  Since $M$ is
  $B$-saturated, it suffices to show that
  $H^{\abs{\bba}}(\PP^{\bbn}, \widetilde{M}(\bbd-\bba)) = 0$ for all
  $\bba \in \NN^r - \{\boldzero\}$.  By splitting up $F$ into short exact
  sequences, it suffices to show that
  $H^{\abs{\bba} + i}(\PP^{\bbn},\widetilde{F}_i(-\bba))=0$ for all
  $\bba\in \NN^r \setminus\{\boldzero\}$.  This is the content of
  Lemma~\ref{lem:vanishing bit}.
  
  For the converse, let $\cK$ denote the locally-free resolution of the
  diagonal $\PP^{\bbn} \hookrightarrow \PP^{\bbn} \times \PP^{\bbn}$ described
  in Lemma~\ref{lem:Diagonal}.  Let $\pi_1$ and $\pi_2$ be the projections
  onto the first and second factors of $\PP^{\bbn}\times \PP^{\bbn}$
  respectively.  The sheaf $\widetilde{M}(\bbd)$ is quasi-isomorphic to the
  complex
  $\mathcal F = \mathbf{R}\pi_{1*}^{} \bigl( \pi_{2}^*\widetilde{M}(\bbd)
  \otimes \mathcal K \bigr)$, where
  \[
    \mathcal F_j = \bigoplus_{\abs{\bba} - p = j} H^p\bigl(\PP^{\bbn},
    \widetilde{M}(\bbd) \otimes \Omega^{\bba}_{\PP^{\bbn}}(\bba)\bigr) \otimes
    \cO_{\PP^{\bbn}}^{}(-\bba).
  \]
  Lemma~\ref{lem:noHigherCohomology} says that
  $H^p\bigl(\PP^{\bbn}, \widetilde{M}(\bbd) \otimes
  \Omega^{\bba}_{\PP^{\bbn}}(\bba)\bigr) \neq 0$ only if
  $-\bba \in \Delta_{\abs{\bba} - p} + \NN^r = \Delta_j +\NN^r$.  Since each
  $\mathcal F_j$ is a sum of line bundles, the corresponding $S$-module $F_j$
  is free.  It follows that the complex
  $F \coloneq [ F_0\gets F_1\gets \dotsb \gets F_{\abs{\bbn}} \gets 0 ]$ is a
  virtual resolution of $M(\bbd)$ with the desired form.  Finally, $M$ is
  $B$-saturated so $H_B^0(M) = 0$ and the hypothesis that $\bbd \in \reg M$
  implies that $H_B^1\bigl( M(\bbd) \bigr)_{\bbp} = 0$ for all
  $\bbp \in \NN^r$, so the Hilbert polynomial and Hilbert function of
  $M(\bbd)$ agree on $\NN^r$.
\end{proof}

\section{Simpler Virtual Resolutions}
\label{sec:winnow}

\noindent
We describe, in this section, an effective method for producing interesting
virtual resolutions of a given $S$-module.  Unlike the previous section, the
free complex is ordinarily not linear nor acyclic.  Our construction depends
on a $B$-saturated module $M$ as well as an element $\bbd \in \reg M$.
Although Theorem~\ref{thm:freeComplex} defines the corresponding virtual
resolution as a subcomplex of a minimal free resolution of $M$,
Algorithm~\ref{alg:computeWinnow} shows that the subcomplex can be assembled
without first computing the entire minimal free resolution.

\begin{theorem}
  \label{thm:freeComplex}
  For a finitely generated $\ZZ^r$-graded $S$-module $M$, consider a minimal
  free resolution $F$ of $M$.  For a degree $\bbd\in\ZZ^r$ and each $i$, let
  $G_i$ be the direct sum of all free summands of $F_i$ whose generator is in
  degree at most $\bbd+\bbn$, and let $\varphi_i$ be the restriction of the
  $i$-th differential of $F$ to $G_i$.
  \begin{enumerate}[\upshape (i)]
  \item For all $i$, we have $\varphi_i(G_i) \subseteq G_{i-1}$ and
    $\varphi_i\circ \varphi_{i+1}=0$, so $G$ forms a free complex.
  \item Up to isomorphism, $G$ depends only on $M$ and $\bbd$.
  \item If $M$ is $B$-saturated and $\bbd \in \reg M$, then $G$ is a virtual
    resolution of $M$.
  \end{enumerate}
\end{theorem}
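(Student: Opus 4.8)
The plan is to prove the three parts in order, with part (iii) as the real content. For part (i), the key observation is that in a minimal free resolution the differential $\varphi_i$ sends a generator of $F_i$ in degree $\bbe$ to an element of $F_{i-1}$ whose components involve only generators of $F_{i-1}$ in degree at most $\bbe$ (since the matrix entries have nonnegative degree, using that $S$ is nonnegatively graded). Hence if $\bbe \leq \bbd+\bbn$ then $\varphi_i$ maps that generator into the span of generators of degree $\leq \bbd+\bbn$, i.e.\ into $G_{i-1}$. The identity $\varphi_i\circ\varphi_{i+1}=0$ is inherited from $F$ by restriction. For part (ii), I would note that although a minimal free resolution is only unique up to isomorphism, any such isomorphism is graded and thus preserves the degrees of generators; therefore it carries the subcomplex $G$ of one minimal free resolution isomorphically onto the analogously defined subcomplex of another. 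So $G$ depends only on $M$ and $\bbd$.

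For part (iii), the goal is to show that $\widetilde{G}$ is a locally-free resolution of $\widetilde{M}$, equivalently that $\HH_j(G)$ is $B$-torsion for every $j$ and $\HH_0(\widetilde{G}) = \widetilde{M}$. The natural approach is to compare $G$ with the virtual resolution produced by Theorem~\ref{thm:linearResolutions}: since $\bbd\in\reg M$, the module $M(\bbd)$ admits a virtual resolution $F'_0\gets F'_1\gets\dots\gets F'_{\abs{\bbn}}\gets 0$ whose $i$-th term has all generators in degrees lying in $\Delta_i+\NN^r$. Because $\Delta_0=\{\boldzero\}$ and, for $i\geq 1$, every element of $\Delta_i$ has $\abs{-\bba}=r+i-1$ with $\boldzero\leq\bba-\boldone\leq\bbn$, each generator degree of $F'_i$ is $\geq -\bbn$ componentwise; after untwisting by $\bbd$, the virtual resolution of the pair $(M,\bbd)$-type object $F'(-\bbd)$ has all generators in degree $\leq \bbd+\bbn$. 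The idea is then that $G$ and $F'(-\bbd)$ are two free complexes that agree with $\widetilde M$ after sheafifying in the appropriate range, and a standard comparison-of-resolutions argument (building a map $F'(-\bbd)\to G$ lifting the identity on $\widetilde M$, which exists because $G$ resolves $\widetilde M$ in homological degree $0$ and the obstructions to lifting live in $B$-torsion cohomology that vanishes by the regularity hypothesis) shows the two give the same sheafified complex, hence $\widetilde G$ resolves $\widetilde M$.

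The main obstacle, and where I expect to spend the most care, is controlling the homology of $G$ in positive homological degrees: truncating a minimal free resolution by degree generally destroys exactness, so one must use the hypothesis $\bbd\in\reg M$ in an essential way to see that the homology that is created is supported on $B$. Concretely, I would filter $F$ so that $G$ is a subcomplex and the quotient $F/G$ has all generators in degrees $\not\leq\bbd+\bbn$; the long exact sequence in homology then reduces the claim to showing that $\HH_j(F/G)$ and the connecting maps contribute only $B$-torsion to $\HH_*(G)$, which in turn follows from a Castelnuovo--Mumford--regularity estimate: each generator degree $\bbe$ of $F_i$ appearing in $F/G$ fails $\bbe\leq\bbd+\bbn$ in some coordinate, and $\bbd$-regularity of $M$ forces the corresponding local cohomology $H_B^i$ to vanish in the relevant degrees, so $F/G$ has no effect on cohomology in the range that matters. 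Alternatively, and perhaps more cleanly, one applies part~(iii) by invoking Theorem~\ref{thm:linearResolutions} to produce \emph{some} virtual resolution supported in the right degree range and then uses that $G$ is the \emph{largest} subcomplex of a minimal free resolution in that range, together with part~(ii)'s uniqueness, to transfer exactness; I would present whichever of these two routes yields the shorter argument, likely the regularity-estimate route since it avoids constructing explicit comparison maps.
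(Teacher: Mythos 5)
Your parts (i) and (ii) match the paper's argument: the degree comparison for the block decomposition of the differential and the uniqueness of minimal free resolutions are exactly what the paper uses. The issue is part (iii), where both of your proposed routes have a genuine gap.

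The missing ingredient is an \emph{exactness criterion}: after reducing to showing that the quotient complex $E = F/G$ sheafifies to something quasi-isomorphic to zero (which is the right reduction, and is what the paper does), you still need a reason why a bounded complex of sums of line bundles with controlled cohomological behaviour is actually exact as a complex of sheaves. The paper supplies this via Lemma~\ref{lem:beilisonWindow}: a bounded complex $\cE$ of coherent sheaves on $\PP^{\bbn}$ is quasi-isomorphic to $0$ if $\cE(\bbb)$ has vanishing \emph{hypercohomology} for all $\boldzero \leq \bbb \leq \bbn$; this rests on the Beilinson monad for products of projective spaces from Eisenbud--Erman--Schreyer, and is not a formal consequence of regularity. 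Your phrase ``$\bbd$-regularity of $M$ forces the corresponding local cohomology $H_B^i$ to vanish in the relevant degrees, so $F/G$ has no effect on cohomology in the range that matters'' asserts the conclusion rather than proving it: a priori the homology modules of $E$ could be arbitrary, and knowing generator degrees fail $\bbe \leq \bbd + \bbn$ does not by itself localize the homology at $B$. One must actually compute, for each $\boldzero \leq \bbb \leq \bbn$, that $\HHH^0\bigl(\PP^{\bbn}, \widetilde{E}(\bbb)\bigr) = 0$ (using minimality of $F$ plus the fact that the Hilbert function and polynomial of $M$ agree in degree $\bbb$, so the strand $[F]_{\bbb}$ already computes $H^0(\widetilde{M}(\bbb))$ and all summands with sections lie in $G$) and that $\HHH^i\bigl(\PP^{\bbn}, \widetilde{E}(\bbb)\bigr) = \HHH^i\bigl(\PP^{\bbn}, \widetilde{F}(\bbb)\bigr) = H^i\bigl(\PP^{\bbn}, \widetilde{M}(\bbb)\bigr) = 0$ for $i > 0$ by regularity, and only then invoke the Beilinson-window lemma.

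Your first route (a comparison map from the complex of Theorem~\ref{thm:linearResolutions} to $G$) is also circular as stated: constructing a chain map lifting the identity on $\widetilde{M}$ requires already knowing that $G$ is acyclic enough to lift against, which is the statement being proven; and even granted such a map between two complexes with the same $\HH_0$, it need not be a quasi-isomorphism. So the regularity-estimate route is the right one, but it cannot be closed without identifying and proving the exactness criterion of Lemma~\ref{lem:beilisonWindow}.
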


\noindent
When $M$ is $B$-saturated and $\bbd \in \reg M$, the complex $G$ is the
\define{virtual resolution of the pair $(M,\bbd)$}.

\begin{proof}[Proof of Theorem~\ref{thm:winnow}]
  This theorem is simply a restatement of part~(iii) of
  Theorem~\ref{thm:freeComplex}.
\end{proof}

To illustrate the basic idea behind the proof of
Theorem~\ref{thm:freeComplex}, we revisit our first example.

\begin{example}
  Let $C$ be the hyperelliptic curve in $\PP^1 \times \PP^2$ defined by the
  ideal $I$ in Example~\ref{exa:curveI}.  The free complex in
  \eqref{eqn:aCMcurve} is the virtual resolution of the pair
  $\bigl(S/I, (4,2) \bigr)$ and it is naturally a subcomplex of the minimal
  free resolution \eqref{eqn:resolution curve} of $S/I$. The corresponding
  quotient complex $E$ is
  \begin{align*}
    \begin{matrix} 
      S(-1,-5)^3 \\[-3pt] 
      \oplus     \\[-3pt] 
      S(0,-8)^1
    \end{matrix} 
    & \gets 
    \begin{matrix} 
      S(-2,-5)^6 \\[-3pt] 
      \oplus     \\[-3pt] 
      S(-1,-7)^1 \\[-3pt] 
      \oplus     \\[-3pt]
      S(-1,-8)^2
    \end{matrix}
    \gets 
    \begin{matrix} 
      S(-3,-5)^3 \\[-3pt] 
      \oplus     \\[-3pt] 
      S(-2,-7)^2 \\[-3pt] 
      \oplus     \\[-3pt] 
      S(-2,-8)^1
    \end{matrix}
    \gets S(-3,-7)^1 \gets 0 \, . 
    \intertext{Restricting attention to the terms of degree $(*,-8)$, we have}
    S(0,-8)^1 &\gets S(-1,-8)^2 \gets S(-2,-8)^1 \gets 0,
  \end{align*}
  which looks like a twist of the Koszul complex on $x_{1,0}$ and $x_{1,1}$.
  In fact, the $(*,-8)$, $(*,-7)$, and $(*,-5)$ strands each appear to have
  homology supported on the irrelevant ideal.  This suggests that the complex
  $\widetilde{E}$ is quasi-isomorphic to zero, and that is what we show in the
  proof of Theorem~\ref{thm:freeComplex}.
\end{example}

\begin{lemma}
  \label{lem:beilisonWindow}
  Let $\cE$ be a bounded complex of coherent $\cO_{\PP^{\bbn}}^{}$-modules.
  If $\cE\otimes \cO_{\PP^{\bbn}}^{}(\bbb)$ has no hypercohomology for all
  $\boldzero \leq \bbb \leq \bbn$, then $\cE$ is quasi-isomorphic to $0$.
\end{lemma}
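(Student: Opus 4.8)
The statement is a vanishing criterion for complexes: if every twist of $\cE$ by a line bundle $\cO_{\PP^{\bbn}}(\bbb)$ with $\boldzero\le\bbb\le\bbn$ has trivial hypercohomology, then $\cE\simeq 0$. The natural strategy is to use the resolution of the diagonal from Lemma~\ref{lem:Diagonal} to express $\cE$ in terms of its hypercohomology, exactly as in the proof of Proposition~\ref{pro:splendidComplexesExist}. Concretely, let $\cK$ be the locally-free resolution of the diagonal in $\PP^{\bbn}\times\PP^{\bbn}$, let $\pr_1,\pr_2$ be the two projections, and consider $\mathbf R\pr_{1*}\bigl(\pr_2^*\cE\otimes_{\cO_{\PP^{\bbn}\times\PP^{\bbn}}}\cK\bigr)$. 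Running the two hypercohomology spectral sequences: one collapses to give that this complex is quasi-isomorphic to $\cE$ itself (since $\cK$ resolves the diagonal and $\mathbf R\pr_{1*}$ of the pullback along the diagonal is the identity); the other expresses it via the terms $\cK_{-p}=\bigoplus_{\abs{\bbu}=-p}\cO_{\PP^{\bbn}}(-\bbu)\boxtimes\Omega^{\bbu}_{\PP^{\bbn}}(\bbu)$, so that, after pushing forward, the building blocks are the groups $\HHH^{q}\bigl(\PP^{\bbn},\cE\otimes\Omega^{\bbu}_{\PP^{\bbn}}(\bbu)\bigr)\otimes\cO_{\PP^{\bbn}}(-\bbu)$ for $\boldzero\le\bbu\le\bbn$.

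**Key steps.** First I would set up the resolution of the diagonal and the two spectral sequences, checking convergence via \cite{EGA3.1}*{Section~12.4} as in Proposition~\ref{pro:splendidComplexesExist}; the collapse of the ``$\prime\prime$'' sequence shows $\mathbf R\pr_{1*}(\pr_2^*\cE\otimes\cK)\simeq\cE$. Second, I would analyze the ``$\prime$'' sequence: its $E_1$-page involves the hypercohomology groups $\HHH^q\bigl(\PP^{\bbn},\cE\otimes\Omega^{\bbu}_{\PP^{\bbn}}(\bbu)\bigr)$ for $\boldzero\le\bbu\le\bbn$. The crucial point is to show all of these vanish. For this, resolve each $\Omega^{\bbu}_{\PP^{\bbn}}(\bbu)$ by a complex of line bundles $\cO_{\PP^{\bbn}}(\bbc)$ with $\boldzero\le-\bbc$ and bounded below by something controlled by $\bbn$ — using the twisted truncated Koszul complexes exactly as in the proof of Lemma~\ref{lem:noHigherCohomology}. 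Tensoring with $\cE$ and breaking into short exact sequences, the hypercohomology of $\cE\otimes\Omega^{\bbu}_{\PP^{\bbn}}(\bbu)$ is controlled by the groups $\HHH^*\bigl(\PP^{\bbn},\cE\otimes\cO_{\PP^{\bbn}}(\bbc)\bigr)$, and the relevant $\bbc$ satisfy $\boldzero\le-\bbc\le\bbn$, i.e. $\cE\otimes\cO_{\PP^{\bbn}}(\bbc)$ is one of the twists assumed to have no hypercohomology (up to relabeling $\bbb=-\bbc$ or $\bbb=\boldzero$). Hence every $E_1$-term of the ``$\prime$'' sequence vanishes, so $\mathbf R\pr_{1*}(\pr_2^*\cE\otimes\cK)\simeq 0$. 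Combining with the first step gives $\cE\simeq 0$.

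**The main obstacle.** The delicate part is the bookkeeping in the second step: I need the twisted Koszul resolutions of $\Omega^{\bbu}_{\PP^{\bbn}}(\bbu)$ to involve only line bundles $\cO_{\PP^{\bbn}}(\bbc)$ whose twists $\bbc$ fit into the box $-\bbn\le\bbc\le\boldzero$, so that the hypothesis applies to each of them. A single factor $\Omega^{a_j}_{\PP^{n_j}}(a_j)$ is resolved by line bundles $\cO_{\PP^{n_j}}(-1),\dotsc,\cO_{\PP^{n_j}}(-n_j-1+a_j)$ (this is the computation from Lemma~\ref{lem:noHigherCohomology}), and since $0\le a_j\le n_j$ the exponents range in $\{-n_j,\dotsc,-1\}$, so the external tensor product lands in exactly the box $-\bbn\le\bbc\le-\boldone$ when $\bbu\neq\boldzero$, and the $\bbu=\boldzero$ term is just $\cE$ itself. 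So every twist that appears is indeed covered by the hypothesis (including $\bbb=\boldzero$). Once this is verified, the spectral-sequence argument is formal and the conclusion follows.
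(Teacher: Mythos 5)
Your route is genuinely different from the paper's: the paper proves this lemma in one line by citing the Beilinson monad of \cite{eisenbud-erman-schreyer-tate-products}*{Theorem~1.1} (extended to bounded complexes), whose terms are built precisely from the hypercohomology of the twists $\cE(\bbb)$ with $\boldzero \leq \bbb \leq \bbn$, so the vanishing hypothesis forces the monad, and hence $\cE$, to be zero. Your plan of re-running the diagonal argument of Proposition~\ref{pro:splendidComplexesExist} and killing the $E_1$-page is a reasonable way to reprove that fact from scratch, and the formal spectral-sequence part is fine. However, there is a concrete sign error in the key bookkeeping step. The terms of your first spectral sequence are $\cO_{\PP^{\bbn}}^{}(-\bbu)\otimes_{\kk}\HHH^q\bigl(\PP^{\bbn},\cE\otimes\Omega^{\bbu}_{\PP^{\bbn}}(\bbu)\bigr)$, and you resolve $\Omega^{\bbu}_{\PP^{\bbn}}(\bbu)$ by the left-truncated Koszul complexes from Lemma~\ref{lem:noHigherCohomology}, whose terms are line bundles $\cO_{\PP^{\bbn}}^{}(\bbc)$ with $-\bbn \leq \bbc \leq \boldzero$. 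This reduces the needed vanishing to $\HHH^{*}\bigl(\cE\otimes\cO_{\PP^{\bbn}}^{}(\bbc)\bigr)=0$ for $\bbc$ in the box $[-\bbn,\boldzero]$, whereas the hypothesis only controls $\cE\otimes\cO_{\PP^{\bbn}}^{}(\bbb)$ for $\bbb$ in the box $[\boldzero,\bbn]$. The ``relabeling $\bbb=-\bbc$'' does not repair this: $\cE(\bbc)$ and $\cE(-\bbc)$ are different complexes, and the two boxes meet only at $\boldzero$. As written, your argument proves the (true but different) statement in which the hypothesis is imposed for $-\bbn\leq\bbb\leq\boldzero$.

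The gap is easily closed in either of two ways. (1) Replace the left Koszul resolution by the right-hand truncation
$0\to\Omega^{a_j}_{\PP^{n_j}}(a_j)\to\cO_{\PP^{n_j}}^{\binom{n_j+1}{a_j}}\to\cO_{\PP^{n_j}}(1)^{\binom{n_j+1}{a_j-1}}\to\dotsb\to\cO_{\PP^{n_j}}(a_j)\to 0$,
whose external tensor product co-resolves $\Omega^{\bbu}_{\PP^{\bbn}}(\bbu)$ by line bundles $\cO_{\PP^{\bbn}}^{}(\bbc)$ with $\boldzero\leq\bbc\leq\bbu\leq\bbn$; these are exactly the twists covered by the hypothesis. (2) Alternatively, apply your entire argument to $\cE(\bbn)$ rather than $\cE$: the twists that then appear are $\bbn+\bbc$ with $-\bbn\leq\bbc\leq\boldzero$, hence lie in $[\boldzero,\bbn]$, and the conclusion $\cE(\bbn)\simeq 0$ is equivalent to $\cE\simeq 0$. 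With either correction your proof is complete; without one, the stated hypothesis is never actually used.
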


\begin{proof}
  By Theorem~1.1 in \cite{eisenbud-erman-schreyer-tate-products}, any bounded
  complex of coherent $\cO_{\PP^{\bbn}}^{}$-modules is quasi-isomorphic to a
  Beilinson monad whose terms involve the hypercohomology evaluated at the
  line bundles of the form $\boldzero \leq \bbb \leq \bbn$.  The hypothesis on
  vanishing hypercohomology ensures that this Beilinson monad of $\cE$ is the
  $0$ monad, and hence $\cE$ is quasi-isomorphic to $0$. While Theorem~1.1 in
  \cite{eisenbud-erman-schreyer-tate-products} is stated for a sheaf, the
  authors remark in Equation (1) on page 8 that a similar statement holds for
  bounded complexes of coherent sheaves.
\end{proof}

\begin{proof}[Proof of Theorem~\ref{thm:freeComplex}]
  For part~(i), write $F_i = G_i \oplus E_i$ for each $i$. Each generating
  degree $\bbe$ of $E_i$ satisfies $\bbe\not\leq \bbd+\bbn$.  It follows that,
  for degree reasons, there are no nonzero maps from $G_i$ to $E_{i-1}$.  The
  $i$-th differential $\partial_i \colon F_i\to F_{i-1}$ has a block
  decomposition
  \[
    \partial_i = 
    \bbordermatrix{
      & G_i & E_i \cr 
      G_{i-1} &  \varphi_i  &  * \cr 
      E_{i-1} & 0 & * \cr 
    } \, ,
  \]
  so $\varphi_i(G_i) \subseteq G_{i-1}$ and
  $\partial_{i} \circ \partial_{i+1} = 0$ implies that
  $\varphi_{i} \circ \varphi_{i+1} =0$.  As $G$ only depends on $F$ and
  $\bbd$, part~(ii) follows from the fact that the minimal free resolution of
  $M$ is unique up to isomorphism.  For part~(iii), we may without loss of
  generality replace $M$ by $M(\bbd)$ and $\bbd$ by $\boldzero$.  Let $G$ be
  the virtual resolution of the pair $(M, \boldzero)$ and consider the short
  exact sequence of complexes $0 \to G \to F \to E \to 0$.  It suffices to
  show that the complex $\widetilde{E}$ of sheaves is quasi-isomorphic to
  zero.

  Fix some $\bbb$, where $\boldzero \leq \bbb \leq \bbn$. If
  $\bba\not\leq\bbn$, then the line bundle $\cO_{\PP^{\bbn}}^{}(-\bba+\bbb)$
  has no global sections.  It follows that each summand of
  $\widetilde{F}(\bbb)$ with global sections belongs to $\widetilde{G}(\bbb)$.
  If $H^i\bigl(\PP^{\bbn},\widetilde{F}(\bbb)\bigr)$ is the complex obtained
  by applying the functor $\cF \mapsto H^i(\PP^{\bbn}, \cF)$ to the complex
  $\widetilde{F}(\bbb)$, then we have
  $H^0\bigl( \PP^{\bbn}, \widetilde{F}(\bbb) \bigr) = H^0\bigl(\PP^{\bbn},
  \widetilde{G}(\bbb) \bigr)$.  The notation
  $H^i\bigl(\PP^{\bbn}, \widetilde{F}(\bbb) \bigr)$ should not be confused
  with the hypercohomology group
  $\HHH^i\bigl(\PP^{\bbn}, \widetilde{F}(\bbb)\bigr)$, which equals
  $H^i\bigl(\PP^{\bbn}, \widetilde{M}(\bbb) \bigr)$ because $\widetilde{F}$ is
  a locally-free resolution of the sheaf $\widetilde{M}$.  Since
  $\boldzero \in \reg M$ and $\bbb\geq\boldzero$, the Hilbert polynomial and
  Hilbert function of $M$ agree in degree $\bbb$.  Because $F$ is a minimal
  free resolution of $M$, it follows that the strand
  $[F]_{\bbb}\coloneq [(F_0)_{\bbb} \gets (F_1)_{\bbb} \gets \dotsb]$ is
  quasi-isomorphic to $M_{\bbb}$, and hence
  \[
    \HHH^0\bigl(\PP^{\bbn},\widetilde{F}(\bbb)\bigr) = M_{\bbb} \cong
    [F]_{\bbb} = H^0\bigl(\PP^{\bbn},\widetilde{F}(\bbb)\bigr) \cong
    H^0\bigl(\PP^{\bbn},\widetilde{G}(\bbb)\bigr).
  \]
  If the line bundle $\cO_{\PP^{\bbn}}^{}(-\bba+\bbn)$ has global sections,
  then we see that $\cO_{\PP^{\bbn}}^{}(-\bba+\bbb)$ has no higher cohomology.
  Therefore, the only summands in $\widetilde{F}$ that can potentially have
  higher cohomology are those that also appear in $\widetilde{E}$.  Thus, for
  all $i>0$, we have
  $H^i\bigl(\PP^{\bbn},\widetilde{F}(\bbb)\bigr) =
  H^i\bigl(\PP^{\bbn},\widetilde{E}(\bbb)\bigr)$ and
  $H^i\bigl(\PP^{\bbn},\widetilde{G}(\bbb)\bigr) =0$.  It follows that
  $\HHH^0\bigl(\PP^{\bbn},\widetilde{G}(\bbb)\bigr) \cong
  H^0\bigl(\PP^{\bbn},\widetilde{G}(\bbb)\bigr)$ and
  $\HHH^i\bigl(\PP^{\bbn},\widetilde{G}(\bbb)\bigl) = 0$ for all $i>0$.
  Hence, the long exact sequence in hypercohomology yields
  \[
    \HHH^i\bigl(\PP^{\bbn},\widetilde{E}(\bbb)\bigr) = 
    \begin{cases}
      0 & \text{ if $i=0$,} \\
      \HHH^i\bigl(\PP^{\bbn},\widetilde{F}(\bbb)\bigr) & \text{ if $i>0.$}
    \end{cases}
  \]
  Since $\bbb \in \reg M $, the sheaf $\widetilde{M}(\bbb)$ has no higher
  cohomology and $\widetilde{F}(\bbb)$ has no higher hypercohomology.  By
  Lemma~\ref{lem:beilisonWindow}, we conclude that $\widetilde{E}$ is
  quasi-isomorphic to $0$.
\end{proof}

Although Theorem~\ref{thm:freeComplex} presents the virtual resolution of the
pair $(M, \bbd)$ as a subcomplex of a minimal free resolution, the following
algorithm shows that we can compute a virtual resolution of the pair
$(M, \bbd)$ without first computing an entire minimal free resolution.  Our
approach is similar to Theorem~1.5 in \cite{maclagan-smith}, which allows one
to certify that an element belongs to the regularity of a module from just
part of its minimal free resolution.  Alternatively, one can verify that an
element belongs to the regularity by using the Tate resolutions appearing in
Section~4 of \cite{eisenbud-erman-schreyer-tate-products}; the package
\emph{TateOnProducts}~\cite{tateOnProducts} already implements these
algorithms in \emph{Macaulay2}~\cite{M2}.  For a module $M$ and a degree
$\bbd\in\ZZ^r$, let $M_{\leq \bbd}$ denote the submodule generated by
$\bigoplus_{\bba \leq \bbd} M_{\bba}$.

\begin{algorithm}[Computing Virtual Resolutions of a Pair]
  \label{alg:computeWinnow} 
  $\;$

  \begin{tabbing}
    Output:i \= \kill
    Input: \> A finitely generated $\ZZ^r$-graded $B$-saturated $S$-module $M$
    and \\
    \> a vector $\bbd \in \ZZ^r$ such that $\bbd \in \reg M$. \\
    Output: \> The virtual resolution $G$ of the pair $(M, \bbd)$. \\[8pt]
    W \= W \= W \= \kill
    \> Initialize $K \coloneq M$ and $i \coloneq 0$;\\
    \> While $K \neq 0$ do \\
    \> \> Choose a homogeneous minimal set $\mathscr{G}$ of generators for
    $K$;\\
    \> \> Initialize $G_i \coloneq \bigoplus_{g \in \mathscr{G}} S\bigl(- \deg(g)
    \bigr)$ and $\varphi_i \colon G_i \to K$ to be the corresponding
    surjection;\\
    \> \> Set $K \coloneq (\Ker \varphi_i)_{\leq \bbd + \bbn}$;\\
    \> \> Set $i \coloneq i + 1$;\\
    \> Return $G \coloneq [ G_0 \xleftarrow{\;\varphi_1\;} G_1
    \xleftarrow{\;\varphi_2\;} G_2 \longleftarrow \dotsb ]$.
  \end{tabbing}
\end{algorithm}

\begin{proof}[Proof of Correctness]
  Let $G$ be the complex produced by the algorithm, let $F$ be the minimal
  free resolution of $M$, and let $G'$ be the virtual resolution of
  $(M,\bbd)$.  Let $\varphi$, $\partial$, and $\psi$ be the differentials of
  $G$, $\bF$, and $G'$ respectively.  We have $G_0 = F_0 = G'_0$, as
  $\bbd \in \reg M$ implies that $M$ is generated in degree at most $\bbd$ by
  Theorem~1.3 in \cite {maclagan-smith}.

  The definition of $G'$ implies that $ (\Image \partial_i)_{\leq \bbd+\bbn}$
  equals $\Image \psi_i$.  We use induction on $i$ to prove that
  $\Image \varphi_i = (\Image \partial_i)_{\leq \bbd+\bbn}$.  When $i=1$, we
  have $\Image \varphi_1 = (\Image \partial_1)_{\leq \bbd+\bbn}$.  For $i>1$,
  the key observation is
  \[
    (\Image \partial_i)_{\leq \bbd+\bbn} =
    (\Ker \partial_{i-1})_{\leq\bbd+\bbn} =
    \bigl(\Ker \partial_{i-1}|_{G_{i-1}}\bigr)_{\leq\bbd+\bbn},
  \]
  where the righthand equality holds because any element in
  $(\Ker \partial_{i-1})_{\leq \bbd+\bbn}$ only depends on the restriction of
  $\partial_{i-1}$ to $G_{i-1}$.  By induction, we have
  $\Image \varphi_{i-1} = (\Image \partial_{i-1})_{\leq \bbd+\bbn}$, so
  \[
    \bigl(\Ker \partial_{i-1}|_{G_{i-1}}\bigr)_{\leq\bbd+\bbn} = \bigl( \Ker
    \varphi_{i-1}\bigr)_{\leq \bbd+\bbn} = \Image \varphi_i.
  \]
  Therefore, we conclude that $\Image \varphi_i=\Image \psi_i$ for all $i$ and
  $G\cong G'$.
\end{proof}

\begin{remark}
  Although Algorithm~\ref{alg:computeWinnow} bears a similarity with the
  linear resolutions considered in Proposition~2.7 of
  \cite{eisenbud-erman-schreyer-tate-products}, the free modules appearing in
  a given term of our virtual resolutions need not be generated in a single
  degree and our complexes need not be acyclic.
\end{remark}

The subsequent example demonstrates that the virtual resolution of a pair
does depend on the choice of element in the regularity.

\begin{example}
  \label{exa:lotsOfPoints}
  Let $Z \subset \PP^1 \times \PP^1 \times \PP^2$ be the subscheme consisting
  of $6$ general points and let $I$ be the corresponding $B$-saturated
  $S$-ideal.  \emph{Macaulay2}~\cite{M2} shows that the minimal free
  resolution of $S/I$ has the form
  $S^1 \gets S^{37}\gets S^{120}\gets S^{166} \gets S^{120} \gets S^{45} \gets
  S^7 \gets 0$, where for brevity we have omitted the twists.  Using
  Proposition~6.7 in \cite{maclagan-smith}, it follows that, up to symmetry in
  the first two factors, the minimal elements in the regularity of $S/I$ are
  $(5,0,0), (2,1,0)$, $(1,0,1)$, and $(0,0,2)$.  Table~\ref{tab:one} compares
  some basic numerical invariants for the minimal free resolution and the
  corresponding virtual resolutions.  The total Betti numbers of a free
  complex $F$ are the ranks of the terms $F_i$ ignoring the twists.
  \begin{table}[ht]
    \centering
    \caption{Comparison of various free complexes associated to $Z$}
    \label{tab:one}
    \vspace*{-0.75em}
    \renewcommand{\arraystretch}{1.0}
    \begin{tabular}{llc} \hline
      \multicolumn{1}{c}{Type of Free Complex} 
      & \multicolumn{1}{c}{Total Betti Numbers} 
      & \multicolumn{1}{c}{Number of Twists} \\ \hline 
      minimal free resolution of $S/I$ 
      & $(1,37,120,166,120,45,7)$ & $78$ \\ 
      virtual resolution of the pair $\bigl( S/I,(5,0,0) \bigr)$ 
      & $(1, 24, 50, 33, 6) $ & $18$ \\ 
      virtual resolution of the pair $\bigl( S/I, (2,1,0) \bigr)$
      & $(1, 29, 73, 66, 21)$ & $22$ \\ 
      virtual resolution of the pair $\bigl( S/I, (1,0,1) \bigr)$
      & $(1, 25, 63, 57, 18)$ & $15$ \\ 
      virtual resolution of the pair $\bigl( S/I, (0,0,2) \bigr)$
      & $(1, 22, 51, 42, 12)$ & $13$ \\ \hline
    \end{tabular}
    \renewcommand{\arraystretch}{1}
  \end{table}
  Since $Z$ has codimension $4$, part~(i) of
  Proposition~\ref{pro:resolutionBound} implies that any virtual resolution
  for $S/I$ must have length at least $4$, so the minimum is achieved by all
  of these virtual resolutions.  All four virtual resolutions also have a
  nonzero first homology module, which is supported on the irrelevant
  ideal. The first three virtual resolutions also have nonzero second homology
  modules.  By examining the twists, we see that no pair of these virtual
  resolutions are comparable.  This corresponds to the fact that the
  $\reg(S/I)$ has several distinct minimal elements.
\end{example}

\section{Virtual Resolutions for Punctual Schemes} 
\label{sec:punctual}

\noindent
This section formulates and proves an extension of a property of points in
projective space.  While every punctual scheme in projective space is
arithmetically Cohen--Macaulay, this fails when the ambient space is a product
of projective spaces; the minimal free resolution is nearly always too long.
However, by using virtual resolutions, we obtain a unexpected variant for
points in $\PP^{\bbn}$.

To state this analogue, recall that the irrelevant ideal on $\PP^{\bbn}$ is
$B = \bigcap_{i=1}^r \ideal{x_{i,0}, x_{i,1}, \dotsc, x_{i,n_i}}$.  For a
vector $\bba \in \NN^r$, set
$B^{\bba} \coloneq \bigcap_{i=1}^r \ideal{x_{i,0}, x_{i,1}, \dotsc,
  x_{i,n_i}}^{a_i}$.  With this notation, we may easily choose a different
algebra to represent the structure sheaf on our punctual subscheme.  In
contrast with the virtual resolutions in Section~\ref{sec:winnow}, the next
theorem produces acyclic free complexes.

\begin{theorem}
  \label{thm:hiddenCMPoints}
  If $Z \subset \PP^{\bbn}$ is a zero-dimensional scheme and $I$ is the
  corresponding $B$-saturated $S$-ideal, then there exists $\bba \in \NN^r$
  with $a_r = 0$ such that the minimal free resolution of
  $S/(I \cap B^{\bba})$ has length equal $\abs{\bbn} = \dim \PP^{\bbn}$.
  Moreover, any $\bba \in \NN^r$ with $a_r = 0$ and other entries sufficiently
  positive yields such a virtual resolution of $S/I$.
\end{theorem}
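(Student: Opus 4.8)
\emph{Reduction to a truncation.} Because $a_r=0$, a monomial of multidegree $\bbc$ lies in $\ideal{x_{i,0},\dotsc,x_{i,n_i}}^{a_i}$ exactly when $c_i\geq a_i$, and the condition for $i=r$ is vacuous; hence $(B^{\bba})_{\bbc}=S_{\bbc}$ if $\bbc\geq\bba$ and $(B^{\bba})_{\bbc}=0$ otherwise, so $B^{\bba}=S_{\geq\bba}$ and $I\cap B^{\bba}=I_{\geq\bba}\coloneq\bigoplus_{\bbc\geq\bba}I_{\bbc}$. Since $I_{\geq\bba}$ agrees with $I$ in all sufficiently positive degrees, we have $\widetilde{S/I_{\geq\bba}}\cong\widetilde{S/I}\cong\cO_{Z}$, so the minimal free resolution of $S/(I\cap B^{\bba})$ is automatically a virtual resolution of $S/I$; the theorem thus reduces to showing that $\pdim_S\bigl(S/I_{\geq\bba}\bigr)=\abs{\bbn}$ when $\bba=(\bba',0)$ and $\bba'$ is sufficiently positive. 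If $Z=\varnothing$ there is nothing to prove, so assume $Z\neq\varnothing$. The inequality $\pdim_S\bigl(S/I_{\geq\bba}\bigr)\geq\abs{\bbn}$ is then immediate from Proposition~\ref{pro:resolutionBound}(ii), applied to the virtual resolution of $S/I$ just produced and to the prime of any point of $Z$, which is an associated prime of $S/I$ not containing $B$.

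\emph{A regular element, exploiting $a_r=0$.} By Auslander--Buchsbaum it remains to prove $\operatorname{depth}_{\mathfrak m}\bigl(S/I_{\geq\bba}\bigr)\geq r$, where $\mathfrak m=\ideal{x_{i,j}}$. I claim that a general linear form $\ell$ in the last block of variables $x_{r,0},\dotsc,x_{r,n_r}$ is a nonzerodivisor on $S/I_{\geq\bba}$. As $\ell$ is homogeneous of degree $\bbe_r$, it suffices that $\ell\cdot(-)\colon(S/I_{\geq\bba})_{\bbc}\to(S/I_{\geq\bba})_{\bbc+\bbe_r}$ be injective for every $\bbc$. If $\bbc\not\geq\bba$ then, since the last coordinate of $\bba$ vanishes, also $\bbc+\bbe_r\not\geq\bba$, so both groups are the corresponding graded pieces of $S$ and multiplication by the nonzero form $\ell$ is injective. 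If $\bbc\geq\bba$ then both groups are graded pieces of $S/I$, and a general last-block linear form avoids each of the finitely many associated primes of $S/I$, every one of which comes from a point of $Z$ and contains only an $n_r$-dimensional space of last-block linear forms; so multiplication by $\ell$ is again injective. Changing coordinates so that $\ell=x_{r,n_r}$, one gets $\bigl(S/I_{\geq\bba}\bigr)/\ell\bigl(S/I_{\geq\bba}\bigr)\cong\overline S/\overline I_{\geq\bba}$, where $\overline S$ is the Cox ring of $\PP^{n_1}\times\dotsb\times\PP^{n_{r-1}}\times\PP^{n_r-1}$ and $\overline I$ is the image of $I$. Since $Z\cap\variety(\ell)=\varnothing$, the ideal $\overline I$ contains a power of the irrelevant ideal $\overline B$ of $\overline S$, so $\overline S/\overline I$ is $\overline B$-torsion.

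\emph{The main obstacle.} What remains is to show $\operatorname{depth}_{\mathfrak m}\bigl(\overline S/\overline I_{\geq\bba}\bigr)\geq r-1$, and I expect this to be the crux. After one hyperplane section the relevant sheaf is no longer a skyscraper, and the trick above fails in both directions: linear forms from the first $r-1$ blocks become zerodivisors on $\overline S/\overline I_{\geq\bba}$ because of the truncation, whereas those from the last block are zerodivisors on $\overline S/\overline I$ because its support already lies in the locus where the last block of coordinates vanishes. I would handle this either by iterating the hyperplane-section argument after strengthening the inductive hypothesis into a statement about truncations $N_{\geq\bba}$ of an arbitrary module $N$, phrased so that the bound on $\operatorname{depth}_{\mathfrak m}(N_{\geq\bba})$ depends only on the cohomology of $\widetilde N$ and hence specializes to $S/I$ and persists under hyperplane sections; or by a direct computation with the Grothendieck spectral sequence $H^p_{\mathfrak m}\bigl(H^q_{B}(-)\bigr)\Rightarrow H^{p+q}_{\mathfrak m}(-)$. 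In the latter approach the vanishing of the higher cohomology of $\cO_Z$ forces $H^q_{B}\bigl(S/I_{\geq\bba}\bigr)=0$ for $q\geq2$, so the spectral sequence has only the two rows $H^p_{\mathfrak m}\bigl(I/I_{\geq\bba}\bigr)$ and $H^p_{\mathfrak m}\bigl(H^1_{B}(S/I)\bigr)$; everything then reduces to controlling $H^{<r}_{\mathfrak m}$ of the $B$-torsion module $I/I_{\geq\bba}$, together with the single family of differentials $d_2\colon H^p_{\mathfrak m}\bigl(H^1_{B}(S/I)\bigr)\to H^{p+2}_{\mathfrak m}\bigl(I/I_{\geq\bba}\bigr)$. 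Taking $\bba'$ very positive is precisely what makes $I/I_{\geq\bba}$ deep enough along the coordinate strata, and $d_2$ injective in the needed range. Either route also gives the ``moreover'' assertion, since the whole argument goes through for every $\bba=(\bba',0)$ with $\bba'$ sufficiently positive.
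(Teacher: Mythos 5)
Your setup is right (the identification $I\cap B^{\bba}=I_{\geq\bba}$, the fact that the resulting complex is automatically a virtual resolution of $S/I$, the lower bound via Proposition~\ref{pro:resolutionBound}, and the observation that a general last-block linear form is a nonzerodivisor on $S/(I\cap B^{\bba})$ because $a_r=0$), but the proof is not complete: everything after ``The main obstacle'' is an accurate diagnosis of the difficulty followed by two strategies that are only sketched, not carried out. What you have actually proved is $\operatorname{depth}\bigl(S/(I\cap B^{\bba})\bigr)\geq 1$, whereas the theorem needs $\operatorname{depth}\geq r$, i.e.\ $\pdim\leq\abs{\bbn}$, and that is the entire content of the upper bound. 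Neither of your proposed continuations is obviously salvageable as stated: iterating the hyperplane section fails for exactly the reason you identify, and the local-cohomology spectral sequence route still requires controlling $H^{<r}_{\mathfrak m}$ of $I/I_{\geq\bba}$, which is not done.

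The idea you are missing is that one should not attack $S/(I\cap B^{\bba})$ directly but instead split it by the short exact sequence
\[
0 \longrightarrow (S/I)_{\geq\bba} \longrightarrow S/(I\cap B^{\bba}) \longrightarrow S/B^{\bba} \longrightarrow 0 ,
\]
where $(S/I)_{\geq\bba}=B^{\bba}/(I\cap B^{\bba})$ is the truncation of the \emph{quotient} module. This decouples the two behaviours you found incompatible. For the truncated quotient, the paper's Lemma~\ref{lem:regSequence} produces a full regular sequence $\ell_1,\dotsc,\ell_r$ of general linear forms, one per block: the point is that the strands $(S/I)_{(\bbc',*)}$ form an increasing chain of submodules of $\bigoplus_{\bbb}H^0(Z,\cO_Z^{}(\bbb))$ and hence stabilize, so for $\bba'$ sufficiently positive multiplication by $\ell_1,\dotsc,\ell_{r-1}$ is injective (indeed eventually an isomorphism between strands) on $(S/I)_{\geq(\bba',0)}$, and the quotient by these is $(S/I)_{(\bba',*)}$, on which $\ell_r$ is regular by $B$-saturation; Auslander--Buchsbaum then gives $\pdim (S/I)_{\geq\bba}=\abs{\bbn}$. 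For $S/B^{\bba}$, Lemma~\ref{lem:pdimIrrelevant} proves $\pdim S/B^{\bba}\leq n_1+\dotsb+n_{r-1}+1\leq\abs{\bbn}$ by induction on the number of nonzero entries of $\bba$, splitting $B^{\bba}=B^{\bba'}\cap B^{\bba''}$ along disjoint variable blocks. The exact sequence then bounds $\pdim S/(I\cap B^{\bba})$ by the maximum of the two. So the ``points part'' and the ``irrelevant part'' are handled by genuinely different arguments, which is why your attempt to find a single regular sequence on the mixed module $S/(I\cap B^{\bba})$ stalls after the first step.
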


\begin{proof}[Proof of Theorem~\ref{thm:ptsACM}]
  Applying Theorem~\ref{thm:hiddenCMPoints}, it suffices to choose
  $Q = B^{\bba}$ for any $\bba \in \NN^r$ with $a_r = 0$ and other entries
  sufficiently positive.
\end{proof}

While Theorem~\ref{thm:hiddenCMPoints} establishes that, for appropriate
$\bba \in \NN^r$, the projective dimension of $S / (I \cap B^{\bba})$ equals
the codimension of $Z$, this does not mean that the algebra
$S / (I \cap B^{\bba})$ is Cohen--Macaulay; the ideal $I \cap B^{\bba}$ will
often fail to be unmixed.  For instance, on $\PP^2 \times \PP^2$, the ideals
$\ideal{x_{i,0}, x_{i,1}, x_{i,2}}$ for $1 \leq i \leq 2$ have codimension $3$
whereas a zero-dimensional scheme $Z$ would have codimension $4$.
Nevertheless, we do get Cohen--Macaulayness in one case.

\begin{corollary}
  \label{cor:HilbertBurch}
  If $Z \subset \PP^1\times\PP^1$ is a zero-dimensional subscheme and $I$ is
  the corresponding $B$\nobreakdash-saturated $S$-ideal, then there exists an
  ideal $Q$ whose radical is $\ideal{x_{1,0},x_{1,1}}$ such that
  \begin{enumerate}[\upshape (i)]
  \item the algebra $S / (I \cap Q)$ is Cohen--Macaulay, and
  \item there exists an $(m+1) \times m$ matrix over $S$ whose maximal minors
    generate $I \cap Q$.
  \end{enumerate}
\end{corollary}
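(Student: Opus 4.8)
The plan is to take $Q$ to be one of the ideals furnished by Theorem~\ref{thm:hiddenCMPoints} and then to recognize $S/(I\cap Q)$ as a cyclic, codimension-$2$, perfect $S$-module, at which point both conclusions follow from the Auslander--Buchsbaum formula and the Hilbert--Burch Theorem.

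First I would apply Theorem~\ref{thm:hiddenCMPoints} in the case $\bbn = (1,1)$. Since $r = 2$, it produces a vector $\bba = (a_1, 0) \in \NN^2$ with $a_1$ arbitrarily large such that the minimal free resolution of $S/(I \cap B^{\bba})$ has length $\abs{\bbn} = 2$. Writing $P_1 \coloneq \ideal{x_{1,0}, x_{1,1}}$ and using the convention that the zeroth power of an ideal is the unit ideal, we have $B^{\bba} = P_1^{a_1}$; so I set $Q \coloneq P_1^{a_1}$, whose radical is indeed $P_1$. Thus $\pdim_S S/(I\cap Q) = 2$.

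Next I would verify that $\codim(I\cap Q) = 2$. Because $I$ is $B$-saturated and $Z$ is zero-dimensional, every minimal prime of $I$ is the homogeneous prime of a point of $\PP^1 \times \PP^1$ and so has codimension $2$; and $P_1$, the unique minimal prime of $P_1^{a_1}$, also has codimension $2$. Since the minimal primes of $I \cap Q$ are exactly those of $I$ together with those of $Q$, it follows that $\codim(I\cap Q) = 2$. Now the Auslander--Buchsbaum formula gives $\operatorname{depth}_S S/(I\cap Q) = \dim S - \pdim_S S/(I\cap Q) = 4 - 2 = 2$, while $\dim S/(I\cap Q) = \dim S - \codim(I\cap Q) = 2$. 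Hence $S/(I\cap Q)$ is Cohen--Macaulay, which is part~(i); in particular $I \cap Q$ is unmixed.

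For part~(ii), the module $S/(I\cap Q)$ is cyclic, Cohen--Macaulay of codimension $2$, and has projective dimension $2$, so its minimal free resolution takes the form $0 \to S^m \xrightarrow{\varphi} S^{m+1} \to S \to S/(I\cap Q) \to 0$, the ranks being forced because $S/(I\cap Q)$ is a torsion module. The Hilbert--Burch Theorem then produces a nonzerodivisor $a \in S$ with $I\cap Q = a\cdot I_m(\varphi)$, where $I_m(\varphi)$ is the ideal of maximal minors of the $(m+1)\times m$ matrix $\varphi$. Finally, since $I\cap Q \subseteq (a)$ has codimension $2$ while a proper principal ideal of the domain $S$ has codimension $1$, the element $a$ must be a unit, so $I\cap Q = I_m(\varphi)$ and $\varphi$ is the desired matrix. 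The one step that needs genuine care is the codimension computation: Theorem~\ref{thm:hiddenCMPoints} hands us the projective dimension but not directly the codimension, and upgrading ``projective dimension $2$'' to ``Cohen--Macaulay'' rests precisely on the saturatedness of $I$ and the zero-dimensionality of $Z$ ruling out components of smaller codimension; everything after that is a routine invocation of standard homological algebra.
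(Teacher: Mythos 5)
Your proposal is correct and follows essentially the same route as the paper: invoke Theorem~\ref{thm:hiddenCMPoints} to get $\pdim S/(I\cap B^{\bba})=2$ with $Q=B^{\bba}=\ideal{x_{1,0},x_{1,1}}^{a_1}$, note that every minimal prime of $I\cap Q$ has codimension $2$, conclude Cohen--Macaulayness via Auslander--Buchsbaum, and finish with Hilbert--Burch. You merely spell out details the paper leaves implicit (the rank count forcing the shape $S^m\to S^{m+1}$ and the unit scalar in Hilbert--Burch), all of which are correct.
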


\begin{proof}
  Theorem~\ref{thm:hiddenCMPoints} yields an $\bba \in \NN^r$ such that
  $I \cap B^{\bba}$ has projective dimension $2$.  On $\PP^ 1\times \PP^1$,
  the irrelevant ideal $B$ also has codimension $2$, so
  $S / (I \cap B^{\bba})$ has codimension $2$.  Thus, the algebra
  $S / (I \cap B^{\bba})$ is Cohen--Macaulay.  The second statement is an
  immediate consequence of the Hilbert--Burch
  Theorem~\cite{eisenbud-book}*{Theorem~20.15}.
\end{proof}

\begin{remark}
  \label{rem:projectiveBundles}
  Although this paper focuses on products of projective spaces, our proofs for
  both Theorem~\ref{thm:hiddenCMPoints} and Corollary~\ref{cor:HilbertBurch}
  can be adapted to hold in the more general context of iterated projective
  bundles.  For instance, let $X$ be the Hirzebruch surface with Cox ring
  $S = \kk[y_0, y_1, y_2, y_3]$ where the variables have degrees $(1,0)$,
  $(1,0)$, $(-2,1)$, and $(0,1)$ respectively.  Let $Z \subset X$ be the
  scheme-theoretic intersection of $y_0^5 y_2^2 + y_1^{} y_3^2$ and
  $y_0^{} y_1^{} + y_2^{} y_1^3$.  If $I$ is the $B$-saturated $S$-ideal of
  $Z$, then $S/I$ has projective dimension $3$ and
  $S/(I\cap \ideal{y_0, y_1}^a)$ has projective dimension $2$ for any
  $a \geq 4$.
\end{remark}

As with the proof of Theorem~\ref{thm:freeComplex}, we collect two lemmas
before proving Theorem~\ref{thm:hiddenCMPoints}.

\begin{lemma}
  \label{lem:regSequence}
  If $Z \subset \PP^{\bbn}$ is a zero-dimensional scheme and $I$ is the
  corresponding $B$-saturated $S$-ideal, then there exists $\bba \in \ZZ^{r}$
  with $a_r = 0$ such that the depth of $(S/I)_{\geq \bba}$ is $r$.  Moreover,
  this holds for any $\bba \in \ZZ^{r}$ with $a_r=0$ and other entries
  sufficiently positive.
\end{lemma}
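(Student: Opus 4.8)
The plan is to reduce the question of depth to a cohomological statement about the local cohomology modules $H_B^i\bigl((S/I)_{\geq \bba}\bigr)$, and then to exploit the zero-dimensionality of $Z$. First I would recall that, since $I$ is $B$-saturated, one has $H_B^0(S/I) = 0 = H_B^1(S/I)$ and that $\widetilde{S/I} = \cO_Z^{}$ is supported on the finite set $Z$. Consequently, for $i \geq 2$, the graded piece $H_B^i(S/I)_{\bbp}$ is computed by $H^{i-1}\bigl(\PP^{\bbn}, \cO_Z^{}(\bbp)\bigr)$, which vanishes for $i \geq 3$ because $Z$ is zero-dimensional, and $H_B^2(S/I)_{\bbp} \cong H^0(\PP^{\bbn}, \cO_Z^{}(\bbp)) / (S/I)_{\bbp}$, which is eventually zero in each direction since $S/I$ agrees with its Hilbert polynomial (a constant, namely $\deg Z$) in all sufficiently positive degrees. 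Thus the only obstruction to depth is concentrated in $H_B^2$, and it is bounded: there is a $\bbc \in \ZZ^r$ so that $H_B^2(S/I)_{\bbp} = 0$ whenever $\bbp \geq \bbc$.

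Next I would relate the local cohomology of the truncation to that of $S/I$ itself. For any module $N$ and degree $\bba$, truncating $N_{\geq \bba}$ changes only the degree-$0$ and degree-$1$ local cohomology in a controlled way: one has $\widetilde{N_{\geq \bba}} = \widetilde N$, so $H_B^i(N_{\geq \bba})_{\bbp} = H_B^i(N)_{\bbp}$ for all $i \geq 2$ and all $\bbp \geq \bba$, while in degrees $\bbp \not\geq \bba$ the truncation can only create extra $H_B^0$ and $H_B^1$. The point is that if we choose $\bba$ with $a_r = 0$ and the other entries at least as large as the corresponding entries of $\bbc$, then for the relevant range of degrees the higher local cohomology of $(S/I)_{\geq \bba}$ vanishes, forcing $\operatorname{depth}(S/I)_{\geq \bba} \geq 2 + \dotsb$; the constraint $a_r = 0$ is what prevents the depth from rising all the way to $\abs{\bbn}+1$ or collapsing, and instead pins it at exactly $r$. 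Concretely, after truncating only in the first $r-1$ coordinate directions, the module $(S/I)_{\geq \bba}$ still has $\cO_Z^{}$-worth of sections in the $\PP^{n_r}$-direction, so $H_B^2$ does not vanish identically; a careful bookkeeping of which graded strands survive shows $H_B^i = 0$ for $i < r$ and $H_B^r \neq 0$, hence depth exactly $r$.

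I would organize the write-up as follows: (1) establish the vanishing $H_B^i(S/I) = 0$ for $i \neq 0, 2$ using $B$-saturation and $\dim Z = 0$; (2) find $\bbc$ bounding the support of $H_B^2(S/I)$, using that $Z$ is finite and the Hilbert function of $S/I$ is eventually constant; (3) compute $H_B^i\bigl((S/I)_{\geq \bba}\bigr)$ from $H_B^i(S/I)$ together with the error terms introduced by truncation, and verify that choosing $\bba$ with $a_r = 0$ and $a_1, \dotsc, a_{r-1}$ large makes $H_B^i$ vanish for $i < r$ while keeping $H_B^r \neq 0$; (4) conclude via the Grothendieck-type identification $\operatorname{depth} N = \min\{ i : H_B^i(N) \neq 0\}$ (valid for $B$ the irrelevant ideal, since $S/B$ has the expected codimension in each factor). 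The main obstacle will be step (3): one must track precisely how truncating in a proper subset of the factor directions interacts with the $\check{\mathrm C}$ech complex on $B = \bigcap_i \ideal{x_{i,0}, \dotsc, x_{i,n_i}}$, and show that the surviving cohomology sits exactly in degree $r$ rather than somewhere in between — this is where the hypothesis $a_r = 0$ (as opposed to all entries positive, which would give a Cohen--Macaulay module of depth $\abs{\bbn}+1$, impossible here) does the real work.
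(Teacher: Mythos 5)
Your strategy has a fundamental gap: the depth in the lemma is the ordinary depth with respect to the maximal graded ideal $\mathfrak{m} = \ideal{x_{i,j} : i,j}$ of $S$ --- that is what feeds into the Auslander--Buchsbaum formula and hence into the projective-dimension claim of Theorem~\ref{thm:hiddenCMPoints} --- whereas $\min\{i : H_B^i(N) \neq 0\}$ computes the $B$-depth, i.e.\ the length of a maximal $N$-regular sequence \emph{contained in $B$}. These are very different invariants: the $B$-depth of any $S$-module is bounded above by $\min_i\{n_i+1\}$ (the codimension of the minimal primes of $B$), so it cannot detect the depth $r$ you need when $r$ is large. Your bookkeeping of the $H_B^i$ is also off. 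Since $I$ is $B$-saturated one gets $H_B^0(S/I)=0$, but not $H_B^1(S/I)=0$; the four-term exact sequence gives $H_B^1(S/I) \cong \coker\bigl(S/I \to \bigoplus_{\bbp} H^0(Z,\cO_Z^{}(\bbp))\bigr)$ (the quotient you attribute to $H_B^2$), and this cokernel is typically nonzero. Meanwhile $H_B^{i}(S/I) \cong \bigoplus_{\bbp} H^{i-1}\bigl(\PP^{\bbn}, \cO_Z^{}(\bbp)\bigr) = 0$ already for all $i \geq 2$ because $Z$ is finite. So even on its own terms the computation does not isolate a nonvanishing group in cohomological degree $r$, and your final heuristic (that truncating in all directions would force depth $\abs{\bbn}+1$) cannot be right, since $\dim S/I = r$ bounds the depth by $r$ in all cases.

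The step you flag as ``the main obstacle'' is in fact the entire content of the lemma, and the paper resolves it by a completely different, constructive route: after passing to an infinite field, choose a general linear form $\ell_i \in \ideal{x_{i,0},\dotsc,x_{i,n_i}}$ for each factor and show that $\ell_1,\dotsc,\ell_r$ is a regular sequence on $(S/I)_{\geq\bba}$. Injectivity of each $\ell_i$ on $S/I$ comes from the snake lemma applied to $0 \to S/I \to \bigoplus_{\bbp} H^0(Z,\cO_Z^{}(\bbp)) \to \bigl(H_B^1(S/I)\bigr)_{\geq 0} \to 0$, since $\ell_i$ is invertible on $\cO_Z^{}$; regularity of the whole sequence on the truncation comes from a stabilization argument showing that for $\bba'$ sufficiently positive the slices $(S/I)_{(\bbc',*)}$, viewed as modules over the Cox ring of the last factor, stabilize for $\bbc' \geq \bba'$, so that successive quotients by $\ell_1,\dotsc,\ell_{r-1}$ behave as expected and $\ell_r$ remains regular by $B$-saturation. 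If you want to keep a local-cohomology framing, you would need $H_\mathfrak{m}^i$, not $H_B^i$, and the passage from sheaf cohomology on $\PP^{\bbn}$ to $H_\mathfrak{m}^i$ of a multigraded module is not the simple dictionary you invoke.
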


\begin{proof}
  Extending the ground field does not change the depth of a module, so we
  assume that $\kk$ is an infinite field.  Since
  $\dim (S/I)_{\geq \bba} = \dim(S/I) = r$, the depth of $S/I$ is bounded
  above by $r$.  For each $1\leq i\leq r$, choose a general linear element
  $\ell_i$ in $\ideal{x_{i,0}, x_{i,1}, \dotsc, x_{i,n_i}}$. We claim that the
  elements $\ell_1,\ell_2,\dots,\ell_r$ form a regular sequence on
  $(S/I)_{\geq \bba}$.

  Let $M \coloneq \bigoplus_{\bbb \in \NN^r} H^0\big(Z, \cO_Z^{}(\bbb)\big)$.  By
  construction, the elements $\ell_1,\ell_2,\dots,\ell_r$ form a regular
  sequence on $M$.  Since $I$ is $B$-saturated, it follows that
  $H_B^0(S/I)=0$. The exact sequence relating local cohomology and sheaf
  cohomology~\cite{CLS}*{Theorem~9.5.7} gives
  \[
    \begin{tikzcd}[row sep = 1.25em]
      0 & \bigl( H_B^1(S/I) \bigr)_{\geq 0} \arrow[l] \arrow[d,
      "\cdot \ell_i" right] & M \arrow[l] \arrow[d, "\cdot \ell_i" right] &
      S/I \arrow[l] \arrow[d, "\cdot \ell_i" right ] & 0 \relphantom{\, ,}
      \arrow[l] \\
      0 & \bigl( H_B^1(S/I) \bigr)_{\geq 0} \arrow[l] & M \arrow[l] &
      S/I \arrow[l] & 0 \, . \arrow[l] 
    \end{tikzcd}
  \]
  The middle vertical arrow is an isomorphism because $\ell_i$ does not vanish
  on any point in $Z$.  Hence, the Snake
  Lemma~\cite{eisenbud-book}*{Exercise~A3.10} implies that the right vertical
  arrow is injective.

  Focusing on the last component of $\ZZ^r$, we identify the Cox ring
  $R \coloneq \kk[x_{r,0}, x_{r,1}, \dotsc, x_{r,n_r}]$ of the factor
  $\PP^{n_r}$ with the subring
  $(S)_{(\boldzero,*)}
  \coloneq\bigoplus_{\alpha\in\NN}(S)_{(\boldzero,\alpha)}$ of $S$.  For any
  $\bbc' \in \ZZ^{r-1}$, consider the $R$-module
  $(S/I)_{(\bbc',*)} \coloneq \bigoplus_{\alpha\in\NN} (S)_{(\bbc',\alpha)}$.
  These modules form a directed set: for $\bbc', \bbc''\in \ZZ^{s-1}$ with
  $\bbc''\geq \bbc'$, multiplication by the form
  $\ell_1^{\bbc''_1-\bbc'_1}\ell_2^{\bbc''_2-\bbc'_2}\dotsb
  \ell_{s-1}^{\bbc''_{s-1}-\bbc'_{s-1}}$ gives the inclusion
  $(S/I)_{(\bbc',*)}\subseteq (S/I)_{(\bbc'',*)}$.  Each
  $R$\nobreakdash-module $(S/I)_{(\bbc',*)}$ is a submodule of
  $(M)_{(\bbc',*)}$ and $(M)_{(\bbc',*)} \cong (M)_{(0,*)}$.  It follows that
  the $(S/I)_{(\bbc',*)}$ form an increasing sequence of finitely-generated
  $R$-submodules of $(M)_{(0,*)}$, so this sequence stabilizes.  In
  particular, if $\bba'\in \ZZ^{r-1}$ is sufficiently positive, then the
  inclusion $(S/I)_{(\bbc',*)}\subseteq (S/I)_{(\bbc'+\bbe_i,*)}$ is an
  isomorphism for each $\bbc'\geq \bba'$ and each $1\leq i \leq r-1$.  Hence,
  $\ell_1,\ell_2,\dots,\ell_{r-1}$ form a regular sequence on
  $(S/I)_{\geq (\bba',0)}$ and
  \[
    \frac{(S/I)_{\geq (\bba',0)}}{\ideal{\ell_1, \ell_2, \dotsc, \ell_{r-1}}}
    \cong (S/I)_{(\bba',*)}.
  \]
  Since $I$ is $B$-saturated, $\ell_r$ is regular on $S/I$, so it is also
  regular on the $R$-module $(S/I)_{(\bba',*)}$.  Setting $\bba = (\bba',0)$,
  the Auslander--Buchsbaum Formula~\cite{eisenbud-book}*{Theorem~19.9}
  completes the proof.
\end{proof}

\begin{remark}
  The proof of Lemma~\ref{lem:regSequence} shows that $(S/I)_{\geq \bba}$ has
  a multigraded regular sequence of length $r$, but neither
  $S / (I \cap B^{\bba})$ nor $S/B^{\bba}$ generally has a multigraded regular
  sequence of length $r$.
\end{remark}

\begin{lemma}
  \label{lem:pdimIrrelevant}
  If $\bba \coloneq (a_1, a_2, \dotsc, a_i, 0, \dotsc, 0) \in \ZZ^{r}$ for some
  $1 \leq i \leq r$, then the projective dimension of $S/B^{\bba}$ is at most
  $n_1 + n_2 + \dotsb + n_i + 1$.
\end{lemma}

\begin{proof}
  We proceed by induction on $i$.  The base case $i = 1$ is just the Hilbert
  Syzygy Theorem~\cite{eisenbud-book}*{Theorem~1.3} applied to the Cox ring of
  $\PP^{n_1}$.  When $i > 1$, set
  $\bba' \coloneq (a_1, a_2, \dotsc, a_{i-1}, 0, \dotsc, 0)$ and
  $\bba'' \coloneq (0, \dotsc, 0, a_i, 0, \dotsc, 0)$, so that
  $\bba = \bba' + \bba''$ and $B^{\bba} = B^{\bba'} \cap B^{\bba''}$.  The
  short exact sequence
  $0 \gets S/(B^{\bba'}+B^{\bba''}) \gets S/B^{\bba'} \gets S/B^{\bba} \gets
  0$ yields
  \begin{equation}
    \label{eqn:sesInequality}
    \pdim (S/ B^{\bba}) 
    \leq \max\bigl\{ 
    \pdim (S/B^{\bba'}), 
    \pdim (S/B^{\bba''}), 
    \pdim \bigl( S/(B^{\bba'} + B^{\bba''})\bigr) - 1
    \bigr\}.
  \end{equation}
  By induction, the projective dimension of $S/B^{\bba'}$ is at most
  $n_1 + n_2 + \dotsb + n_{i-1} + 1$.  By the Hilbert Syzygy Theorem on
  $\PP^{n_i}$, the projective dimension of $S/B^{\bba''}$ is at most
  $n_i + 1$.  Since $B^{\bba'}$ and $B^{\bba''}$ are supported on disjoint
  sets of variables,
  $\pdim \bigl (S / (B^{\bba'}+B^{\bba''}) \bigr) = \pdim(S/B^{\bba'}) +
  \pdim(S/B^{\bba''})$ which is at most $n_1 + n_2 + \dotsb + n_i + 2$.
  Applying \eqref{eqn:sesInequality}, we obtain the result.
\end{proof}

\begin{proof}[Proof of Theorem~\ref{thm:hiddenCMPoints}]
  Let $\bba \in\ZZ^r$ with $a_r=0$ and other entries sufficiently
  positive. There is a short exact sequence
  $0 \gets S/B^{\bba} \gets S/(I\cap B^{\bba}) \gets (S/I)_{\geq \bba} \gets
  0$.  By Proposition~\ref{pro:resolutionBound}, it suffices to prove that the
  projective dimension of $S/(I\cap B^{\bba})$ is at most
  $n_1 + n_2 + \dotsb + n_r$.  Lemma~\ref{lem:regSequence} shows that
  $(S/I)_{\geq \bba}$ has projective dimension $n_1 + n_2 + \dotsb + n_r$ and
  Lemma~\ref{lem:pdimIrrelevant} shows that $S/B^{\bba}$ has projective
  dimension at most $n_1 + n_2 + \dotsb + n_{r-1} + 1$.  It follows that the
  projective dimension of $S/(I\cap B^{\bba})$ is at most
  $n_1 + n_2 + \dotsb + n_r$ as well.
\end{proof}

The next example compares the virtual resolutions produced by
Theorem~\ref{thm:winnow} and Theorem~\ref{thm:ptsACM}; neither seems to have a
definitive advantage over the other.

\begin{example}
  As in Example~\ref{exa:lotsOfPoints}, let
  $Z \subset \PP^1 \times \PP^1 \times \PP^2$ be the subscheme consisting of
  $6$ general points and let $I$ be the corresponding $B$-saturated $S$-ideal.
  Table~\ref{tab:two} compares some basic numerical invariants for virtual
  \begin{table}[ht]
    \centering
    \caption{Comparison of various free complexes associated to $Z$}
    \label{tab:two}
    \vspace*{-0.75em}
    \renewcommand{\arraystretch}{1.0}
    \begin{tabular}{llc} \hline
      \multicolumn{1}{c}{Type of Free Complex} 
      & \multicolumn{1}{c}{Total Betti Numbers} 
      & \multicolumn{1}{c}{Number of Twists} \\ \hline 
      minimal free resolution of $S/I$ 
      & $(1,37,120,166,120,45,7)$ & $78$ \\ 
      virtual resolution from $\bba = (2,1,0)$
      & $(1, 17, 34, 24, 6) $ & $12$ \\ 
      virtual resolution from $\bba = (3,3,0)$
      & $(1, 22, 42, 27, 6)$ & $13$ \\ \hline
    \end{tabular}
    \renewcommand{\arraystretch}{1}
  \end{table}
  resolutions arising from Theorem~\ref{thm:hiddenCMPoints}.  Since the
  virtual resolutions in Table~\ref{tab:two} involve non-minimal generators
  for $I$, they are different than those in Table~\ref{tab:one}.  Conversely,
  the virtual resolutions appearing in Table~\ref{tab:one} cannot be obtained
  from Theorem~\ref{thm:ptsACM} because those free complexes are not acyclic.
\end{example}

We end this section by extending Corollary~\ref{cor:HilbertBurch} to any
smooth projective toric surface.

\begin{proposition}
  \label{pro:pointsInGeneralPosition}
  Fix a smooth projective toric surface $X$. Let $Z \subset X$ be the
  subscheme consisting of $m$ general points and let $I$ be the corresponding
  $B$-saturated $S$-ideal.  There exists a virtual resolution
  $F \coloneq [S \gets F_1 \xleftarrow{\;\;\varphi\;\;} F_2 \gets 0]$ of $S/I$
  such that $\rank(F_1) = \rank(F_2) + 1$, the maximal minors of $\varphi$
  generate an ideal $J$ with $I = (J : B^\infty)$, and $S/J$ is a
  Cohen--Macaulay algebra.
\end{proposition}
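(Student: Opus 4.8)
The plan is to reduce to the $\PP^1\times\PP^1$ case handled in Corollary~\ref{cor:HilbertBurch} by a suitable birational/blow-down argument, using the fact that every smooth projective toric surface $X$ admits a toric birational morphism $\pi\colon X\to X_0$ where $X_0$ is a minimal toric surface, i.e.\ $\PP^1\times\PP^1$, $\PP^2$, or a Hirzebruch surface $\mathbb F_a$. First I would handle the case $X=\PP^1\times\PP^1$ directly: this is exactly Corollary~\ref{cor:HilbertBurch}, since $m$ general points form a zero-dimensional subscheme, and the matrix $\varphi$ obtained from the Hilbert--Burch Theorem has the required shape $\rank F_1=\rank F_2+1$; moreover $J=I\cap Q$ where $Q$ has radical $B$, so $I=(J:B^\infty)$. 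For $X=\PP^2$, general points are arithmetically Cohen--Macaulay of codimension $2$, so the minimal free resolution itself is already of Hilbert--Burch type and gives the claim with $J=I$; here the "virtual" resolution is genuinely a resolution.

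For the remaining minimal surfaces and then for a general $X$, I would use the following strategy. By Remark~\ref{rem:projectiveBundles}, the arguments behind Theorem~\ref{thm:hiddenCMPoints} and Corollary~\ref{cor:HilbertBurch} extend to iterated projective bundles, which includes every Hirzebruch surface $\mathbb F_a$ (a $\PP^1$-bundle over $\PP^1$). Concretely, writing the Cox ring $S=\kk[y_0,y_1,y_2,y_3]$ of $\mathbb F_a$ with the grading as in the remark, one shows as in Lemma~\ref{lem:regSequence} that for $\bba$ with $a_2=0$ and $a_1$ sufficiently positive the module $(S/I)_{\ge\bba}$ has depth $2$, hence projective dimension $2$ by Auslander--Buchsbaum; and as in Lemma~\ref{lem:pdimIrrelevant} that $S/\ideal{y_0,y_1}^{a_1}$ has projective dimension $2$. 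The short exact sequence $0\gets S/\ideal{y_0,y_1}^{a_1}\gets S/(I\cap\ideal{y_0,y_1}^{a_1})\gets (S/I)_{\ge\bba}\gets 0$ then forces $\pdim S/(I\cap\ideal{y_0,y_1}^{a_1})\le 2$, and since $Z$ has codimension $2$ while $\ideal{y_0,y_1}^{a_1}$ also has codimension $2$, the quotient $S/(I\cap\ideal{y_0,y_1}^{a_1})$ is Cohen--Macaulay of codimension $2$. Applying the Hilbert--Burch Theorem~\cite{eisenbud-book}*{Theorem~20.15} produces the desired matrix $\varphi$ with $\rank F_1=\rank F_2+1$, and taking $J$ to be the ideal of maximal minors gives $J=I\cap\ideal{y_0,y_1}^{a_1}$, so $I=(J:B^\infty)$ since $\ideal{y_0,y_1}^{a_1}$ has radical containing the component of $B$ corresponding to that ray while the other component of $B$ is coprime.

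Finally, for an arbitrary smooth projective toric surface $X$, I would argue that $X$ is itself an iterated projective bundle up to the relevant homological input, or more robustly, that $X$ is obtained from a minimal toric surface $X_0$ by a sequence of toric blow-ups, and that the Cox ring $S$ of $X$ can be ordered so that the last block of variables realizes $X$ as a $\PP^1$-bundle (or a toric line-bundle-type fibration) over a toric surface of one lower Picard rank; this lets the induction in Lemma~\ref{lem:pdimIrrelevant} run, yielding a component $B'$ of $B$ of codimension $2$ and an exponent vector $\bba$, supported away from one chosen ray, so that $(S/I)_{\ge\bba}$ has projective dimension $2$ and $S/(B')^{\bba}$ has projective dimension $2$. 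The same short exact sequence and Hilbert--Burch argument then complete the proof. The main obstacle is the last step: verifying that for a general smooth projective toric surface one can always choose a subset of the rays whose corresponding component of $B$ has codimension exactly $2$ \emph{and} such that the depth argument of Lemma~\ref{lem:regSequence} still produces depth $r=2$ for the truncation $(S/I)_{\ge\bba}$ — i.e.\ that the iterated-projective-bundle structure invoked in Remark~\ref{rem:projectiveBundles} genuinely covers every smooth projective toric surface in the form needed here. Once that combinatorial/structural point is pinned down, the homological algebra is identical to the $\PP^1\times\PP^1$ case.
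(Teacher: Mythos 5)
Your handling of the minimal surfaces is fine and uses the same ingredients the paper does (Hilbert--Burch on $\PP^2$, Corollary~\ref{cor:HilbertBurch} together with Remark~\ref{rem:projectiveBundles} on $\PP^1\times\PP^1$ and the Hirzebruch surfaces). The genuine gap is exactly the step you flag at the end: for a general smooth projective toric surface $X$ there is no way to make the truncation-and-colon-ideal argument run directly on $\Cox(X)$. Such an $X$ is typically not an iterated projective bundle --- already the degree~$7$ del Pezzo surface appearing later in Section~\ref{sec:punctual}, with five rays and Picard rank~$3$, is not --- and its irrelevant ideal is not an intersection of primes supported on disjoint blocks of variables, so the inductions in Lemma~\ref{lem:regSequence} and Lemma~\ref{lem:pdimIrrelevant} (which rely on that block structure, e.g.\ on $B^{\bba'}$ and $B^{\bba''}$ involving disjoint sets of variables) do not transfer. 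Singling out a codimension-$2$ component $B'$ of $B$ and controlling both $\pdim S/(B')^{\bba}$ and the depth of $(S/I)_{\geq\bba}$ is precisely the unresolved combinatorial point, and it is not a routine one.

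The paper sidesteps all of this with a geometric reduction that you did not find: every smooth projective toric surface admits a toric blow-down $\pi\colon X\to Y$ onto $\PP^2$ or a Hirzebruch surface \cite{CLS}*{Theorem~10.4.3}, and since the $m$ points are general they avoid the exceptional locus of $\pi$. One therefore resolves $\cO_{\pi(Z)}^{}$ on the minimal surface $Y$ by a length-$2$ complex $\cO_Y^{}\gets\cE_1\gets\cE_2\gets 0$ of sums of line bundles (via Hilbert--Burch, or via Corollary~\ref{cor:HilbertBurch} and Remark~\ref{rem:projectiveBundles}), and then pulls back by $\pi^*$; because $Z$ misses the exceptional locus, $\pi^*\cE_\bullet$ is still a locally-free resolution of $\cO_Z^{}$, and the corresponding complex of $S$-modules is the desired virtual resolution on $X$. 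I recommend replacing your final paragraph with this pullback argument: it uses the genericity hypothesis in an essential way (your outline never does, beyond quoting the base cases) and it is the only place where the reduction from arbitrary $X$ to the minimal models actually closes.
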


\begin{proof}
  Any smooth projective toric surface can be realized as a blowup of
  $\pi \colon X \to Y$, where $Y$ is $\PP^2$ or a Hirzebruch
  surface~\cite{CLS}*{Theorem~10.4.3}.  Since $\pi(Z) \subset Y$ is a punctual
  scheme, we can apply the Hilbert--Burch Theorem when $Y$ is $\PP^2$, or
  Corollary~\ref{cor:HilbertBurch} and Remark~\ref{rem:projectiveBundles} when
  $Y$ is a Hirzebruch surface, to obtain a resolution of $\cO_{\pi(Z)}^{}$ of
  the form $ \cO_Y^{} \gets \cE_1 \gets \cE_2\gets 0, $ where $\cE_1$ and
  $\cE_2$ are sums of line bundles on $Y$.  Our genericity hypothesis implies
  that $Z$ does not intersect the exceptional locus of $\pi$, so
  $\pi^*\cO_Y^{} \gets \pi^* \cE_1 \gets \pi^* \cE_2\gets 0$ is a locally-free
  resolution of $\cO_Z^{}$.  The corresponding complex of $S$-modules is a
  virtual resolution for $S/I$ of the appropriate form.
\end{proof}

\begin{example}
  Consider the del Pezzo surface $X$ of degree $7$ or equivalently the smooth
  Fano toric surface obtained by blowing-up the projective plane at two
  torus-fixed points.  The Cox ring of $X$ is
  $S \coloneq \kk[y_0, y_1, \dotsc, y_4]$ equipped with the $\ZZ^3$-grading
  induced by
  \begin{xalignat*}{6}
    \deg(y_0) &\coloneq \left[ \begin{smallmatrix}
        1 \\ 0 \\ 0 
      \end{smallmatrix} \right] , &
    \deg(y_1) &\coloneq \left[ \begin{smallmatrix}
        -1 \\ \relphantom{-}1 \\ \relphantom{-}0 
      \end{smallmatrix} \right] ,  &
    \deg(y_2) &\coloneq \left[ \begin{smallmatrix}
        \relphantom{-}1 \\ -1 \\ \relphantom{-}1 
      \end{smallmatrix} \right] , &
    \deg(y_3) &\coloneq \left[ \begin{smallmatrix}
        \relphantom{-}0 \\ \relphantom{-}1 \\ -1 
      \end{smallmatrix} \right] , &
   \deg(y_4) &\coloneq \left[ \begin{smallmatrix}
        0 \\ 0 \\ 1 
      \end{smallmatrix} \right] .
  \end{xalignat*}
  With this choice of basis for $\Pic(X)$, the nef cone equals the positive
  orthant.  Let $Z \subset X$ be the subscheme consisting of the three points
  $[1:1:1:1:1]$, $[2:1:3:1:5]$, and $[7:1:11:1:13]$ (expressed in Cox
  coordinates) and let $I$ be the corresponding $B$-saturated $S$-ideal.
  \emph{Macaulay2}~\cite{M2} shows that the minimal free resolution of $S/I$
  has the form
  \begin{align*}
    S^1 \gets
    \begin{matrix} 
      S(-1,0,-1)^1 \\[-3pt] 
      \oplus       \\[-3pt] 
      S(0,-1,-1)^1 \\[-3pt] 
      \oplus       \\[-3pt] 
      S(-1,-1,0)^1 \\[-3pt] 
      \oplus       \\[-3pt]
      S(0,0,-3)^1  \\[-3pt] 
      \oplus       \\[-3pt]  
      S(-3,0,0)^1
    \end{matrix} 
    &\gets 
    \begin{matrix} 
      S(0,-2,-2)^1  \\[-3pt] 
      \oplus        \\[-3pt] 
      S(-1,-1,-1)^2 \\[-3pt] 
      \oplus        \\[-3pt] 
      S(-2,-1,0)^1  \\[-3pt] 
      \oplus        \\[-3pt]
      S(-1,0,-3)^1  \\[-3pt] 
      \oplus        \\[-3pt]  
      S(-3,0,-1)^1
    \end{matrix} 
    \gets 
    \begin{matrix} 
      S(-1,-1,-2)^1 \\[-3pt] 
      \oplus        \\[-3pt] 
      S(-2,-1,-1)^1
    \end{matrix} 
    \gets 0 \, . 
    \intertext{However, there is a virtual resolution of $S/I$ having the form}
    S^1 \gets \,\,\, S(0,-2,0)^3 \,\,\, &\gets \,\,\, S(0,-3,0)^2 \,\,\, \gets
                                          0 \, . \qedhere
  \end{align*}
\end{example}

\begin{example}
  \label{exa:completeIntersectionP1^2}
  Let $Z \subset \PP^1 \times \PP^1$ be the subscheme consisting of $m$
  general points and let $I$ be the corresponding $B$-saturated $S$-ideal.
  Not only is there a Hilbert--Burch-type virtual resolution of $S/I$, it can
  be chosen to be a Koszul complex.  Since
  $\dim H^0\bigl( \PP^1 \times \PP^1, \cO_{\PP^1 \times \PP^1}(i,j) \bigr) =
  (i+1)(j+1)$, the generality of the points implies that
  $\dim H^0\bigl( \PP^1 \times \PP^1, \cO_{Z}(i,j) \bigr) = \min\{(i+1)(j+1),
  m\}$.  Hence, if $m = 2k$ for some $k \in \NN$, then two independent global
  sections of $\cO_{\PP^1\times\PP^1}^{}(1, k)$ vanish on $Z$. Using this
  pair, we obtain a virtual resolution of $S/I$ of the form
  $S \gets S(-1,-k)^2 \gets S(-2,-2k) \gets 0$.  On the other hand, if
  $m = 2k + 1$, then there are independent global sections of
  $\cO_{\PP^1\times\PP^1}^{}(1,k)$ and $\cO_{\PP^1\times\PP^1}^{}(1,k+1)$ that
  vanish on $Z$, so we obtain a virtual resolution of $S/I$ having the form
  \[
  S \gets 
  \begin{matrix} 
    S(-1,-k) \\[-3pt] 
    \oplus \\[-3pt] 
    S(-1,-k-1)
  \end{matrix} 
  \gets S(-2, -2k-1) \gets 0 \, . \qedhere
  \]
\end{example}

\begin{question}
  Does Proposition~\ref{pro:pointsInGeneralPosition} hold for any punctual
  scheme $Z$ in a smooth toric surface  $X$?
\end{question}

\section{Geometric Applications}
\label{sec:applications}

\noindent
In this section, we showcase four geometric applications of virtual
resolutions.  In particular, each of these support our overarching thesis that
replacing minimal free resolutions by virtual resolutions yields the best
geometric results for subschemes of $\PP^{\bbn}$.

\subsection*{Unmixedness}

Given a subscheme that has a virtual resolution whose length equals its
codimension, we prove an unmixedness result.  Closely related to
Proposition~\ref{pro:resolutionBound}, this extends the classical unmixedness
result for arithmetically Cohen--Macaulay subschemes, see Corollary~18.14 in
\cite{eisenbud-book}.

\begin{proposition}[Unmixedness]
  \label{pro:unmixedness}
  Let $Z \subset \PP^{\bbn}$ be a closed subscheme of codimension $c$ and let
  $I$ be the corresponding $B$-saturated $S$-ideal.  If $S/I$ has a virtual
  resolution of length $c$, then every associated prime of $I$ has codimension
  $c$.
\end{proposition}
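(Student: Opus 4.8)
The plan is to derive the statement almost directly from part~(i) of Proposition~\ref{pro:resolutionBound}, complemented by the trivial lower bound on the codimension of any prime containing $I$.

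Here is the outline. Since $I$ is $B$-saturated, we have $H_B^0(S/I) = 0$, so no associated prime of $S/I$ contains the irrelevant ideal $B$. Now fix an arbitrary associated prime $Q$ of $I$. Setting $M \coloneq S/I$, the prime $Q$ is an associated prime of $M$ with $B \not\subseteq Q$, and by hypothesis $M$ admits a virtual resolution $F = [F_0 \gets F_1 \gets \dotsb \gets F_c \gets 0]$ of length $c$. Part~(i) of Proposition~\ref{pro:resolutionBound} then yields $\codim Q \leq c$. For the opposite inequality, note that $I$ is the $B$-saturated ideal of the codimension-$c$ subscheme $Z$, so $\codim_S I = c$; hence every minimal prime of $I$ has codimension at least $c$, and since $Q$ contains one of them we get $\codim Q \geq c$. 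Combining the two bounds gives $\codim Q = c$, which is the claim.

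All of the real work sits inside Proposition~\ref{pro:resolutionBound}, whose proof localizes $F$ at $Q$ — legitimate precisely because $B \not\subseteq Q$ forces the $B$-supported higher homology of $F$ to vanish after localizing, so that $F_Q$ is a genuine free $S_Q$-resolution of $M_Q$ — and then compares projective dimension with codimension over the regular local ring $S_Q$. If one instead wanted a self-contained argument, the same reasoning can be unwound: localizing $F$ at $Q$ shows $\pdim_{S_Q} M_Q \leq c$, so the Auslander--Buchsbaum formula gives that the depth of $M_Q$ over $S_Q$ is at least $\codim Q - c$; but $Q$ being an associated prime of $M$ forces this depth to be $0$, whence $\codim Q \leq c$ exactly as before.

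The one point that requires care — and the reason the conclusion is a genuine unmixedness statement rather than merely a remark about the support of $Z$ — is that $B$-saturatedness must be invoked to supply the hypothesis $B \not\subseteq Q$ for \emph{every} associated prime of $I$, including the embedded ones, so that Proposition~\ref{pro:resolutionBound}(i) applies uniformly. Beyond this bookkeeping I do not anticipate any serious obstacle.
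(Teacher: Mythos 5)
Your proposal is correct and follows essentially the same route as the paper: the paper's proof likewise gets $\codim Q \geq c$ from the hypothesis on $Z$, notes that $B$-saturation prevents any associated prime from containing $B$, and then localizes the length-$c$ virtual resolution at $Q$ to bound $\codim Q$ above via the projective-dimension-versus-codimension inequality (i.e., the argument of Proposition~\ref{pro:resolutionBound}(i), which you invoke directly).
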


\begin{proof}
  Let $Q$ be an associated prime of $I$ and let $F$ denote a virtual
  resolution of $S/I$ having length $c$. Our hypothesis on $Z$ implies that
  $\codim Q \geq c$.  Since $Q$ does not contain the irrelevant ideal $B$,
  localizing at $Q$ annihilates the homology of $F$ that is supported at $B$.
  Thus, the complex $F_Q$ is a free $S_Q$-resolution of $(S/I)_{Q}$. Since the
  projective dimension of a module is at least its
  codimension~\cite{eisenbud-book}*{Proposition~18.2}, it follows that
  $ c \geq \codim_{S_Q} (S/I)_Q = \codim Q\geq c$.
\end{proof}

\subsection*{Deformation Theory}  

Using virtual resolutions, we generalize results about unobstructed
deformations for arithmetically Cohen--Macaulay subschemes of codimension two,
arithmetically Gorenstein subschemes of codimension three, and complete
intersections.  To accomplish this, we first observe that the
Piene--Schlessinger Comparison Theorem~\cite{piene-schlessinger} applies more
generally by relating the deformations of a closed subscheme
$Y \subseteq \PP^{\bbn}$ with deformations of a corresponding graded module
over the Cox ring.

\begin{theorem}[Comparison Theorem]
  \label{thm:comparison}
  Let $Y \subset \PP^{\bbn}$ be a closed subscheme and let $I$ be a
  homogeneous $S$-ideal defining $Y$ scheme-theoretically and generated in
  degrees $\bbd_1, \bbd_2, \dotsc, \bbd_s$.  If the natural map
  $(S/I)_{\bbd_i} \to H^0\bigl( Y,\cO_Y^{}(\bbd_i) \bigr)$ is an isomorphism
  for all $1 \leq i \leq s$, then the embedded deformation theory of
  $Y \subset \PP^{\bbn}$ is equivalent to the degree zero embedded deformation
  theory of $\variety(I) \subset \Spec(S)$.
\end{theorem}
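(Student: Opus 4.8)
The plan is to follow the strategy of Piene and Schlessinger~\cite{piene-schlessinger}, adapted to the $\ZZ^r$-graded setting over the Cox ring $S$. The key point is to set up a comparison between two deformation functors: on one side, the functor $\mathrm{Hilb}_Y$ of embedded deformations of $Y \subset \PP^{\bbn}$; on the other side, the functor $\mathrm{Def}^0_{\variety(I)}$ of degree-zero embedded deformations of $\variety(I) \subset \Spec(S)$, which amounts to deforming the ideal $I$ through $\ZZ^r$-graded ideals with the same Hilbert function in the relevant degrees. First I would recall that embedded deformations of $Y$ over an Artinian base $A$ are classified by flat families of ideal sheaves $\mathcal{I}_{Y_A} \subset \cO_{\PP^{\bbn}_A}$ restricting to $\mathcal{I}_Y$, and that the tangent space is $H^0\bigl(Y, \mathcal{N}_{Y/\PP^{\bbn}}\bigr)$, while obstructions lie in $H^1\bigl(Y, \mathcal{N}_{Y/\PP^{\bbn}}\bigr)$. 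Symmetrically, degree-zero embedded deformations of $\variety(I)$ are governed by $\bigl(\Hom_S(I, S/I)\bigr)_{\boldzero}$ in degree zero and obstructed in $\bigl(\operatorname{Ext}^1_S(I,S/I)\bigr)_{\boldzero}$.

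The heart of the argument is a natural transformation between these two functors, which I would build by sheafification: given a graded deformation $I_A \subset S \otimes_{\kk} A$, one obtains $\widetilde{I_A} \subset \cO_{\PP^{\bbn}_A}$, and the hypothesis that $(S/I)_{\bbd_i} \to H^0\bigl(Y, \cO_Y(\bbd_i)\bigr)$ is an isomorphism for each generating degree $\bbd_i$ is exactly what guarantees that this sheafification is reversible — one recovers $I_A$ from $\widetilde{I_A}$ by taking the submodule of $S\otimes A$ generated in degrees $\bbd_1, \dotsc, \bbd_s$. Concretely, I would show that the two functors have isomorphic tangent spaces and that there is a compatible map on obstruction spaces, so that the transformation is smooth and injective on tangent spaces, hence an isomorphism of functors. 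The comparison of tangent spaces reduces to showing
\[
  \bigl(\Hom_S(I, S/I)\bigr)_{\boldzero} \;\cong\; H^0\bigl(Y, \mathcal{N}_{Y/\PP^{\bbn}}\bigr),
\]
which follows from the local-to-global sheafification together with the isomorphism hypothesis in degrees $\bbd_i$: a graded homomorphism $I \to S/I$ is determined by the images of the generators (degree $\bbd_i$ elements), and the hypothesis lets us match these with sections of $\cHom(\mathcal{I}_Y, \cO_Y) = \mathcal{N}_{Y/\PP^{\bbn}}$ after twisting.

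The main obstacle I anticipate is controlling the higher cohomology and the obstruction comparison without a Cohen--Macaulay or saturation hypothesis beyond what is stated: unlike in Piene--Schlessinger's projective-space setting, here $I$ need not be $B$-saturated, and the irrelevant ideal $B$ of $\PP^{\bbn}$ has several minimal primes, so local cohomology $H^i_B$ can be nonzero in several homological degrees. The isomorphism hypothesis in degrees $\bbd_i$ is precisely designed to sidestep this — it forces the relevant strands of $H^1_B(S/I)$ to vanish in those degrees — but verifying that a graded deformation of $I$ stays ``generated in the degrees $\bbd_i$'' under an infinitesimal extension, and that obstructions computed module-theoretically agree with those computed sheaf-theoretically, will require a careful diagram chase using the exact sequence relating $\operatorname{Ext}^i_S(I, S/I)$ with $H^i\bigl(Y,\mathcal{N}_{Y/\PP^{\bbn}}\bigr)$ through the local cohomology of $\cHom$. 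Once this bookkeeping is in place, smoothness of the natural transformation follows formally from the standard criterion (surjectivity on tangent spaces and injectivity on obstruction spaces), yielding the claimed equivalence of deformation theories.
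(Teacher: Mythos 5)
Your proposal follows the same route as the paper, which simply observes that Piene and Schlessinger's proof of their Comparison Theorem carries over verbatim once projective space and its coordinate ring are replaced by $\PP^{\bbn}$ and its Cox ring $S$; your sketch is a reasonable unpacking of what that adaptation involves, including the correct identification of the hypothesis on $(S/I)_{\bbd_i}\to H^0\bigl(Y,\cO_Y^{}(\bbd_i)\bigr)$ as the ingredient making sheafification reversible on generators. The concerns you raise about the irrelevant ideal and local cohomology are legitimate points to check, but they do not change the structure of the argument.
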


\begin{proof}
  Piene and Schlessinger's proof of the Comparison
  Theorem~\cite{piene-schlessinger} goes through essentially verbatim by
  replacing projective space and its coordinate ring with $\PP^{\bbn}$ and its
  Cox ring $S$.
\end{proof}

\begin{proof}[Proof of Theorem~\ref{thm:unobstructedDefs}]
  If $\bbe \in \reg(S/I)$ and $\bF$ is the virtual resolution of the pair
  $(S/I,\bbe)$, then we have $\HH_0(F) = S/J$ for some ideal $J$ whose
  $B$-saturation equals $I$.  By Theorem~\ref{thm:winnow}, the generating
  degrees for $J$ are a subset of those for $I$.  It follows that
  $(S/J)_{\bbd} = H^0\bigl( Y,\cO_Y^{}(\bbd) \bigr)$ for each degree $\bbd$ of
  a generator for $J$.  Therefore, Theorem~\ref{thm:comparison} implies that
  the embedded deformation theory of $Y$ is equivalent to the degree zero
  embedded deformation theory of the subscheme $\variety(J) \subset \Spec(S)$.
  \begin{enumerate}[\upshape (i)]
  \item The virtual resolution $\bF$ has length $2$, so
    Proposition~\ref{pro:resolutionBound} implies that $\bF$ is the minimal
    free resolution of $S/J$.  Thus, $S/J$ is Cohen--Macaulay of codimension
    $2$, and \cite{artin}*{\S5} or \cite{schaps} implies that its embedded
    deformations are unobstructed.
  \item The virtual resolution $\bF$ has length $3$ and
    $\min\{ n_i + 1 : 1 \leq i \leq r \} \geq 3$, so
    Proposition~\ref{pro:resolutionBound} implies that $\bF$ is the minimal
    free resolution of $S/J$.  Thus, $S/J$ is Gorenstein of codimension $3$,
    and Theorem~2.1 in \cite{miro-roig} implies that its embedded deformations
    are unobstructed.
  \item Let $c \coloneq \codim Y$.  As $F$ is a Koszul complex, we have
    $F_1 = \bigoplus_{i=1}^c S(-\bbd_i)$.  Since $F$ is a virtual resolution
    of $S/I$, we also see that $\widetilde{J}$ equals the ideal sheaf
    $\mathcal{I}_Y$ for $Y \subset \PP^{\bbn}$.  The complex
    $\widetilde{F_1} \gets \widetilde{F_2} \gets \widetilde{F_3} \gets \dotsb$
    is a locally-free resolution of $\mathcal{I}_Y$, so the normal bundle of
    $Y$ in $\PP^{\bbn}$ is
    $\mathcal{N}_{Y/\PP^{\bbn}} \coloneq \cHom(\mathcal{I}_Y /
    \mathcal{I}_Y^2, \cO_Y^{}) \cong \bigoplus_{i=1}^c \cO_Y^{}(\bbd_i)$.  For
    a fixed deformation of $Y$, the obstruction is a \v{C}ech cocycle in
    $H^1(Y, \mathcal{N}_{Y/\PP^{\bbn}})$ determined by local lifts of the
    syzygies; see Theorem~6.2 in \cite{hartshorne-deformation}.  However,
    since $Y$ is a scheme-theoretic complete intersection, its syzygies are
    all Koszul, so we can define this cocycle by lifting those Koszul syzygies
    globally on $Y$.  Hence, the \v{C}ech cocycle in
    $H^1(Y, \mathcal{N}_{Y/\PP^{\bbn}})$ is actually a coboundary and the
    obstruction vanishes.\qedhere
  \end{enumerate}
\end{proof}
 
\begin{remark}
  In part~(ii) of Theorem~\ref{thm:unobstructedDefs}, we suspect that the
  hypothesis $\min\{n_i\}\geq 2$ is unnecessary.
\end{remark}

\begin{example}
  Consider the hyperelliptic curve $C \subset \PP^1 \times \PP^2$ defined in
  Example~\ref{exa:curveI}.  Applying part~(i) of
  Theorem~\ref{thm:unobstructedDefs}, the virtual resolution from
  \eqref{eqn:aCMcurve} implies that $C$ has unobstructed embedded
  deformations.  Alternatively, this curve has a virtual resolution
  \[
    S \xleftarrow{\;\; \left[
        \begin{smallmatrix}
          f & g
        \end{smallmatrix}
      \right] \;\;} 
    \begin{matrix} 
      S(-2,-2) \\[-3pt] 
      \oplus   \\[-3pt] 
      S(-3,-1) 
    \end{matrix} 
    \xleftarrow{\;\; \left[
        \begin{smallmatrix}
          -g \\ \phantom{-}f
        \end{smallmatrix} \right] \;\;}
    S(-5,-3) \longleftarrow 0, 
  \]
  where
  $f = x_{1,0}^2 x_{2,0}^2 + x_{1,1}^2 x_{2,1}^2 + x_{1,0}^{} x_{1,1}^{}
  x_{2,2}^2$ and
  $g = x_{1,0}^3 x_{2,2}^{} + x_{1,1}^3 ( x_{2,0}^{} + x_{2,1}^{})$, so
  part~(iii) of Theorem~\ref{thm:unobstructedDefs} provides another proof that
  this curve has unobstructed embedded deformations.
\end{example}

\subsection*{Regularity of Tensor Products}

Using virtual resolutions, we can prove bounds for the regularity of a tensor
product, similar to the bounds obtained for projective space; see
Proposition~1.8.8 in \cite{lazarsfeld}. Let $\bbe_i$ denote the $i$-th
standard basis vector in $\ZZ^r = \Pic(\PP^{\bbn})$.

\begin{proposition}
  \label{pro:regularityTensor}
  Let $\cE$ and $\cF$ be coherent $\cO_{\PP^{\bbn}}^{}$-modules such that
  ${\mathcal{T}\!\!or}^j(\cE, \cF) = 0$ for all $j > 0$.  If
  $\bba \in \reg \cE$ and $\bbb \in \reg \cF$, then we have
  $\bba + \bbb + \boldone - \bbe_i \in \reg(\cE\otimes \cF) $ for each
  $1 \leq i \leq r$.
\end{proposition}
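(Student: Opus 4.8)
The plan is to use the virtual resolution constructed in the proof of Proposition~\ref{pro:splendidComplexesExist} to resolve $\cE$ by vector bundles of the form $\cO_{\PP^{\bbn}}^{}(-\bbu)\otimes_\kk V_{\bbu}$ with $\boldzero\leq\bbu\leq\bbn$, and then to analyze the cohomology of $\cE\otimes\cF$ by twisting this resolution by $\cF$ and running the hypercohomology spectral sequence. Concretely, first I would fix a sufficiently positive $\bbc\in\ZZ^r$ and apply the construction in the proof of Proposition~\ref{pro:splendidComplexesExist} to $\cE(\bbc)$ (so that the relevant higher cohomology groups $H^q(\PP^{\bbn},\Omega_{\PP^{\bbn}}^{\bbu}\otimes\cE(\bbu+\bbc))$ vanish for $q>0$): this yields a locally-free resolution
\[
  0\gets \cE(\bbc)\gets \cG_0\gets \cG_1\gets\dotsb\gets \cG_{\abs{\bbn}}\gets 0,
  \qquad
  \cG_i=\bigoplus_{\substack{\boldzero\leq\bbu\leq\bbn\\ \abs{\bbu}=i}}
  \cO_{\PP^{\bbn}}^{}(-\bbu)\otimes_\kk H^0\bigl(\PP^{\bbn},\Omega_{\PP^{\bbn}}^{\bbu}\otimes\cE(\bbu+\bbc)\bigr).
\]
Since $\Tor^j(\cE,\cF)=0$ for $j>0$, tensoring this resolution with $\cF$ stays exact, giving a locally-free resolution of $(\cE\otimes\cF)(\bbc)$ whose $i$-th term is a direct sum of bundles of the form $\cF(-\bbu+\bbc)$ with $\abs{\bbu}=i$.

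Next I would run the hypercohomology spectral sequence of this complex. To compute $H^k\bigl(\PP^{\bbn},(\cE\otimes\cF)(\bbc+\bbt)\bigr)$ for a chosen twist $\bbt$, the $E_1$-page is built from $\bigoplus_{\abs{\bbu}=i}H^q\bigl(\PP^{\bbn},\cF(\bbc+\bbt-\bbu)\bigr)$. The regularity hypothesis $\bbb\in\reg\cF$ controls exactly these groups: for each $\bbu$ with $\boldzero\leq\bbu\leq\bbn$, the twist $\bbb-\bbu$ lies in the union $\bigcup(\bbb-\bbq+\NN^r)$ over $\abs{\bbq}=\abs{\bbu}-1$ provided $\abs{\bbu}\geq 1$ and $\bbu\leq\bbn$ (one checks $\bbu$ can be written as $\bbq+\bbe_j$ for some $j$ with $u_j\geq 1$), so $H^q\bigl(\PP^{\bbn},\cF(\bbc+\bbt-\bbu)\bigr)=0$ whenever $q\geq 1$ and $\bbc+\bbt\geq\bbb$. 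Here, though, I must choose $\bbc$ large to make the first resolution work, which shifts everything; the remedy is to translate back. The honest statement to prove is: if $\bba\in\reg\cE$, $\bbb\in\reg\cF$, then for every $i\geq 1$ the group $H^i\bigl(\PP^{\bbn},(\cE\otimes\cF)(\bbp-\bba')\bigr)=0$ for all $\bba'\in\NN^r$ with $\abs{\bba'}\geq i-1$ realizable as claimed, where $\bbp=\bba+\bbb+\boldone-\bbe_i$. The bookkeeping reduces to: the $E_1$-entry contributing to $H^i$ in total degree involves $H^q\bigl(\PP^{\bbn},\text{(bundle resolving }\cE\text{ in step }i-q)\otimes\cF(\text{twist})\bigr)$, and I need all such entries with $q>0$ to vanish using $\reg\cF$, while the row $q=0$ reduces the computation to the cohomology of $\cE$ twisted appropriately, controlled by $\reg\cE$. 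The shift $\boldone-\bbe_i$ in the conclusion is precisely the total twist $\abs{\bbn}$-type correction one accrues from summing $\bbu$ over the resolution combined with the two regularity thresholds; I would track it by noting that in the step-$k$ term $\bbu$ ranges with $\abs{\bbu}=k\leq\abs{\bbn}$, but the only $\bbu$ that can contribute nonzero cohomology after twisting by the regularity-boundary amount are those with $\bbu\leq\boldone$, i.e.\ $\bbu\in\{\boldzero,\bbe_1,\dotsc,\bbe_r\}$, which caps the shift at $\boldone$ and the $-\bbe_i$ comes from the Tor-vanishing allowing one to combine the two resolutions optimally in the $i$-th slot.

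The main obstacle will be the precise combinatorial verification that the hypercohomology spectral sequence degenerates (or at least has all the needed $E_\infty$-entries vanishing) at the twist $\bba+\bbb+\boldone-\bbe_i$, and in particular pinning down why $\bbe_i$ rather than $\boldzero$ suffices in the $i$-th direction — this is the content that makes the bound sharp and is the analogue of the classical fact that on $\PP^n$ one gains nothing past $\reg\cE+\reg\cF$. I expect to handle this by an induction on $r$ (peeling off the last factor via the Künneth formula, as in Lemma~\ref{lem:vanishing bit}), using the base case $r=1$ where the statement is the classical regularity bound for tensor products on $\PP^{n_1}$ from \cite{lazarsfeld}*{Proposition~1.8.8}. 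A secondary technical point is verifying that $\Tor^j(\cE,\cF)=0$ genuinely propagates through the external-tensor resolution of the diagonal rather than just an abstract locally-free resolution of $\cE$; since the terms $\cG_i$ are direct sums of line bundles, this is automatic (line bundles are flat), so the only real work is the spectral-sequence bookkeeping together with the Künneth reduction.
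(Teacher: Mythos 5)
Your proposal contains a genuine gap, in two places. First, the step ``fix a sufficiently positive $\bbc$ \dots the remedy is to translate back'' does not work: if $\bbc$ is taken large enough to kill the higher cohomology groups $H^q(\PP^{\bbn},\Omega^{\bbu}_{\PP^{\bbn}}\otimes\cE(\bbu+\bbc))$ by Fujita vanishing, then the resulting resolution of $(\cE\otimes\cF)(\bbc)$ only controls cohomology of $\cE\otimes\cF$ at twists near $\bbc$, which is far from the target twist $\bba+\bbb+\boldone-\bbe_i$; there is no honest way to ``translate back.'' What is actually needed is the Beilinson-type resolution taken exactly at the regularity threshold $\bbc=\bba$, where one must accept the higher-cohomology contributions $H^p(\widetilde{M}(\bba)\otimes\Omega^{\bbu}(\bbu))$ in the terms of the complex and instead control the \emph{degrees} of the resulting line-bundle summands. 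This is precisely the content of Theorem~\ref{thm:linearResolutions} (via Lemma~\ref{lem:noHigherCohomology}): $\bbd\in\reg M$ if and only if $M(\bbd)$ admits a virtual resolution whose $i$-th term has generators in $\Delta_i+\NN^r$. The paper's proof applies this characterization in the forward direction to both $M(\bba)$ and $N(\bbb)$, tensors the two resolutions (which stays a virtual resolution by the Tor hypothesis), and then applies the characterization in the converse direction; your proposal never invokes this equivalence and so has no mechanism for converting cohomological information at the sharp threshold into the conclusion.

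Second, the entire content of the shift $\boldone-\bbe_i$ is the elementary but essential inclusion $\Delta_j+\Delta_k+\boldone-\bbe_i\subseteq\Delta_{j+k}+\NN^r$, which you explicitly defer as ``the main obstacle'' and never establish. Your heuristic for it — that ``the $-\bbe_i$ comes from \dots combining the two resolutions optimally in the $i$-th slot'' — conflates the index $i$ of the standard basis vector $\bbe_i\in\ZZ^r=\Pic(\PP^{\bbn})$ (which ranges over the factors of the product and is quantified in the statement) with a homological degree, so the explanation does not parse. The proposed Künneth induction on $r$ is not needed once the $\Delta$-inclusion is in hand, and without it the argument has no route to the stated twist. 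In short, the skeleton (line-bundle resolution of $\cE$, tensor with $\cF$ using Tor-vanishing, spectral sequence) points in the right direction, but both load-bearing steps — working at the regularity threshold rather than at a large twist, and the additivity of the sets $\Delta_i$ up to $\boldone-\bbe_i$ — are missing.
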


\begin{proof}
  Let $M$ and $N$ be the $B$-saturated $S$-modules corresponding to $\cE$ and
  $\cF$.  Since $M(\bba)$ is $\boldzero$\nobreakdash-regular,
  Theorem~\ref{thm:linearResolutions} implies that it has a virtual resolution
  $F_0\gets F_1\gets \dotsb$, where the degree of each generator of $F_i$
  belongs to $\Delta_i+\NN^r$.  Similarly, $N(\bbb)$ has a virtual resolution
  $G_0 \gets G_1\gets \dotsb$ satisfying the same conditions.  The vanishing
  of Tor-groups implies that $H \coloneq F\otimes G$ is a virtual resolution of
  $M(\bba) \otimes N(\bbb)$.  Since
  $\Delta_i + \Delta_j + \boldone - \bbe_i\subseteq \Delta_{i+j}+\NN^r$, it
  follows that the degree of each generator of the free module
  $H(\boldone - \bbe_i)_k$ belongs to $\Delta_k+\NN^r$, for each $k$.  Hence
  Theorem~\ref{thm:linearResolutions} implies that
  $\bigl(M(\bba)\otimes N(\bbb)\bigr)(\boldone - \bbe_i)$ is
  $\boldzero$-regular.
\end{proof}

\begin{remark}
  Proposition~\ref{pro:regularityTensor} is sharp.  When $r=1$, it recovers
  Proposition~1.8.8 in \cite{lazarsfeld}, as the higher Tor-groups vanish
  whenever one of the two sheaves is locally free.  If $r > 1$, then it is
  possible to have $\boldzero$-regular sheaves whose tensor product is not
  $\boldzero$-regular.  For instance, if $D, {D'} \subset \PP^1 \times \PP^2$
  are degree $(1,1)$-hypersurfaces, then the product
  $\cO_D^{} \otimes \cO_{D'}^{}$ is isomorphic to the structure sheaf
  $\cO_C^{}$ for a curve $C$ with $H^1\bigl(C, \cO_C^{}(0,-1) \bigr) \neq 0$.
\end{remark}

\subsection*{Vanishing of Higher Direct Images}

A relative notion of Castelnuovo--Mumford regularity with respect to a given
morphism is defined in terms of the vanishing of derived pushforwards; see
Example~1.8.24 in \cite{lazarsfeld}.  Just as virtual resolutions yield
sharper bounds on multigraded Castelnuovo--Mumford regularity, they also
provide sharper bounds for the vanishing of derived pushforwards.  For some
$1 \leq s \leq r$, fix a subset
$\{ i_1, i_2, \dotsc, i_s \} \subseteq \{1, 2, \dotsc, r \}$ and let $Y$ be
the corresponding product
$\PP^{n_{i_1}} \times \PP^{n_{i_2}} \times \dotsb \times \PP^{n_{i_r}}$ of
projective spaces.  The canonical projection $\pi \colon \PP^{\bbn} \to Y$
induces an inclusion $\pi^* \colon \Pic(Y) \to \Pic(\PP^{\bbn})$ and we write
$\rho \colon \Pic(\PP^{\bbn}) \to \coker(\pi^*) = \Pic(\PP^{\bbn}) / \Pic(Y)$.

\begin{proposition}
  \label{pro:pushforward}
  Let $M$ be a finitely-generated $\ZZ^r$-graded $S$-module and consider
  $\bba \in \ZZ^{r}$.  If we have $\rho(\bba) \in \rho(\reg M)$, then it
  follows that $\mathbf{R}^i \pi_{*} \widetilde{M}(\bba) = 0$ for all $i > 0$.
\end{proposition}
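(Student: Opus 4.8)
The plan is to reduce the statement to the vanishing of sheaf cohomology on the fibers of $\pi$, then invoke the theory of virtual resolutions developed above. First I would recall the cohomology and base change machinery: since $\pi \colon \PP^{\bbn} \to Y$ is a flat projective morphism whose fibers are products of projective spaces, the sheaf $\mathbf{R}^i \pi_{*} \widetilde{M}(\bba)$ is governed fiberwise, so it suffices to show that $H^i\bigl(\pi^{-1}(y), \widetilde{M}(\bba)|_{\pi^{-1}(y)}\bigr) = 0$ for all $i>0$ and all $y \in Y$. Writing $\PP^{\bbn} = Y \times Y'$ where $Y'$ is the product of the remaining factors $\PP^{n_j}$ for $j \notin \{i_1, \dots, i_s\}$, each fiber is a copy of $Y'$, and the restriction of $\widetilde{M}(\bba)$ to that fiber is a twist of the restriction of $\widetilde{M}$ by the $Y'$-component of $\bba$.

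Next I would use the hypothesis $\rho(\bba) \in \rho(\reg M)$. By definition of $\rho$ as the quotient map $\Pic(\PP^{\bbn}) \to \Pic(\PP^{\bbn})/\Pic(Y)$, this means there exists $\bbc \in \reg M$ and $\bbf \in \Pic(Y)$ with $\bba = \bbc + \pi^*\bbf$; in coordinates, $\bba$ and $\bbc$ agree in all entries indexed by $\{1,\dots,r\}\setminus\{i_1,\dots,i_s\}$. The key point is then that the $Y'$-component of $\bba$ equals the $Y'$-component of some element of $\reg M$, and regularity is preserved under increasing the remaining (the $Y$-indexed) components arbitrarily, so by the properties of multigraded regularity from \cite{maclagan-smith}, we may assume $\bbc \leq \bba$ with $\bbc \in \reg M$ having the same $Y'$-entries as $\bba$. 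For such $\bbc$, the sheaf $\widetilde{M}(\bbc)$ has vanishing higher cohomology on all of $\PP^{\bbn}$, hence on each fiber $\pi^{-1}(y) \cong Y'$; and on that fiber, $\widetilde{M}(\bba)|_{\pi^{-1}(y)}$ equals $\widetilde{M}(\bbc)|_{\pi^{-1}(y)}$ (they differ only by a twist coming from $\Pic(Y)$, which restricts trivially to each fiber). Thus the fiberwise higher cohomology vanishes, and by cohomology and base change $\mathbf{R}^i\pi_{*}\widetilde{M}(\bba) = 0$ for $i>0$.

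The main obstacle I anticipate is not the fiberwise vanishing, which is essentially formal once one unwinds the product structure, but rather justifying the reduction "we may assume $\bbc \leq \bba$ with the same $Y'$-entries." This requires knowing that if $\bbc \in \reg M$ then $\bbc + \bbe_j \in \reg M$ for $j$ indexing a $Y$-factor — more precisely, that we can move within $\reg M$ in the $\pi^*\Pic(Y)$ directions to land below $\bba$. This should follow from the fact that $\reg M$ is closed under addition of standard basis vectors (a consequence of the local cohomology definition and the long exact sequences relating $M$ and $M(\bbe_j)$, cf.\ \cite{maclagan-smith}*{Proposition~1.2} style results), combined with the observation that only the $Y$-indexed entries of $\bba$ might fail to dominate those of the chosen representative $\bbc$; since $\rho(\bba) = \rho(\bbc)$ already forces equality in the $Y'$-indexed entries, translating $\bbc$ upward in the $Y$-directions by a large enough amount puts it below $\bba$. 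Alternatively, one could bypass the fiber argument entirely and argue directly: apply the virtual resolution $F$ of $M(\bbc)$ from Theorem~\ref{thm:linearResolutions}, note each term is a sum of line bundles $\cO_{\PP^{\bbn}}(-\bbu)$ with $\bbu \in \Delta_i + \NN^r$, and check that $\mathbf{R}^i\pi_*$ of each such line bundle twisted back up to $\bba$ vanishes by the Künneth formula together with the explicit description of $\Delta_i$ — this is the same flavor of bookkeeping as in Lemma~\ref{lem:vanishing bit}, so I would present whichever of the two arguments is shorter.
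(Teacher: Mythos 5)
Your primary argument has a genuine gap at the step ``the sheaf $\widetilde{M}(\bbc)$ has vanishing higher cohomology on all of $\PP^{\bbn}$, hence on each fiber.'' Vanishing of global $H^{>0}$ does not localize to fibers: on $\PP^1\times\PP^1$ with $\pi$ the first projection, the line bundle $\cO(-1,-2)$ has no higher cohomology at all (by K\"unneth, since $\cO_{\PP^1}(-1)$ has none in any degree), yet its restriction to every fiber is $\cO_{\PP^1}(-2)$ with nonzero $H^1$, and indeed $\mathbf{R}^1\pi_*\cO(-1,-2)\cong\cO_{\PP^1}(-1)\neq 0$. What regularity provides is vanishing for an entire cone of twists, and one must exploit that extra strength; a single global vanishing statement cannot be pushed down to fibers. (Separately, the form of cohomology and base change you invoke requires $\widetilde{M}$ to be flat over $Y$, which is not assumed.) Your worry about arranging $\bbc\leq\bba$ within $\reg M$ is also an unnecessary detour: the paper simply applies the projection formula, $\mathbf{R}^i\pi_*\widetilde{M}(\bba+\pi^*\bbb)=\mathbf{R}^i\pi_*\widetilde{M}(\bba)\otimes\cO_Y^{}(\bbb)$, so one may replace $\bba$ by $\bba+\pi^*\bbb$ and assume outright that $\bba\in\reg M$.

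The alternative you sketch in your final sentence is essentially the paper's proof and is the argument to write down: after the projection-formula reduction, take the virtual resolution $G$ of the pair $(M,\bba)$ from Theorem~\ref{thm:winnow}, whose summands $S(-\bbc)$ satisfy $\bbc\leq\bba+\bbn$; then every term of $\widetilde{G}(\bba)$ is a sum of line bundles $\cO_{\PP^{\bbn}}^{}(-\bbc+\bba)$ with $-\bbc+\bba\geq-\bbn$, each of which has vanishing higher direct images, and the hypercohomology spectral sequence concludes. One caution if you instead use Theorem~\ref{thm:linearResolutions}: its statement only bounds generator degrees by $\Delta_i+\NN^r$, and elements of $\Delta_i$ can have $j$-th entry as negative as $-(n_j+1)$, for which $\cO_{\PP^{n_j}}(-n_j-1)$ does have top-degree cohomology on the fiber; so the termwise vanishing is not automatic from that bound alone, and you would need either the sharper bound $\boldzero\leq\bbu\leq\bbn$ visible in the explicit construction or a shifted bookkeeping in the spirit of Lemma~\ref{lem:vanishing bit}.
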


\begin{proof}
  Since $\rho(\bba)\in \rho(\reg M)$, we can choose $\bbb \in \Pic(Y)$ such
  that $\bba + \pi^* \bbb \in \reg M$.  The Projection
  Formula~\cite{Hartshorne}*{Exercise~III.8.3} gives
  $\mathbf{R}^i \pi_{*} \widetilde{M}(\bba) \otimes \cO_Y^{}(\bbb) =
  \mathbf{R}^i \pi_{*} \widetilde{M} (\bba+\pi^* \bbb)$ for all
  $\bbb \in \Pic(Y)$.  Hence, by replacing $\bba$ with $\bba+\pi^*\bbb$, we
  assume that $\bba$ itself lies in $\reg M$.

  Let $G$ be the virtual resolution of the pair $(M,\bba)$, and consider a
  summand $S(-\bbc)$ of $G$.  By definition, we have $\bbc \leq \bba + \bbn$.
  It follows that $-\bbn\leq -\bbc+\bba$ and
  $\mathbf{R}^i \pi_{*} \cO_{\PP^{\bbn}}^{}(-\bbc +\bba)= 0$ for all $i > 0$.
  From the hypercohomology spectral sequence
  $E_2^{p,q} \coloneq \mathbf{R}^p \widetilde{G_{-q}} \Longrightarrow
  \mathbf{R}^{p+q} \widetilde{M}$, we conclude that the higher direct images
  of $\widetilde{M}(\bba)$ also vanish.
\end{proof}

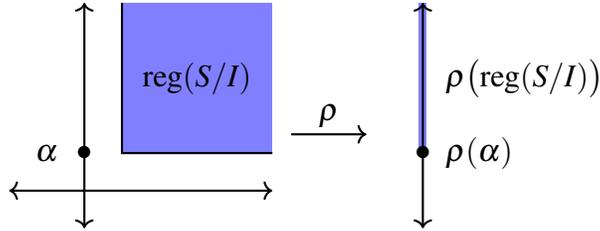
\begin{figure}[ht]
  \begin{tikzpicture}[scale=0.45]
    \fill[fill=blue,semitransparent] (8.9,1) -- (9.1,1) -- (9.1,5) -- (8.9,5);
    \fill[fill=blue, semitransparent] (1,4.99) -- (1,1) -- (4.99,1) --
    (4.99,4.99);
    \draw[thick, <->] (-2,0) -- (5,0);
    \draw[thick, <->] (0,-1) -- (0,5);
    \draw[thick,-] (1,5) -- (1,1) -- (5,1);

    \draw (-1,1) node {$\alpha$};
    \draw (0,1) node {$\bullet$};
    \draw (3,3) node {$\reg(S/I)$};
    \draw[thick, ->] (5.5,1.5) -- (7.5,1.5);
    \draw (6.5,2.0) node {$\rho$};
    \draw[thick, <->] (9,-1) -- ( 9,5);
    \draw (10.5,1) node {$\rho(\alpha)$};
    \draw (11.7,3) node {$\rho\bigl( \reg(S/I) \bigr)$};
    \draw (9,1) node {$\bullet$};
  \end{tikzpicture}
  \caption{Representation of $\reg(S/I)$ and its image under $\rho$.}
  \label{fig:pushforward}
\end{figure}

\begin{example}
  \label{exa:surface}
  Let $Y\subset \PP^1\times\PP^3$ be the surface defined by the $B$-saturated ideal 
  \[
    I = \left\langle
      \begin{array}{>{\raggedright\footnotesize}p{400pt}}
        $x_{1,1}^{} x_{2,1}^{2} + x_{1,0}^{} x_{2,0}^{} x_{2,2}^{} + x_{1,1}^{}
        x_{2,1}^{} x_{2,3}^{}$, $x_{1,0}^{2} x_{2,1}^{2} + x_{1,0}^{}
        x_{1,1}^{} x_{2,2}^{2} + x_{1,1}^{2} x_{2,0}^{} x_{2,3}^{}$, 
        $x_{1,0}^{} x_{2,1}^{4} - x_{1,0}^{} x_{2,0}^{} x_{2,2}^{3} +$
        $\relphantom{+} x_{1,0}^{} x_{2,1}^{3} x_{2,3}^{} - x_{1,1}^{} x_{2,0}^{2} x_{2,2}^{}
        x_{2,3}^{}$, 
        $x_{2,1}^{6} - x_{2,0}^{} x_{2,1}^{2} x_{2,2}^{3} + 2 x_{2,1}^{5}
        x_{2,3}^{} + x_{2,0}^{3} x_{2,2}^{2} x_{2,3}^{} - x_{2,0}^{}
        x_{2,1}^{} x_{2,2}^{3} x_{2,3}^{} + x_{2,1}^{4} x_{2,3}^{2}$
      \end{array}
    \right\rangle \, , 
  \]
  and let $\pi \colon \PP^1\times\PP^3\to\PP^1$ be the projection onto the
  first factor.  To understand the vanishing of the higher direct images of
  $\cO_Y^{}$, we consider the minimal free resolution of $S/I$ which has the
  form
  \[
    S^1 \gets 
    \begin{matrix} 
      S(-2,-2)^1 \\[-3pt] 
      \oplus     \\[-3pt] 
      S(-1,-2)^1 \\[-3pt] 
      \oplus     \\[-3pt] 
      S(-1,-4)^1 \\[-3pt] 
      \oplus     \\[-3pt] 
      S(0,-6)^1 
    \end{matrix} 
    \gets 
    \begin{matrix} 
      S(-2,-4)^2 \\[-3pt] 
      \oplus     \\[-3pt] 
      S(-1,-6)^2
    \end{matrix}
    \gets S(-2,-6)^1 \gets 0. 
  \]
  If we tensor the corresponding locally-free resolution with the line bundle
  $\cO_Y^{}(0,3)$, then none of the terms in the resulting complex have
  nonzero higher direct images, so $\mathbf{R}^1 \pi_* \cO_Y^{}(0,c) = 0$ for
  $c \geq 3$.  However, Proposition~\ref{pro:pushforward} yields a sharper
  vanishing result.  Since \emph{Macaulay2}~\cite{M2} shows that
  $(2,1) \in \reg(S/I)$, we have $\mathbf{R}^1 \pi_*\cO_Y^{}(0,c) = 0$ for all
  $c \geq 1$. This bound is sharp because a general fiber of $\pi$ is a curve
  of genus $1$.
\end{example}

\section{Questions}
\label{sec:open}

\noindent 
We expect that virtual resolutions will produce further analogues of theorems
involving minimal free resolutions on projective space.  We close by
highlighting several promising directions.

The first question is to find a notion of depth that controls the minimal
length of a virtual resolution and provides an analogue of the
Auslander--Buchsbaum Theorem.

\begin{question}
  Given an $S$-module $M$, what invariants of $M$ determine the length of the
  shortest possible virtual resolution of $M$?
\end{question}

\noindent
Even understanding this question for curves in $\PP^1\times\PP^2$ would be
compelling.  In light of Theorem~\ref{thm:unobstructedDefs}, this case would
produce unirationality results for certain parameter spaces of curves.

\begin{question}
  For which values of $d$, $e$, and $g$, does there exist a smooth curve in
  $\PP^1\times\PP^2$ of bidegree $(d,e)$ and genus $g$ with a virtual resolution
  of the form $S\gets F_1\gets F_2\gets 0$?
\end{question}

Proposition~\ref{pro:unmixedness} and Theorem~\ref{thm:unobstructedDefs}
suggest that having a virtual resolution whose length equals the codimension of
the underlying variety can have significant geometric implications. As these
results parallel the arithmetic Cohen--Macaulay property over projective
space, it would be interesting to seek out analogues of being arithmetically
Gorenstein.

\begin{question}
  Consider a positive-dimensional subscheme $Z\subseteq \PP^{\bbn}$ such that
  $\omega_Z=\cO_{\PP^{\bbn}}^{}(\bbd)|_{Z}$ for some $\bbd\in \ZZ^r$.  Is
  there a self-dual virtual resolution of $Z$?
\end{question}

\noindent 
It would also be interesting to better understand scheme-theoretic complete
intersections.

\begin{question}
  Develop an algorithm to determine if a subvariety $Z\subseteq \PP^{\bbn}$
  has a virtual resolution that is a Koszul complex.  This is already
  interesting in the case of points on $\PP^1\times\PP^1$.
\end{question}

Finally, we believe that many of these results should hold for more general
toric varieties.

\begin{question}
  Prove an analogue of Proposition~\ref{pro:splendidComplexesExist} for an
  arbitrary smooth toric variety.
\end{question} 

\begin{bibdiv}
  \begin{biblist}

    \bib{artin}{book}{
      author={Artin, Michael},
      title={\href{http://www.math.tifr.res.in/~publ/ln/tifr54.pdf}%
        {On deformations of singularities}},
      publisher={Tata Institute of Fundamental Research, Bombay},
      date={1976},
    }

    \bib{audin}{book}{
      author={Audin, Mich{\`e}le},
      title={\href{http://dx.doi.org/10.1007/978-3-0348-7221-8}%
        {The topology of torus actions on symplectic manifolds}},
      series={Progress in Mathematics~93},
      publisher={Birkh\"auser Verlag, Basel},
      date={1991},
      pages={181},
    }

    \bib{bayer-sturmfels}{article}{
      author={Bayer, Dave},
      author={Sturmfels, Bernd},
      title={\href{http://dx.doi.org/10.1515/crll.1998.083}%
        {Cellular resolutions of monomial modules}},
      journal={J. Reine Angew. Math.},
      volume={502},
      date={1998},
      pages={123--140},
      issn={0075-4102},
    }

    \bib{BC}{article}{
      author={Botbol, Nicol\'as},
      author={Chardin, Marc},
      title={\href{http://dx.doi.org/10.1016/j.jalgebra.2016.11.017}%
        {Castelnuovo Mumford regularity with respect to multigraded ideals}},
      journal={J. Algebra},
      volume={474},
      date={2017},
      pages={361--392},
    }
    
    \bib{buchsbaum-eisenbud-gor3}{article}{
      author={Buchsbaum, David A.},
      author={Eisenbud, David},
      title={\href{http://dx.doi.org/10.2307/2373926}
        {Algebra structures for finite free resolutions, and some structure
          theorems for ideals of codimension $3$}},
      journal={Amer. J. Math.},
      volume={99},
      date={1977},
      number={3},
      pages={447--485},
    }
	
    \bib{cox}{article}{
      author={Cox, David A.},
      title={\href{https://arxiv.org/abs/alg-geom/9210008}%
        {The homogeneous coordinate ring of a toric variety}},
      journal={J. Algebraic Geom.},
      volume={4},
      date={1995},
      number={1},
      pages={17--50},
    }

    \bib{caldararu}{article}{
      author={C{\u{a}}ld{\u{a}}raru, Andrei},
      title={\href{http://dx.doi.org/10.1090/conm/388/07256}%
        {Derived categories of sheaves: a skimming}},
      conference={
        title={Snowbird lectures in algebraic geometry},
      },
      book={
        series={Contemp. Math.~388},
        publisher={Amer. Math. Soc., Providence, RI},
      },
      date={2005},
      pages={43--75},
    }
    
    \bib{CLS}{book}{
      author={Cox, David A.},
      author={Little, John B.},
      author={Schenck, Henry K.},
      title={\href{http://dx.doi.org/10.1090/gsm/124}%
        {Toric varieties}},
      series={Graduate Studies in Mathematics~124},
      publisher={Amer. Math. Soc., Providence, RI},
      date={2011},
      pages={xxiv+841},
    }

    \bib{DS}{article}{
      author={Decker, Wolfram},
      author={Schreyer, Frank-Olaf},
      title={\href{http://dx.doi.org/10.1006/jsco.1999.0323}%
        {Non-general type surfaces in ${\bf P}^4$: some remarks on bounds and
          constructions}},
      journal={J. Symbolic Comput.},
      volume={29},
      date={2000},
      number={4-5},
      pages={545--582},
    }
    
    \bib{DFS}{article}{
      author={Deopurkar, Anand},
      author={Fedorchuk, Maksym},
      author={Swinarski, David},
      title={\href{http://dx.doi.org/10.14231/AG-2016-001}%
        {Toward GIT stability of syzygies of canonical curves}},
      journal={Algebr. Geom.},
      volume={3},
      date={2016},
      number={1},
      pages={1--22},
    }
    
    \bib{EGA3.1}{article}{
      label={EGA3I},
      author={Grothendieck, Alexander},
      author={Dieudonn\'e, Jean A.},
      title={\href{http://www.numdam.org/item?id=PMIHES_1961__11__5_0}%
        {\'El\'ements de g\'eom\'etrie alg\'ebrique. III. \'Etude
          cohomologique des faisceaux coh\'erents, Premi\`ere partie}},
      journal={Inst. Hautes \'Etudes Sci. Publ. Math.},
      number={11},
      date={1961},
      pages={5--167},
    }
    
    \bib{EGA3.2}{article}{
      label={EGA3II},
      author={Grothendieck, Alexander},
      author={Dieudonn\'e, Jean A.},
      title={\href{http://www.numdam.org/item?id=PMIHES_1963__17__5_0}%
        {\'El\'ements de g\'eom\'etrie alg\'ebrique. III. \'Etude
          cohomologique des faisceaux coh\'erents, Seconde partie}},
      journal={Inst. Hautes \'Etudes Sci. Publ. Math.},
      number={17},
      date={1963},
      pages={5--91},
    }
    
    \bib{EL}{article}{
      author={Ein, Lawrence},
      author={Lazarsfeld, Robert},
      title={\href{http://dx.doi.org/10.1007/s10240-015-0072-2}%
        {The gonality conjecture on syzygies of algebraic curves of large
          degree}},
      journal={Publ. Math. Inst. Hautes \'Etudes Sci.},
      volume={122},
      date={2015},
      pages={301--313},
      issn={0073-8301},
    }
    
    \bib{ein-laz-1993}{article}{
      author={Ein, Lawrence},
      author={Lazarsfeld, Robert},
      title={\href{http://dx.doi.org/10.1007/BF01231279}%
        {Syzygies and Koszul cohomology of smooth projective varieties of
          arbitrary dimension}},
      journal={Invent. Math.},
      volume={111},
      date={1993},
      number={1},
      pages={51--67},
    }
    
    \bib{eisenbud-book}{book}{
      author={Eisenbud, David},
      title={\href{http://dx.doi.org/10.1007/978-1-4612-5350-1}%
        {Commutative algebra with a view toward algebraic geometry}},
      series={Graduate Texts in Mathematics~150},
      publisher={Springer-Verlag, New York},
      date={1995},
      pages={xvi+785},
    }
    
    \bib{eisenbud-erman-schreyer-tate-products}{article}{
      author={Eisenbud, David},
      author={Erman, Daniel},
      author={Schreyer, Frank-Olaf},
      title={\href{http://dx.doi.org/10.1007/s40306-015-0126-z}%
        {Tate resolutions for products of projective spaces}},
      journal={Acta Math. Vietnam.},
      volume={40},
      date={2015},
      number={1},
      pages={5--36},
    }

    \bib{tateOnProducts}{misc}{
      author={Eisenbud, David},
      author={Erman, Daniel},
      author={Schreyer, Frank-Olaf},
      author={Stillman, Michael~E.},
      title={TateOnProducts},
      date={2018},
      publisher={a \emph{Macaulay2} package available at
        \url{http://www.math.uiuc.edu/Macaulay2/}},
    }
        
    \bib{EP}{article}{
      author={Eisenbud, David},
      author={Popescu, Sorin},
      title={\href{http://dx.doi.org/10.1007/s002220050315}%
        {Gale duality and free resolutions of ideals of points}},
      journal={Invent. Math.},
      volume={136},
      date={1999},
      number={2},
      pages={419--449},
      issn={0020-9910},
    }
    
    \bib{farkas}{article}{
      author={Farkas, Gavril},
      title={\href{http://dx.doi.org/10.1353/ajm.0.0053}%
        {Koszul divisors on moduli spaces of curves}},
      journal={Amer. J. Math.},
      volume={131},
      date={2009},
      number={3},
      pages={819--867},
      issn={0002-9327},
    }
    
    \bib{fujita-semipositive}{article}{
      author={Fujita, Takao},
      title={\href{http://dx.doi.org/10.1007/BFb0099977}%
        {Vanishing theorems for semipositive line bundles}},
      conference={
        title={Algebraic geometry},
        address={Tokyo/Kyoto},
        date={1982},
      },
      book={
        series={Lecture Notes in Math.~1016},
        publisher={Springer, Berlin},
      },
      date={1983},
      pages={519--528},
    }
    
    \bib{almost-ring-theory}{book}{
      author={Gabber, Ofer},
      author={Ramero, Lorenzo},
      title={\href{http://dx.doi.org/10.1007/b10047}%
        {Almost ring theory}},
      series={Lecture Notes in Math.~1800},
      publisher={Springer-Verlag, Berlin Heidelberg},
      date={2003},
      pages={vi+307},
    }

    \bib{GP}{article}{
      author={Gallego, Francisco J.},
      author={Purnaprajna, Bangere P.},
      title={\href{http://dx.doi.org/10.1515/crll.1999.506.145}%
        {Projective normality and syzygies of algebraic surfaces}},
      journal={J. Reine Angew. Math.},
      volume={506},
      date={1999},
      pages={145--180},
    }
    
    \bib{GGP}{article}{
      author={Geramita, Anthony V.},
      author={Gimigliano, Alessandro},
      author={Pitteloud, Yves},
      title={\href{http://dx.doi.org/10.1007/BF01446634}%
        {Graded Betti numbers of some embedded rational $n$-folds}},
      journal={Math. Ann.},
      volume={301},
      date={1995},
      number={2},
      pages={363--380},
    }

    \bib{green-laz-1986}{article}{
      author={Green, Mark},
      author={Lazarsfeld, Robert},
      title={\href{http://dx.doi.org/10.1007/BF01388754}%
        {On the projective normality of complete linear series on an algebraic
          curve}},
      journal={Invent. Math.},
      volume={83},
      date={1986},
      number={1},
      pages={73--90},
    }
    
    \bib{GV}{book}{
      author={Guardo, Elena},
      author={Van Tuyl, Adam},
      title={\href{http://dx.doi.org/10.1007/978-3-319-24166-1}%
        {Arithmetically Cohen-Macaulay sets of points in $\mathbb{P}^{1}
          \times \mathbb{P}^{1}$}},
      series={SpringerBriefs in Mathematics},
      publisher={Springer, Cham},
      date={2015},
      pages={viii+134},
    }
    
    \bib{Hartshorne}{book}{
      author={Hartshorne, Robin},
      title={\href{http://dx.doi.org/10.1007/978-1-4757-3849-0}%
        {Algebraic geometry}},
      series={Graduate Texts in Mathematics~52},
      publisher={Springer-Verlag, New York-Heidelberg},
      date={1977},
      pages={xvi+496},
    }
    
    \bib{hartshorne-deformation}{book}{
      author={Hartshorne, Robin},
      title={\href{http://dx.doi.org/10.1007/978-1-4419-1596-2}%
        {Deformation theory}},
      series={Graduate Texts in Mathematics~257},
      publisher={Springer, New York},
      date={2010},
      pages={viii+234},
    }
    
    \bib{phantom}{article}{
      author={Hochster, Melvin},
      author={Huneke, Craig},
      title={\href{http://dx.doi.org/10.1090/memo/0490}%
        {Phantom homology}},
      journal={Mem. Amer. Math. Soc.},
      volume={103},
      date={1993},
      number={490},
      pages={vi+91},
    }
    
    \bib{huybrechts}{book}{
      author={Huybrechts, Daniel},
      title={\href{http://dx.doi.org/10.1093/acprof:oso/9780199296866.001.0001}%
        {Fourier-Mukai transforms in algebraic geometry}},
      series={Oxford Mathematical Monographs},
      publisher={The Clarendon Press, Oxford University Press, Oxford},
      date={2006},
      pages={viii+307},
      isbn={978-0-19-929686-6},
      isbn={0-19-929686-3},
    }
    
    \bib{lazarsfeld}{book}{
      author={Lazarsfeld, Robert},
      title={\href{http://dx.doi.org/10.1007/978-3-642-18808-4}%
        {Positivity in algebraic geometry. I}},
      series={Series of Modern Surveys in Mathematics~48},
      publisher={Springer-Verlag, Berlin},
      date={2004},
      pages={xviii+387},
    }
    
    \bib{maclagan-smith}{article}{
      author={Maclagan, Diane},
      author={Smith, Gregory G.},
      title={\href{http://dx.doi.org/10.1515/crll.2004.040}%
        {Multigraded Castelnuovo-Mumford regularity}},
      journal={J. Reine Angew. Math.},
      volume={571},
      date={2004},
      pages={179--212},
    }

    \bib{M2}{misc}{
      label={M2},
      author={Grayson, Daniel~R.},
      author={Stillman, Michael~E.},
      title={Macaulay2, a software system for research
        in algebraic geometry},
      publisher = {available at \url{http://www.math.uiuc.edu/Macaulay2/}},
    }
    
    \bib{miro-roig}{article}{
      author={Mir{\'o}-Roig, Rosa M.},
      title={Nonobstructedness of Gorenstein subschemes of codimension $3$ in
        ${\bf P}^n$},
      journal={Beitr\"age Algebra Geom.},
      number={33},
      date={1992},
      pages={131--138},
    }
    
    \bib{piene-schlessinger}{article}{
      author={Piene, Ragni},
      author={Schlessinger, Michael},
      title={\href{http://dx.doi.org/10.2307/2374355}%
        {On the Hilbert scheme compactification of the space of twisted
          cubics}},
      journal={Amer. J. Math.},
      volume={107},
      date={1985},
      number={4},
      pages={761--774},
    }

    \bib{Roberts}{article}{
      author={Roberts, Paul},
      title={Le th\'eor\`eme d'intersection},
      journal={C. R. Acad. Sci. Paris S\'er. I Math.},
      volume={304},
      date={1987},
      number={7},
      pages={177--180},
    }
    
    \bib{schaps}{article}{
      author={Schaps, Mary},
      title={\href{http://dx.doi.org/10.2307/2373859}%
        {Deformations of Cohen--Macaulay schemes of codimension $2$ and
          non-singular deformations of space curves}},      
      journal={Amer. J. Math.},
      volume={99},
      date={1977},
      number={4},
      pages={669--685},
    }
    
    \bib{V}{article}{
      author={Voisin, Claire},
      title={\href{http://dx.doi.org/10.1007/s100970200042}%
        {Green's generic syzygy conjecture for curves of even genus lying on a
          $K3$ surface}},
      journal={J. Eur. Math. Soc. (JEMS)},
      volume={4},
      date={2002},
      number={4},
      pages={363--404},
      issn={1435-9855},
    }
    
  \end{biblist}
\end{bibdiv}

\raggedright
	
\end{document}